\DeclareMathOperator{\Ad}{Ad}
\DeclareMathOperator{\ad}{ad}
\DeclareMathOperator{\tr}{tr}
\DeclareMathOperator{\Ric}{Ric}
\newcommand{\fr}{\mathfrak}
\newcommand{\al}{\alpha}
\newcommand{\be}{\beta}
\newcommand{\bb}{\mathbb}
\DeclareMathOperator{\SO}{SO}
\DeclareMathOperator{\s}{S}
\DeclareMathOperator{\SU}{SU}
\DeclareMathOperator{\U}{U}
 \newtheorem{lemma} {Lemma} [section]
\newtheorem{theorem}[lemma]{Theorem} 
\newtheorem{prop} [lemma]{Proposition}
\newcommand{\thickhline}{%
    \noalign {\ifnum 0=`}\fi \hrule height 1pt
    \futurelet \reserved@a \@xhline
}
\newcolumntype{"}{@{\hskip\tabcolsep\vrule width 1pt\hskip\tabcolsep}}
\begin{document}
\title{Non naturally reductive  Einstein metrics on $\SU(N)$ via generalized flag manifolds}
\author{Andreas Arvanitoyeorgos (corresponding author), Yusuke Sakane and Marina Statha}
\address{University of Patras, Department of Mathematics, GR-26500 Rion, Greece AND\newline Hellenic Open University, Aristotelous 18, GR-26335 Patras, Greece}
\email{arvanito@math.upatras.gr}
 \address{Osaka University, Department of Pure and Applied Mathematics, Graduate School of Information Science and Technology, Suita, 
Osaka 565-0871, Japan}
 \email{sakane@math.sci.osaka-u.ac.jp}
\address{University of Thessaly, Department of Mathematics, GR-35100 Lamia, Greece} 
\email{marinastatha@uth.gr} 
\medskip

\begin{abstract}
  
We obtain new invariant Einstein metrics  on the compact Lie group
 $\SU(N)$  which are not naturally reductive.
 This is achieved by  using the generalized flag manifold $G/K=\SU(k_1+\cdots +k_p)/\s(\U(k_1)\times\cdots\times\U(k_p))$ and by   taking an appropriate choice of  orthogonal basis of the center of Lie subalgebra $\frak k$ for $K$, which poses
  certain symmetry conditions to the $\Ad(K)$-invariant metrics of  $\SU(N)$. We also study the isometry problem for the Einstein metrics found.

\medskip
\noindent 2020 {\it Mathematics Subject Classification.} Primary 53C25; Secondary 53C30, 13P10, 65H10, 68W30.

\medskip
\noindent {\it Keywords}:    {Homogeneous space, Einstein metric, isotropy representation, compact Lie group, naturally reductive metric,  special  unitary group, generalized flag manifold}
   \end{abstract}

\maketitle


\section{Introduction}
\markboth{Arvanitoyeorgos, Sakane and Statha}{New homogeneous Einstein metrics on $\SU(N)$}

A Riemannian manifold $(M, g)$ is called Einstein if it has constant Ricci curvature, i.e. $\Ric_{g}=\lambda\cdot g$ for some $\lambda\in\bb{R}$ (cf. \cite{Be}). 
General existence results are difficult to obtain, so one may impose some symmetry conditions to try to prove existence of Einstein metrics.
For the case of a Riemannian homogeneous space $M=G/K$, where $G$ is a Lie group and $H$ a closed subgroup of $G$, the aim is the prove existence of $G$-invariant 
Einstein metrics and, if possible, classify such metrics. We refer to  \cite{W1}, \cite{W2} and \cite{A} for various results about homogeneous Einstein metrics
 
 For  a 
compact Lie group the problem  focuses  to finding  all left-invariant Einstein metrics.
This is  more subtle, since the space of all left-invariant metrics on a Lie group up to isometry and scaling is quite difficult to study. 
It is well known that a compact and semisimple Lie group equipped with a bi-invariant metric is Einstein.
 In \cite{DZ}  J.E. D'Atri and W. Ziller found a large number of naturally reductive metrics on compact Lie groups (classical and exceptional) and
  they raised the question of existence of left-invariant Einstein metrics which are not naturally reductive.
  The aim of the present paper is to prove
existence of new non naturally reductive left-invariant Einstein metrics on the special unitary group $\SU(N)$, and extend some previous results
  and techniques. 

A method to  look for left-invariant Einstein metrics on a compact Lie group $G$ is to restrict the search into a  subset of corresponding scalar products on the Lie algebra 
$\fr{g}$ of $G$ as follows. 
  We consider a homogeneous space $G/K$ and the natural  submersion $G\to G/K$ with fiber $K$.
  We decompose the Lie subalgebra of $K$ as $\fr{k}=\fr{k}_0\oplus\fr{k}_1 \oplus\cdots \oplus\fr{k}_p$, where $\fr{k}_0$ is the Lie algebra of the center of $K$ and
  $\fr{k}_i$ are simple ideals.   We also assume that the  tangent space $\fr{m}\cong T_0(G/K)$ of $G/K$ decomposes into irreducible and non equivalent $\Ad(K)$-modules
  $\fr{m}=\fr{m}_1\oplus\cdots\oplus\fr{m}_s$.
  
    Then the tangent space of $G$ is decomposed 
  into a direct sum of irreducible $\Ad(K)$-modules 
  $\fr{g}=\fr{k}_0\oplus\fr{k}_1 \oplus\cdots \oplus\fr{k}_p\oplus\fr{m}_1\oplus\cdots\oplus\fr{m}_s$ 
  and we  restrict ourselves to \color{black}
   left-invariant metrics on $G$ which are determined by $\Ad(K)$-invariant scalar products on $\fr{g}$. 
Then calculations for the Ricci curvature become simpler, by taking into account  well known formula  for the Ricci curvature  for $G$-invariant metrics on a homogeneous space  
 whose isotropy representation decomposes into \color{black} a sum of non equivalent irreducible summands
(cf. \cite{PS}).

 Concerning previous results about non naturally reductive left-invariant Einstein metrics on the special unitary group, the first existence result   was obtained by K. Mori in 
  \cite{M}, who  proved existence of such metrics $\SU(4+n)$ ($n\ge 2$).  
  There, the Lie group  $\SU(4+n)$ was considered as a total space over  generalized flag manifolds  $G/K=\SU(4+n)/\s(\U(2)\times\U(2)\times \U(n))$ with three isotropy summands.
In \cite{ASS1} the authors obtained invariant Einstein metrics on the compact Lie group $\SU(2n)$
$(n>3)$ which are not naturally reductive, by considering
$\SU(2n)$ as total space over the generalized Wallach spaces $G/K=\SU(2n)/\U(n)$. 
Note that in that case the center of the Lie algebra of $K=\U(n)$ is one-dimensional, which makes description of Ricci tensor of invariant metrics accessible.

 In \cite{ASS2} the authors  studied  non naturally reductive left-invariant Einstein metrics on the special unitary groups $G = \SU(\ell +m+ n)$ by 
 using the
generalized flag manifolds $G/K = \SU(\ell + m + n)/\s(\U(\ell) \times\U(m)\times \U(n))$.
In this case  the center of the Lie algebra of $K=\s(\U(\ell) \times\U(m)\times \U(n))$ is two-dimensional which makes description of  Ricci tensor more complicated.
We proved that $\SU(4+n)$ $(n\ge 2)$ and also $\SU(5)$ admit a certain number of non naturally reductive left-invariant Einstein metrics.
Notice that for the Lie groups $\SU(3)$, $\SU(4)$ and $\SU(2)\times\SU(2)$, the number of left-invariant Einstein metrics  is still unknown.

For  non naturally reductive left-invariant Einstein metrics on other classical and exceptional Lie groups we refer to \cite{AMS},  \cite{ASS2}, \cite{ASS1}, \cite{ASS4}, \cite{CL}, \cite{CCD}.



   In the present paper we  generalize the above approaches and prove existence of new non naturally reductive left-invariant Einstein metrics on the special unitary group $\SU(N)$, $N=k_1+k_2+\cdots +k_p$  (where $k_i \geq 2$ ($i =1, \dots, p$)),  by using the generalized flag manifold 
 $G/K=\SU(k_1+\cdots +k_p)/\s(\U(k_1)\times\cdots\times\U(k_p))$.   
 
 Note that 
 the isotropy representation $\frak{m} =\sum_{1\leq r<s \leq p} \frak{m}_{r s}$  of $G/K$ does not contain equivalent irreducible summands as $\Ad(K)$-modules. 
 The Lie algebra  $\frak{su}(N)$ is decomposed into 
$$\frak{su}(N) =\frak{k}_0+ \sum_{1\leq i \leq p}\frak{su}(k_i)+ \sum_{1\leq r<s \leq p} \frak{m}_{r s}$$ as $\Ad(K)$-module, 
 where $\frak{k}_0$ is the center $\frak{k}$ ($\dim \frak{k}_0 =  p-1$).  Thus we see that left-invariant metrics $g$ are determined by  $\Ad(K)$-invariant inner products of the form
 \begin{equation*}
  \langle \  \ ,\ \ \rangle =  < \ \, \ \ >|_{\frak{k}_0}+ \sum_{1\leq i \leq p} x_i B|_{\frak{su}(k_i)}+ \sum_{1\leq r<s \leq p}x_{r s} B|_{\frak{m}_{r s}}
 \end{equation*}
 where $< \ \, \ \ >|_{\frak{k}_0}$ denotes an arbitrary inner product on the center $\frak{k}_0$.  


 We take $k_2=\cdots =k_p=k \ge 2$.
Then the main result  is the following:

\begin{theorem}\label{main}  Let $p\ge 3$. 

\noindent
 {\rm 1)} If $k_1 > k \ge 2$, then  the Lie group  $\SU(N)$ {\em ($N=k_1 +(p-1)k$)} admits at least two  $\Ad(\s(\U(k_1)\times \underbrace{ \U(k)\times \cdots \times \U(k)}_{p-1})$-invariant Einstein metrics, which are not naturally reductive. 

\noindent
{\rm 2)} If $k_1=k$, then the Lie group  $\SU(N)$ {\em ($N=p \,k$)} admits at least one $\Ad( \s( \underbrace{ \U(k)\times \cdots \times \U(k)}_{p})$-invariant Einstein metric, which is not naturally reductive. 
\end{theorem}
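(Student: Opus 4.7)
The strategy is to exploit the $S_{p-1}$-permutation symmetry of the $p-1$ equal $\U(k)$-blocks in $K$ in order to cut the space of $\Ad(K)$-invariant inner products on $\fr{su}(N)$ down to a small finite-dimensional family, and then to solve the Einstein system on that reduced family.

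First I would choose an adapted orthogonal basis of the centre $\fr{k}_0$: one vector singling out the block corresponding to $\U(k_1)$, together with vectors obtained by orthogonalising the trace generators of the remaining $p-1$ equal blocks against it. I would then restrict the search to metrics invariant under the obvious action of $S_{p-1}$ permuting the equal blocks. Under this restriction the free parameters collapse to: scalars $x_1$ on $\fr{su}(k_1)$ and $x_2$ common to all $\fr{su}(k_i)$ with $i\ge 2$; scalars $y$ common to all $\fr{m}_{1s}$ and $z$ common to all $\fr{m}_{rs}$ with $2\le r<s$; and two parameters $a,b$ describing the invariant inner product on $\fr{k}_0$ (with a still-simpler form in case (2), $k_1=k$, where the full $S_p$ applies).

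Next I would compute the Ricci curvature using the classical formula of \cite{PS}. Because the summands $\fr{k}_0,\fr{su}(k_i),\fr{m}_{rs}$ are pairwise inequivalent irreducible $\Ad(K)$-modules, the Ricci tensor is diagonal with respect to this decomposition, and its diagonal entries are expressed in terms of the structure constants $[ijk]$. Most of these constants can be read off from, or computed by analogy with, the generalised flag manifold cases treated in \cite{M, ASS2}; the $S_{p-1}$-symmetry forces many of them to coincide, so that the Einstein condition $\Ric = \lambda\,\langle\,,\,\rangle$ reduces (after clearing denominators) to a polynomial system in the variables $x_1,x_2,y,z,a,b,\lambda$.

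The \emph{hard part}, and the main obstacle, will be solving this polynomial system and extracting real positive solutions distinct from the bi-invariant metric $x_1=x_2=y=z$, $a=b$. I would normalise $x_1=1$ and use a Gr\"obner basis elimination to reduce the system to a univariate polynomial whose positive real roots correspond to candidate Einstein metrics. A careful root count should then yield at least two such solutions in case (1) (where $k_1 > k$ forces $x_1$ to play a genuinely different role from $x_2$) and at least one new solution in case (2) (where $k_1 = k$ and the symmetry is $S_p$). Finally, to establish non-natural-reductivity I would apply the D'Atri--Ziller criterion (cf.\ \cite{DZ} and the arguments of \cite{M, ASS1, ASS2}), verifying that our solutions violate the equalities among the parameters $x_i, y, z$ that would characterise a naturally reductive metric of $\SU(N)$ with respect to any intermediate closed subgroup containing $K$.
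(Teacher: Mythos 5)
Your high-level plan matches the paper's strategy: use the flag manifold $G/K=\SU(N)/\s(\U(k_1)\times\U(k)^{p-1})$, restrict to a symmetric class of $\Ad(K)$-invariant inner products, compute Ricci via \cite{PS}, and solve a polynomial system, then test natural reductivity via D'Atri--Ziller. But there are two places where your justification is wrong or too thin.

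First, the claim that ``because the summands $\fr{k}_0,\fr{su}(k_i),\fr{m}_{rs}$ are pairwise inequivalent irreducible $\Ad(K)$-modules, the Ricci tensor is diagonal with respect to this decomposition'' is false for $\fr{k}_0$: the centre is a $(p-1)$-dimensional \emph{trivial} $\Ad(K)$-module, hence not irreducible, and for a generic diagonal metric on $\fr{k}_0$ the off-diagonal entries $r(H_i,H_j)$ do \emph{not} vanish. The paper handles exactly this point: it chooses a specific nested orthogonal basis $\{H_1,\dots,H_{p-1}\}$ of $\fr{c}$ (equation (7)), computes $r(H_i,H_j)$ in Lemma 3.3, and proves in Lemma 3.4 that $r(H_i,H_j)=0$ for $i\neq j$ if and only if $x_{i,\,i+1}=\cdots=x_{i,\,p}$ for each $i$. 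Your $S_{p-1}$-symmetry heuristic does point at the right constraint ($x_{12}=\cdots=x_{1p}$ and all $x_{rs}$ equal for $2\le r<s$), but you still need an argument in the role of Lemma 3.4 that the induced Ricci form on $\fr{c}$ stays inside your two-parameter family; it is not automatic from irreducibility.

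Second, ``normalise, run Gr\"obner elimination, do a careful root count'' is precisely the hard part and you have not supplied it. In the paper the elimination is done by hand: solve (27)--(28) for $y_1,y_2$, substitute into (24)--(25) to get $F_1,F_2$, split into the cases $x_2=1$ (Case 1, naturally reductive) and $x_2\neq 1$ (Case 2), solve for $x_2$ and $x_1$, and finally reduce to a single degree-16 polynomial $F_3(x_{12})$. The existence of two positive roots for $k_1>k$ comes from sign checks: $F_3(0)>0$, $F_3(1)<0$, and $F_3(k_1kp)>0$ with positive leading coefficient. For $k_1=k$ one factors $F_3=k^2(x_{12}-1)G_3(x_{12})$ and checks $G_3(0)<0<G_3(1)$. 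Without these explicit evaluations (or some comparable degree bound and sign analysis) the ``root count'' is an assertion, not a proof. Likewise, ruling out natural reductivity requires the concrete characterisation of Proposition 5.2 (either $x_{12}=x_{23}$, or $y_2=x_2=x_{23}$), which is then checked against $x_{12}\neq 1=x_{23}$ and $x_2<1$; invoking D'Atri--Ziller abstractly does not by itself discharge this step.
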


\smallskip

\noindent
  For $p \geq 4$ these metrics are different from the ones obtained in \cite{ASS1} and \cite{ASS2}. 

\smallskip
 In addition, it is possible to show that, for $k_1 \geq 8 k p$, $k \geq 2$ and $ p \geq 3$,   the Lie group  $\SU(N)$  ($N=k_1 +(p-1)k$) admits at least four  $\Ad(\s(\U(k_1)\times \underbrace{ \U(k)\times \cdots \times \U(k)}_{p-1})$-invariant Einstein metrics, which are not naturally reductive. 
 We only give a  sketch of its proof in  Remark 1 of Section 4.

 At this point we would like to comment on a seemingly similarity of Theorem 1.1 to Theorem 1.1 in our recent work \cite{ASS4}, where we studied existence of left-invariant
 Einstein metrics on $\SO(n)$, which are not naturally reductive.
 There, we used a real flag manifold $G/K$ in the decomposition 
 $\fr{g}=\fr{k}_1 \oplus\cdots \oplus\fr{k}_p\oplus\fr{m}_1\oplus\cdots\oplus\fr{m}_s$ of the Lie algebra of $\SO(n)$.   
 The  Lie algebra of $K$ for  a real flag manifold $G/K$ has no center, whereas in the present corresponding decomposition for $\SU(N)$,  we use a complex flag manifold
 $G/K$.  Here, the dimension of the center of the Lie algebra of $K$ is   $p -1$ ($ p \geq 3$).  This makes the description of Ricci tensor for left-invariant metrics on $G$ corresponding to $\Ad(K)$-invariant scalar products on $\fr{g}$ more complicated for $\SU(N)$, that is, Ricci tensor may have non diagonal part. 
 This   requires some non trivial study  of the action of the center of  $K$ (cf. Lemma 3.4, Lemma 3.6 and  Proposition 3.8).
 
 Another point that one may observe, is that in  part 1) of both  theorems we prove existence of at least two invariant Einstein metrics.
 This is because in both cases the Einstein equation reduces to a polynomial parametric system of equations, and the two Einstein metrics are obtained as positive solutions of a polynomial equations of degree 8 in case for  $\SO(n)$, but a polynomial equations of degree 16 in case for  $\SU(n)$.  By making appropriate choices of the parameters of the system, we are able to prove existence of more solutions.

\smallskip
\noindent
The paper is organized as follows:  	In Section 2 we recall a general expression for the Ricci tensor for  diagonal invariant metrics  on homogeneous spaces.  In Section 3 we consider a special decomposition of the Lie algebra $\fr{g}=\fr{su}(N)$ of the special unitary group $G=\SU(N)=\SU(k_1+\cdots +k_p)$ by using the flag manifold 
$G/K=\SU(N)/\s(\U(k_1)\times\cdots\times\U(k_p))$ (cf. (\ref{decom_su(n)})).  Then we make an appropriate choice of  orthogonal basis of the center of the Lie subalgebra $\frak k$ for $K$ given in (\ref{basis}). 
Associated to the decomposition (\ref{decom_su(n)}) of $\fr{g}=\fr{su}(N)$, we define a class of left-invariant metrics on $\SU(N)$, determined by $\Ad(K)$-invariant scalar products
on $\fr{g}$, given by expression (\ref{metric_g}).  These metrics are diagonal. 

We show that,  to obtain an Einstein metric on $\SU(N)$ that is,  
off-diagonal part of Ricci tensor to be zero,
  certain symmetry conditions are needed for the $\Ad(K)$-invariant metrics on  $\SU(N)$ (cf. Lemma \ref{consditions_r(H_i, H_j)=0}).
  
 Next, we take $k_1=k_2=\cdots =k_p=k\ge 2$ and consider a special case of the metric (\ref{metric_g})
on the Lie group $G=\SU(k_1+(p-1)k)$ (cf. (\ref{metric_g111})).  This metric depends on positive
variables $y_1, y_2, x_1, x_2, x_{12}, x_{23}$.
 Then the Ricci components $rr_1, rr_2, r_1, r_2, r_{12}, r_{23}$ of this metric are computed in Proposition \ref{ric_component}. 

In Section 4 we find left-invariant Einstein metrics on $\SU(k_1+(p-1)k)$ by making an extensive analysis of the parametric system of equations
 $\{r_{12} - r_{23} = 0,\  r_{23} - r_2 = 0, \  r_1 - r_2 = 0, \
 r_2 - rr_1 = 0,  rr_1 - rr_2= 0\}$.
 The approach is the following: We normalize the equations by taking $x_{23}=1$ and consider two cases.
 In Case 1 we assume that $x_2=1$ and we obtain either bi-invariant Einstein metrics or precisely two left-invariant Einstein metrics.
 In Case 2 we prove existence of at least two left-invariant Einstein metrics.

  In Section 5 we provide a necessary and sufficient   condition so that these left-invariant metrics (\ref{metric_g111})  are naturally reductive (cf. Proposition \ref{reductive}). Then we use this, to determine which of the Einstein metrics on  $G=\SU(N)=\SU(k_1 +(p-1)k)$ found in Section 4, are not naturally reductive.
  Finally, in Section 6 we consider  the isometry problem for these  Einstein metrics by comparing Einstein constants.

\section{The Ricci tensor for reductive homogeneous spaces}
 We recall an expression for the Ricci tensor for a $G$-invariant Riemannian
metric on a reductive homogeneous space whose isotropy representation
is decomposed into a sum of non equivalent irreducible summands.

Let $G$ be a compact semisimple Lie group, $K$ a connected closed subgroup of $G$  and  
let  $\frak g$ and $\frak k$  be  the corresponding Lie algebras. 
The Killing form $B$ of $\frak g$ is negative definite, so we can define an $\mbox{Ad}(G)$-invariant inner product $-B$ on 
  $\frak g$. 
Let $\frak g$ = $\frak k \oplus
\frak m$ be a reductive decomposition of $\frak g$ with respect to $-B$ so that $\big[\,\frak k,\, \frak m\,\big] \subset \frak m$ and
$\frak m\cong T_o(G/K)$.  
 We decompose  $ {\frak m} $ into irreducible non equivalent $\mbox{Ad}(K)$-modules as follows: \ 
\begin{equation}\label{iso}
{\frak m} = {\frak m}_1 \oplus \cdots \oplus {\frak m}_q.
\end{equation} 
Then for the decomposition (\ref{iso}) any $G$-invariant metric on $G/K$ can be expressed as  
\begin{eqnarray}
 \langle  \,\,\, , \,\,\, \rangle  =  
x_1   (-B)|_{\mbox{\footnotesize$ \frak m$}_1} + \cdots + 
 x_q   (-B)|_{\mbox{\footnotesize$ \frak m$}_q},  \label{eq2}
\end{eqnarray}
for positive real numbers $(x_1, \dots, x_q)\in \mathbb{R}^{q}_{+}$. 
If, for the decomposition (\ref{iso}) of $\frak m$,  the Ricci tensor $r$ of a $G$-invariant Riemannian metric $ \langle  \,\,\, , \,\,\, \rangle$ on $G/K$  satisfies $r({\frak m}_i, {\frak m}_j) = (0)$ for $ i \neq j$, then the Ricci tensor $r$ is of the same form as (\ref{eq2}), that is 
 \[
 r=z_1 (-B)|_{\mbox{\footnotesize$ \frak m$}_1}  + \cdots + z_{q} (-B)|_{\mbox{\footnotesize$ \frak m$}_q} ,
 \]
 for some real numbers $z_1, \ldots, z_q$.

Let $\lbrace e_{\alpha} \rbrace$ be a $(-B)$-orthonormal basis 
adapted to the decomposition of $\frak m$,    i.e. 
$e_{\alpha} \in {\frak m}_i$ for some $i$, and
$\alpha < \beta$ if $i<j$. 
We put ${A^\gamma_{\alpha
\beta}}= -B \big(\big[e_{\alpha},e_{\beta}\big],e_{\gamma}\big)$ so that
$\big[e_{\alpha},e_{\beta}\big]
= \displaystyle{\sum_{\gamma}
A^\gamma_{\alpha \beta} e_{\gamma}}$ and set 
$\displaystyle{k \brack {ij}}=\sum (A^\gamma_{\alpha \beta})^2$, where the sum is
taken over all indices $\alpha, \beta, \gamma$ with $e_\alpha \in
{\frak m}_i,\ e_\beta \in {\frak m}_j,\ e_\gamma \in {\frak m}_k$ (cf.\,\cite{WZ}).  
Then the positive numbers $\displaystyle{k \brack {ij}}$ are independent of the 
$(-B)$-orthonormal bases chosen for ${\frak m}_i, {\frak m}_j, {\frak m}_k$,
and 
$\displaystyle{k \brack {ij}}\ =\ \displaystyle{k \brack {ji}}\ =\ \displaystyle{j \brack {ki}}.  
 \label{eq3}
$

Let $ d_k= \dim{\frak m}_{k}$. Then we have the following:

\begin{lemma}\label{ric2}{\em(\cite{PS})}
The components ${ r}_{1}, \dots, {r}_{q}$ 
of the Ricci tensor ${r}$ of the metric $ \langle  \,\,\, , \,\,\, \rangle $ of the
form {\em (\ref{eq2})} on $G/K$ are given by 
\begin{equation}
{r}_k = \frac{1}{2x_k}+\frac{1}{4d_k}\sum_{j,i}
\frac{x_k}{x_j x_i} {k \brack {ji}}
-\frac{1}{2d_k}\sum_{j,i}\frac{x_j}{x_k x_i} {j \brack {ki}}
 \quad (k= 1, \dots, q),    \label{eq51}
\end{equation}
where the sum is taken over $i, j =1, \dots , q$.
\end{lemma}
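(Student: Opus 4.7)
The plan is to deduce the diagonal formula (\ref{eq51}) from the general expression for the Ricci curvature of a $G$-invariant Riemannian metric on a reductive homogeneous space. The first observation is that the paragraph preceding the lemma already reduces the problem to identifying scalars: indeed, since the $\mathrm{Ad}(K)$-summands $\frak{m}_i$ are irreducible and pairwise inequivalent, Schur's lemma forces $r|_{\frak{m}_k}$ to be a scalar multiple of $(-B)|_{\frak{m}_k}$, so that $r = \sum_k r_k (-B)|_{\frak{m}_k}$, and only the scalars $r_k$ need to be computed.

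For the actual computation, I would start from the standard formula (Besse, Ch.\ 7, or Sagle) for the Ricci tensor of a $G$-invariant metric $\langle\cdot,\cdot\rangle$ at the origin $o \in G/K$, namely
\begin{equation*}
r(X,X) \;=\; -\tfrac12 \sum_\alpha \langle [X,E_\alpha]_{\frak{m}}, [X,E_\alpha]_{\frak{m}}\rangle \;+\; \tfrac14 \sum_{\alpha,\beta} \langle [E_\alpha,E_\beta]_{\frak{m}}, X\rangle^2 \;-\; \tfrac12 B(X,X),
\end{equation*}
for $X \in \frak{m}$ and $\{E_\alpha\}$ a $\langle\cdot,\cdot\rangle$-orthonormal basis of $\frak{m}$. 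Rescaling the $(-B)$-orthonormal basis $\{e_\alpha\}$ from the statement, by setting $E_\alpha := e_\alpha/\sqrt{x_i}$ whenever $e_\alpha \in \frak{m}_i$, produces such a basis. Then I would take $X = e_\gamma$ with $e_\gamma \in \frak{m}_k$, expand the brackets through the constants $A^\delta_{\alpha\beta}$, and average over a $(-B)$-orthonormal basis of $\frak{m}_k$ (dividing by $d_k$) in order to recover $r_k$. The squared structure constants then group according to which $\frak{m}_i$, $\frak{m}_j$, $\frak{m}_k$ the basis vectors lie in, producing precisely the symbols $\displaystyle{k \brack ji}$, while the rescalings $E_\alpha = e_\alpha/\sqrt{x_i}$ inject the weights $x_k/(x_j x_i)$ into the second term and $x_j/(x_k x_i)$ into the third. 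The contribution $-\tfrac12 B(X,X)$ gives the leading $1/(2x_k)$ after normalization.

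The main obstacle is the careful bookkeeping when transferring from the $\langle\cdot,\cdot\rangle$-orthonormal basis $\{E_\alpha\}$ back to the $(-B)$-orthonormal basis $\{e_\alpha\}$: each bracket $[E_\alpha, E_\beta]$ carries a factor $1/\sqrt{x_i x_j}$, each inner product $\langle\cdot,\cdot\rangle$ against a vector of $\frak{m}_k$ introduces a factor $x_k$ relative to $(-B)$, and one must exploit the symmetry $\displaystyle{k \brack ji}=\displaystyle{j \brack ki}$ (noted just before the statement) to rewrite the sum $-\tfrac12 \sum_\alpha \|[X,E_\alpha]_{\frak{m}}\|^2$ into the third-term form $\tfrac{x_j}{x_k x_i}$ shown in (\ref{eq51}). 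One further point that must be addressed, but is routine, is that the $\frak{k}$-components of the brackets contribute only through the global Killing-form term $-\tfrac12 B(X,X)$; this uses the $\mathrm{ad}(K)$-invariance of $-B$ together with the $(-B)$-orthogonality of the reductive decomposition. This index accounting, rather than any geometric input, is where errors are most likely, and is the step I would execute most deliberately.
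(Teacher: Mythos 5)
The paper cites this lemma from Park--Sakane \cite{PS} without reproducing a proof, so there is no internal argument to compare against; your reconstruction via the standard Ricci formula for reductive homogeneous spaces, the basis rescaling $E_\alpha = e_\alpha/\sqrt{x_i}$, and the reassembly of squared structure constants into the triple symbols is precisely the expected derivation and is correct. One loose end worth tightening: the formula you quote silently drops the term $-\langle [Z,X]_{\frak m},X\rangle$ with $Z=\sum_\alpha U(E_\alpha,E_\alpha)$ that appears in the general expression for the Ricci tensor of a reductive homogeneous space (Besse 7.38); you should note that $Z=0$ here, since for a metric diagonal with respect to a $(-B)$-orthogonal decomposition one has $\langle U(E_\alpha,E_\alpha),W\rangle=\langle [W,E_\alpha]_{\frak m},E_\alpha\rangle=x_i\,(-B)\bigl([W,E_\alpha],E_\alpha\bigr)$, which vanishes by the $\mathrm{ad}$-invariance of $B$. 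With that remark added, the index bookkeeping you outline does produce each of the three terms of the stated formula.
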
 
If, for the decomposition (\ref{iso}) of $\frak m$,  the Ricci tensor $r$ of a $G$-invariant Riemannian metric $ \langle  \,\,\, , \,\,\, \rangle$ on $G/K$  satisfies $r({\frak m}_i, {\frak m}_j) =0$ for $ i \neq j$, 
then, by Lemma \ref{ric2}, it follows that $G$-invariant Einstein metrics on $M=G/K$ are exactly the positive real solutions $(x_1, \ldots, x_q)\in\mathbb{R}^{q}_{+}$  of the system of equations $\{r_1=\lambda, \, r_2=\lambda, \, \ldots, \, r_{q}=\lambda\}$, where $\lambda\in \mathbb{R}_{+}$ is the Einstein constant.

\section{A class of left-invariant metrics on $\SU(N)=\SU(k_1+k_2+\dots +k_p)$  and Ricci tensor}

\subsection{A decomposition of the tangent space}

We will describe a decomposition of the tangent space of the Lie group $\SU (N)$.
 We consider the closed subgroup $K = \s(\U(k_1)\times\U(k_2)\times\cdots\times\U(k_p))$ of $G =\SU(N) =  \SU(k_1 + \cdots+ k_p)$
 with $k_1, k_2, \dots,  k_p \ge 1$,
 where the embedding of $K$ in $G$ is diagonal.
The homogeneous space $G/K$ obtained is known as generalized flag manifold.  
  Then the tangent space $\fr{su}(k_1 + \cdots+ k_p)$ of the special unitary group $G = \SU(k_1 + \cdots + k_p)$ can be written as a direct sum of  $\Ad(K)$-invariant modules as 
  \begin{equation}\label{dia}
\fr{su}(k_1 + \cdots + k_p) = \fr{k}\oplus\fr{m}, 
\end{equation}
where   $\fr{m}$ corresponds to the tangent space of $G/K$. 

   Then the tangent space $ \fr{m}$ of $G/K$ 
   is given by  $\fr{k}^{\perp} $ in $ \fr{g} = \fr{su}(k_1+ \cdots+k_p)$ with respect to the $\mbox{Ad}(G)$-invariant inner product $-B$. If we denote by $M(q, r)$ the set of all $q \times r$ complex matrices, then we see that 
  $ \fr{m}$ is given by

\begin{equation*}
 \fr{m}=\left\{\left( 
 \begin{array}{cccccc}
  0 & A_{1\,2} &  A_{1\,3}  & \cdots & A_{1\, p-1}& A_{1\,p} \\ 
  -{\bar A_{1\,2}}^t & 0 &  A_{2\,3}  & \cdots & A_{2\, p-1}& A_{2\, p}\\
   -{\bar A_{1\,3}}^t & -{\bar A_{2\,3}}^t &  0  & \cdots & A_{3\, p-1}& A_{3 \,p}\\
    \vdots & \vdots &  \vdots  & \ddots & \vdots & \vdots\\
    \vspace{2pt}
      -{\bar A_{1\, p-1}}^t & -{\bar A_{2\, p-1}}^t & -{\bar A_{3 \,p-1}}^t & \cdots  & 0 & A_{ p-1\, p}
\\ 
   -{\bar A_{1\,p}}^t & -{\bar A_{2\, p}}^t &   -{\bar A_{3\, p}}^t  &\cdots  & -{\bar A_{p-1\,  p}}^t & 0
  \end{array}
  \right):
  \begin{array}{c} A_{r  s}\in M(k_r, k_s) \\
  1 \leq r < s \leq p
  \end{array}
  \right\}.
 \end{equation*}

We set, for $r < s$, 
\begin{equation}\label{m(rs)}
 \fr{m}_{rs}=\left\{\left( 
 \begin{array}{cccc}
  0 & \cdots &    & 0 \\ 
 \vdots & \ddots & A_{rs} & \vdots\\
   & -{\bar A_{rs}}^t &  & \\
  0 & \cdots &  & 0 
  \end{array}
  \right): A_{rs}\in M(k_r, k_s)
  \right\}.
 \end{equation} 
 We take a Weyl basis  $\{ X_{\al \be}^{(r s)}, Y_{\al \be}^{(r s)} ; 1 \leq \al \leq k_r,  1 \leq \be \leq k_s \}$ for $ \fr{m}_{rs}$.  That is, let $E_{\al \be}^{(r s)} \in M(k_r, k_s)$ be the matrix with  1 in the $\al$-th row and $\be$-th column. We denote by $X_{\al \be}^{(r s)}$ the element in $ \fr{m}_{rs}$ with  $A_{rs} = E_{\al \be}^{(r s)}$ in (\ref{m(rs)}),  and by $Y_{\al \be}^{(r s)}$ the element in $ \fr{m}_{rs}$ with  $A_{rs} = \sqrt{-1} E_{\al \be}^{(r s)}$  in (\ref{m(rs)}). 
 

 Note that $\fr{k}= \fr{s}(\fr{u}(k_1)\oplus\cdots\oplus\fr{u}(k_p))= \fr{c}\oplus\fr{su}(k_1)\oplus\cdots\oplus\fr{su}(k_p)$ where $\fr{c}$ is the $(p-1)$-dimensional center of $\fr{k}$.   
 For $i=1, \dots , p$ we also set $\fr{m}_i = \fr{su}(k_i)$. 
 Then the decomposition (\ref{dia}) of the tangent space of the special unitary group $G= \SU(k_1+\cdots+k_p)$ takes the form
\begin{equation}\label{decom_su(n)}
 \fr{su}(k_1+\cdots +k_p) =\fr{c}\oplus \fr{m}_1 \oplus \fr{m}_2 \oplus \cdots \oplus\fr{m}_p \oplus  \bigoplus_{1\le r< s\le p}\fr{m}_{r s}. 
\end{equation}

Now we consider an orthogonal basis $\{H_1, H_2, \dots, H_{p-1} \}$ of the center $\fr{c}$ with respect to inner product $-B$,   
which will be convenient for our study.  We set $K_j = k_1+k_2+\cdots+ k_j$ for $j =1, \ldots, p-1$. 

Let 
 \begin{equation*}
 H_{1}=\left( 
 \begin{array}{cc}
\displaystyle\frac{\sqrt{-1}}{k_1} \text{\large{I}}_{k_1} &  \text{{\large{0}}} \\ 
  \text{{\large{0}}} & \displaystyle - \frac{\sqrt{-1}}{N-K_1} \text{\large{I}}_{N- K_1}  
  \end{array}
  \right),  \quad 
 H_{2}=\left( 
 \begin{array}{ccc}
 \displaystyle  \text{{\large{0}}}_{k_1}&   \text{{\large{0}}} &  \text{{\large{0}}} \\ 
\displaystyle   \text{{\large{0}}}& \displaystyle   \frac{\sqrt{-1}}{k_2} \text{\large{I}}_{k_2} &  \text{{\large{0}}} \\ 
  \text{{\large{0}}} &  \displaystyle   \text{{\large{0}}} &\displaystyle - \frac{\sqrt{-1}}{N-K_2} \text{\large{I}}_{N- K_2}  
  \end{array}
  \right),
 \end{equation*} 
  \begin{equation}\label{basis}
  \dots, \quad 
   H_{j}=\left( 
 \begin{array}{ccc}
 \displaystyle  \text{{\large{0}}}_{K_{j-1}}&   \text{{\large{0}}} &  \text{{\large{0}}} \\ 
\displaystyle   \text{{\large{0}}}& \displaystyle   \frac{\sqrt{-1}}{k_j} \text{\large{I}}_{k_j} &  \text{{\large{0}}} \\ 
  \text{{\large{0}}} &  \displaystyle   \text{{\large{0}}} &\displaystyle - \frac{\sqrt{-1}}{N-K_j} \text{\large{I}}_{N- K_j}  
  \end{array}
  \right), \quad \dots\quad , 
 \end{equation} 
  \begin{equation*}
 H_{p-2}=\left( 
 \begin{array}{ccc}
 \displaystyle  \text{{\large{0}}}&   \text{{\large{0}}} &  \text{{\large{0}}} \\ 
\displaystyle   \text{{\large{0}}}& \displaystyle   \frac{\sqrt{-1}}{k_{p-2}} \text{\large{I}}_{k_{p-2}} &  \text{{\large{0}}} \\ 
  \text{{\large{0}}} &  \displaystyle   \text{{\large{0}}} &\displaystyle - \frac{\sqrt{-1}}{N-K_{p-2}} \text{\large{I}}_{N- K_{p-2}}  
  \end{array}
  \right),  
\end{equation*} 
 \begin{equation*} 
  H_{p-1}=
  \left( 
 \begin{array}{ccc}
 \displaystyle  \text{{\large{0}}}_{K_{p-2}}&   \text{{\large{0}}} &  \text{{\large{0}}} \\ 
\displaystyle   \text{{\large{0}}}& \displaystyle   \frac{\sqrt{-1}}{k_{p-1}} \text{\large{I}}_{k_{p-1}} &  \text{{\large{0}}} \\ 
  \text{{\large{0}}} &  \displaystyle   \text{{\large{0}}} &\displaystyle - \frac{\sqrt{-1}}{N-K_{p-1}} \text{\large{I}}_{N- K_{p-1}}  
  \end{array}
  \right)
=  \left( 
 \begin{array}{ccc}
 \displaystyle  \text{{\large{0}}}_{K_{p-2}}&   \text{{\large{0}}} &  \text{{\large{0}}} \\ 
\displaystyle   \text{{\large{0}}}& \displaystyle   \frac{\sqrt{-1}}{k_{p-1}} \text{\large{I}}_{k_{p-1}} &  \text{{\large{0}}} \\ 
  \text{{\large{0}}} &  \displaystyle   \text{{\large{0}}} &\displaystyle - \frac{\sqrt{-1}}{k_{p}} \text{\large{I}}_{k_p}  
  \end{array}
  \right), 
 \end{equation*} 
  where $\text{\large{I}}_k$ and $\text{{\large{0}}}_k$ denote the  identity and zero matrices of size $k$ respectively.

Using the orthogonal basis $\{H_1, H_2, \dots, H_{p-1} \}$ of  the center $\fr{c}$, the action $\ad(\fr{c})$  on $ \fr{m} =\bigoplus_{1\le r< s \le p}\fr{m}_{r s}$ is given as  follows: 
%
%
%
\begin{lemma}\label{action_center} 
Let $\{X_{\al \be}^{(r s)},  Y_{\al \be}^{(r s)} : 1\leq \al \leq k_r, 1 \leq \be \leq k_s \}$ be the Weyl basis of 
$ \fr{m}_{r s}$.  For a fixed $j$ $(j = 1, \dots, p-1)$, we have: 
\begin{center}
\begin{tabular}{ll} \vspace{5pt}
 $( 1 )$ &\  For $1\leq r  \leq j -1$,  \\ \vspace{5pt}
& $[ H_j, \fr{m}_{r s}] =( 0 )$      for $  s \leq j -1$,   \\ \vspace{5pt}
& $  [ H_j, X_{\al \be}^{(r j)}] =  -\frac{1}{k_j}Y_{\al \be}^{(r j)}$, 
 $  [ H_j, Y_{\al \be}^{(r j)}] =  \frac{1}{k_j}X_{\al \be}^{(r j)}$, 
\\  \vspace{5pt}
& $  [ H_j, X_{\al \be}^{(r s)}] = \frac{1}{N - K_j} Y_{\al \be}^{(r s)}$, 
 $[ H_j, Y_{\al \be}^{(r s)}] = -\frac{1}{N - K_j} X_{\al \be}^{(r s)}$     for  
 $j+1\leq  s \leq  p$.  
 \\ \vspace{5pt}
$( 2 )$ &  For $ r =j$,  \\ \vspace{5pt}
& $ [ H_j, X_{\al \be}^{(j s)}] = \left( \frac{1}{k_j}+\frac{1}{N - K_j}\right)Y_{\al \be}^{(j s)}$,  
 $[ H_j, Y_{\al \be}^{(j s)}] =  -\left( \frac{1}{k_j}+\frac{1}{N - K_j}\right)X_{\al \be}^{(j s)}$     
  for $j+1 < s \leq p$,  
  \\ \vspace{5pt}
 $( 3 )$  &  For $j+1\leq  r < s $, 
\\ \vspace{5pt}
& $[ H_j, \fr{m}_{r s}] =( 0 )$.         
\end{tabular}
\end{center}  
\end{lemma}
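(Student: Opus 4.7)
\bigskip

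The plan is to exploit the fact that every $H_j$ is block-scalar, i.e.\ a scalar multiple of the identity on each of the $p$ diagonal blocks of sizes $k_1,\dots,k_p$, so that its adjoint action on $\fr{m}_{r s}$ reduces to multiplication by a single scalar. Writing $P_l$ for the $N\times N$ diagonal projector which equals $I_{k_l}$ on the $l$-th block and $0$ elsewhere, formula (\ref{basis}) reads
\[
H_j \;=\; \sum_{l=1}^{p} h_l^{(j)}\, P_l,
\qquad
h_l^{(j)} \;=\; \begin{cases} 0, & l\le j-1,\\[2pt] \sqrt{-1}/k_j, & l=j,\\[2pt] -\sqrt{-1}/(N-K_j), & l\ge j+1.\end{cases}
\]

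The main step is then a single blockwise multiplication. For $Z\in\fr m_{rs}$ with $(r,s)$-block $A\in M(k_r,k_s)$ (and hence $(s,r)$-block $-\bar A^{\,t}$), multiplying out shows that $[H_j,Z]$ still lies in $\fr m_{rs}$ with $(r,s)$-block $(h_r^{(j)}-h_s^{(j)})\,A$; the $(s,r)$-block comes out correctly as $-\overline{(h_r^{(j)}-h_s^{(j)})A}^{\,t}$ because each $h_l^{(j)}$ is purely imaginary. Writing $h_r^{(j)}-h_s^{(j)}=\sqrt{-1}\,c_{rs}^{(j)}$ with $c_{rs}^{(j)}\in\bb R$, and recalling that $X_{\al\be}^{(rs)}$ and $Y_{\al\be}^{(rs)}$ have $(r,s)$-blocks $E_{\al\be}^{(rs)}$ and $\sqrt{-1}E_{\al\be}^{(rs)}$ respectively, I obtain the uniform formulas
\[
[H_j, X_{\al\be}^{(rs)}] \;=\; c_{rs}^{(j)}\, Y_{\al\be}^{(rs)},
\qquad
[H_j, Y_{\al\be}^{(rs)}] \;=\; -c_{rs}^{(j)}\, X_{\al\be}^{(rs)}.
\]

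To conclude I only need to read off $c_{rs}^{(j)}$ from the table of $h_l^{(j)}$ in each of the nine sub-cases: cases $(1)$ with $s\le j-1$ and $(3)$ give $c_{rs}^{(j)}=0$; case $(1)$ with $s=j$, $r<j$ gives $c_{rs}^{(j)}=-1/k_j$; case $(1)$ with $j+1\le s\le p$, $r<j$ gives $c_{rs}^{(j)}=1/(N-K_j)$; and case $(2)$ with $r=j<s$ gives $c_{rs}^{(j)}=1/k_j+1/(N-K_j)$. These are exactly the scalars appearing in the statement. I do not anticipate any real obstacle; the only mild care needed is verifying the closure $[H_j,\fr m_{rs}]\subset\fr m_{rs}$, which is automatic from $h_l^{(j)}\in\sqrt{-1}\,\bb R$ as noted above.
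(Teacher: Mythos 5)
Your proof is correct: the reduction of $\ad(H_j)$ on $\fr m_{rs}$ to multiplication by the purely imaginary scalar $h_r^{(j)}-h_s^{(j)}$ is exactly the "straightforward computation" the paper alludes to, and your case-by-case reading of $c_{rs}^{(j)}$ reproduces all the stated coefficients (including the vanishing in cases (1) with $s\le j-1$ and (3)). The paper's own proof is just the one-line assertion "by a straightforward computation," so you have simply supplied the details of the same direct blockwise argument.
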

\begin{proof}  By a straightforward computation, we see our claim. 
\end{proof}

We  also see that the following relations hold:
\begin{lemma}\label{brackets}  
The submodules  $\fr{c}$, $\fr{m}_i$ and $\fr{m}_{r s}$ in the decomposition {\em (\ref{decom_su(n)})}  satisfy the following bracket relations:
\begin{center}
\begin{tabular}{lll}
$[ \fr{m}_i, \fr{m}_i]  \subset \fr{m}_i,$  &   
$[ \fr{m}_i, \fr{m}_{i s}]  \subset \fr{m}_{i s},$  
 & $[ \fr{m}_i, \fr{m}_{r i}]  \subset  \fr{m}_{r i},$ \\
$[ \fr{m}_{i j}, \fr{m}_{i s}]  \subset \fr{m}_{j s}$  for $j < s$,   &  $[ \fr{m}_{i j}, \fr{m}_{i s}]  \subset \fr{m}_{s j}$  for $s < j$, & $[ \fr{m}_{i j}, \fr{m}_{r i}]  \subset \fr{m}_{r j}$,    \\ 
$[\fr{m}_{i j}, \fr{m}_{j s}]  \subset   \fr{m}_{i s}$,  & $[\fr{m}_{i j}, \fr{m}_{r j}]  \subset   \fr{m}_{i r}$ for $i < r$, & $[\fr{m}_{i j}, \fr{m}_{r j}]  \subset   \fr{m}_{r i}$ for $r < i$, \\ 
$[ \fr{m}_{r s}, \fr{m}_{r s}] \subset  \fr{c} + \fr{m}_{r}+\fr{m}_{s}$,  &  $[\fr{c}, \fr{m}_{r s}] \subset \fr{m}_{r s}$ &  
\end{tabular}
\end{center} 
and the other bracket relations are zero.
\end{lemma}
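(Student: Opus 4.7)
The proof is a direct block-matrix calculation, so my plan is first to encode each summand of (\ref{decom_su(n)}) by its block-support pattern and then to read off every bracket in the table by block multiplication. Viewing elements of $\fr{su}(N)$ as $p\times p$ block matrices whose $(a,b)$-block lies in $M(k_a,k_b)$, I record three observations: $\fr{m}_i=\fr{su}(k_i)$ is supported only in the $(i,i)$-block and is traceless there; $\fr{m}_{rs}$ for $r<s$ is supported exactly in the $(r,s)$- and $(s,r)$-blocks by (\ref{m(rs)}); and each basis vector $H_j$ of $\fr{c}$ in (\ref{basis}) is block-diagonal with every diagonal block a pure-imaginary scalar multiple of $I_{k_a}$. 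The last row $[\fr{c},\fr{m}_{rs}]\subset\fr{m}_{rs}$ of the table is precisely Lemma \ref{action_center}, so it is already established.

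For the remaining rows the plan is to compute $[A,B]=AB-BA$ block by block. The governing rule is that multiplying a matrix supported on $(a,b)$ by one supported on $(c,d)$ can create nonzero blocks only at positions $(a,d)$ (when $b=c$) and $(c,b)$ (when $d=a$). From this every listed relation reads off mechanically: $[\fr{m}_i,\fr{m}_i]\subset\fr{m}_i$ is just the closure of $\fr{su}(k_i)$ under bracket; $[\fr{m}_i,\fr{m}_{is}]$ and $[\fr{m}_i,\fr{m}_{ri}]$ preserve their off-diagonal supports; $[\fr{m}_{ij},\fr{m}_{is}]$ with $j\ne s$ has support at $(j,s)$ and $(s,j)$, landing in $\fr{m}_{js}$ or $\fr{m}_{sj}$ according to whether $j<s$ or $s<j$; $[\fr{m}_{ij},\fr{m}_{js}]$ has support at $(i,s)$ and $(s,i)$, hence lies in $\fr{m}_{is}$; and $[\fr{m}_{ij},\fr{m}_{rj}]$ lands in $\fr{m}_{ir}$ or $\fr{m}_{ri}$ according to the order of $i$ and $r$. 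Every bracket not listed concerns a pair whose index supports fit together in neither of the ways $b=c$ nor $d=a$ (for example $[\fr{m}_{ij},\fr{m}_{rs}]$ with $\{i,j\}\cap\{r,s\}=\emptyset$, or $[\fr{m}_i,\fr{m}_j]$ for $i\ne j$), so the products vanish identically; moreover $[\fr{c},\fr{c}]=0$ since $\fr{c}$ consists of commuting diagonal matrices, and $[\fr{c},\fr{m}_i]=0$ because each $H_j$ restricts to the $(i,i)$-block as a scalar multiple of $I_{k_i}$.

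The only case requiring a small argument beyond block multiplication is $[\fr{m}_{rs},\fr{m}_{rs}]\subset\fr{c}\oplus\fr{m}_r\oplus\fr{m}_s$. For $A,B\in\fr{m}_{rs}$ the commutator $[A,B]$ has nonzero blocks only at $(r,r)$ and $(s,s)$. Splitting each such block as (scalar multiple of identity)$+$(traceless remainder), the traceless remainders lie in $\fr{su}(k_r)=\fr{m}_r$ and $\fr{su}(k_s)=\fr{m}_s$ respectively, while the two scalar parts together form a block-diagonal matrix supported only on the $(r,r)$- and $(s,s)$-blocks, each a pure-imaginary scalar multiple of identity, with total trace zero because $\tr[A,B]=0$. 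Any such matrix belongs to the span of $\{H_1,\dots,H_{p-1}\}$, i.e.\ to $\fr{c}$. Beyond this small bookkeeping I foresee no substantive obstacle; the whole lemma is, quite literally, about sliding blocks.
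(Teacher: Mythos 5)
Your proof is correct and is precisely the ``straightforward computation'' that the paper invokes without spelling out: the block-support bookkeeping, the rule that $(a,b)\cdot(c,d)$ contributes only at $(a,d)$ or $(c,b)$, and the trace/scalar decomposition for $[\fr{m}_{rs},\fr{m}_{rs}]$ are exactly what one needs. Nothing is missing and the approach matches the paper's (the paper simply leaves the verification to the reader).
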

\begin{proof}  By a straightforward computation, we see our claim. 
\end{proof}

Note that the irreducible $\Ad(K)$-submodules $\fr{m}_{r s}$ are mutually non equivalent, so any
 $G$-invariant metric on $G/K$ is determined by an $\Ad(K)$-invariant scalar product
$
\sum_{  r<s }x_{r s}  B |_{ \fr{m}_{r s}}
$, where $x_{r s}$ are positive real numbers. 

We denote by $\fr{c}_j$ the 1-dimensional space ${\bb R} H_j$  for $j=1, \dots, p-1$. Then we have $\fr{c} = \sum_{j=1}^{p-1} \fr{c}_j$.

\subsection{Invariant metrics and Ricci tensor}
We  define  $\Ad(K)$-invariant scalar products $g$ on $ \fr{g} = \fr{su}(k_1+ \cdots+k_p)$ by
 \begin{equation}\label{metric_g} 
 g =  \sum_{j=1}^{p-1}y_j  B |_{ \fr{c}_j }+  \sum_{\ell=1}^{p} x_\ell B |_{\fr{m}_\ell}+ \sum_{1\le r<s \le p}x_{r s} B |_{ \fr{m}_{r s}}. 
 \end{equation}
 
 We compute the components of the Ricci tensor $r$ of the metric (\ref{metric_g}). 
 
\begin{lemma}\label{r(H_i, H_j)} 
Let $\{{\widetilde{X}_{\al \be}^{(r s)}},  \widetilde{Y}_{\al \be}^{(r s)} : 1\leq \al \leq k_r, 1 \leq \be \leq k_s \}$ be the orthonormal basis of $\fr{m}_{r s}$ with respect to the inner product $B$ defined from the Weyl basis  $\{ X_{\al \be}^{(r s)}, Y_{\al \be}^{(r s)} ; 1 \leq \al \leq k_r,  1 \leq \be \leq k_s \}$. 
For the elements $H_i, H_j$ in $\fr{c}$, we have 
\begin{eqnarray}
 r(H_i, H_j) = \frac{y_i  y_j}{4} \sum_{r < s}\frac{1}{x_{r s}^2} \Big( \sum_{\al, \be} \left(B( \widetilde{Y}_{\al \be}^{(r s)}, [H_i, \widetilde{X}_{\al \be}^{(r s)}]) B( \widetilde{Y}_{\al \be}^{(r s)}, [H_j, \widetilde{X}_{\al \be}^{(r s)}]) \right. \nonumber 
 \\ \left. 
 +  B( \widetilde{X}_{\al \be}^{(r s)}, [H_i, \widetilde{Y}_{\al \be}^{(r s)}]) B( \widetilde{X}_{\al \be}^{(r s)}, [H_j, \widetilde{Y}_{\al \be}^{(r s)}])\right)\Big). 
 \end{eqnarray}
 \end{lemma}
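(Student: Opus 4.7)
The plan is to apply the standard formula for the Ricci tensor of a left-invariant metric on a compact Lie group,
\[ r(X,Y) = -\frac{1}{2}\sum_{\alpha} g([X, e_\alpha], [Y, e_\alpha]) + \frac{1}{4}\sum_{\alpha, \beta} g([e_\alpha, e_\beta], X)\, g([e_\alpha, e_\beta], Y) + \frac{1}{2}B(X, Y), \]
where $\{e_\alpha\}$ is a $g$-orthonormal basis of $\fr{g}$ adapted to the decomposition \eqref{decom_su(n)}: on each $\fr{m}_{rs}$ one takes $\widetilde{X}_{\alpha\beta}^{(rs)}/\sqrt{x_{rs}}$ and $\widetilde{Y}_{\alpha\beta}^{(rs)}/\sqrt{x_{rs}}$, while the precise basis chosen on $\fr{k}$ will not matter in what follows because $[H_i,\fr{k}]=(0)$. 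I would then substitute $X=H_i$, $Y=H_j$ and simplify using Lemmas~\ref{action_center} and~\ref{brackets}.

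The delicate step is showing that the first sum cancels the Killing form term $\tfrac12 B(H_i,H_j)$. By Lemma~\ref{action_center}, $\ad(H_i)$ preserves every $\fr{m}_{rs}$, sending each $\widetilde{X}_{\alpha\beta}^{(rs)}$ to a scalar multiple of $\widetilde{Y}_{\alpha\beta}^{(rs)}$ and vice versa. Since $g|_{\fr{m}_{rs}} = x_{rs}B$, the factor $x_{rs}$ that appears when evaluating $g$ on two such brackets exactly cancels the two $1/\sqrt{x_{rs}}$ from the orthonormalization, and the sum collapses to a pure $B$-expression
\[ \sum_{\alpha} g([H_i,e_\alpha],[H_j,e_\alpha]) = \sum_{r<s}\sum_{\alpha,\beta}\Big( B([H_i,\widetilde{X}_{\alpha\beta}^{(rs)}],[H_j,\widetilde{X}_{\alpha\beta}^{(rs)}]) + B([H_i,\widetilde{Y}_{\alpha\beta}^{(rs)}],[H_j,\widetilde{Y}_{\alpha\beta}^{(rs)}]) \Big). \]
Since contributions from basis vectors in $\fr{k}$ vanish, this is precisely $\sum_\alpha B([H_i,e'_\alpha],[H_j,e'_\alpha])$ for any $B$-orthonormal basis $\{e'_\alpha\}$ of $\fr{g}$, which by ad-invariance of the Killing form equals $B(H_i,H_j)$; this produces the required cancellation.

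It remains to compute $\tfrac14 \sum_{\alpha,\beta} g([e_\alpha,e_\beta],H_i)\,g([e_\alpha,e_\beta],H_j)$. By Lemma~\ref{brackets}, $[e_\alpha,e_\beta]$ has a non-zero projection onto $\fr{c}$ only when both $e_\alpha,e_\beta$ lie in a common $\fr{m}_{rs}$, and a short calculation with the Weyl basis further restricts the contributing pairs to $(\widetilde{X}_{\alpha\beta}^{(rs)},\widetilde{Y}_{\alpha\beta}^{(rs)})$ with matching indices (and their reverses). For such a pair, using $g|_{\fr{c}_i}=y_i B|_{\fr{c}_i}$ together with the ad-invariance identity $B([\widetilde{X},\widetilde{Y}],H_i) = B(\widetilde{Y},[H_i,\widetilde{X}]) = -B(\widetilde{X},[H_i,\widetilde{Y}])$, one obtains $g([e_\alpha,e_\beta],H_i) = (y_i/x_{rs})\,B(\widetilde{Y}_{\alpha\beta}^{(rs)}, [H_i,\widetilde{X}_{\alpha\beta}^{(rs)}])$. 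Multiplying by the analogous expression for $H_j$ and summing over ordered pairs (which doubles the count and symmetrizes the result into the two displayed summand types in the lemma) yields exactly the stated formula. The principal obstacle is the cancellation in the previous paragraph: when $g$ is not bi-invariant it is not a priori clear that the first Ricci term reduces to $\tfrac12 B(H_i,H_j)$, and this reduction depends critically on $H_i$ being central in $\fr{k}$ so that $\ad(H_i)$ preserves each irreducible summand $\fr{m}_{rs}$.
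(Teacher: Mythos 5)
Your proof is correct and is essentially the argument the paper omits (the paper simply defers to Proposition~4.4 of \cite{ASS2}). One caveat on notation: the term $+\tfrac12 B(X,Y)$ in the Ricci formula you quote is right under the paper's running convention from Section~3 onward that $B$ denotes the \emph{positive} invariant inner product $-B_{\mathrm{Killing}}$ (visible, e.g., in the normalization $B(\widetilde{H}_i,\widetilde{H}_i)=1$); if $B$ were the Killing form itself the sign would be $-\tfrac12 B(X,Y)$. With that convention fixed, your cancellation is exactly right: $-\tfrac12\sum_\alpha g([H_i,e_\alpha],[H_j,e_\alpha])$ reduces (after the $x_{rs}$ factors cancel and $[H_i,\fr{k}]=0$ removes the $\fr{k}$-part) to $-\tfrac12\sum_\alpha B([H_i,e'_\alpha],[H_j,e'_\alpha]) = -\tfrac12 B(H_i,H_j)$, killing the last term. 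Your treatment of the remaining sum is likewise complete: by Lemma~\ref{brackets} only $[\fr{m}_{rs},\fr{m}_{rs}]$ meets $\fr{c}$, the Weyl-basis computation confirms that only pairs $(\widetilde{X}^{(rs)}_{\alpha\beta},\widetilde{Y}^{(rs)}_{\alpha\beta})$ with \emph{matching} indices have a nonzero $\fr{c}$-component, and summing over ordered pairs doubles each contribution, matching the two (equal, via the ad-invariance identity you cite) summands displayed in the lemma.
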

 \begin{proof}  
 It follows from an appropriate adjustment of the proof of Proposition 4.4 in \cite{ASS2} and we omit it.
\end{proof}

\begin{lemma}\label{consditions_r(H_i, H_j)=0} 
For the orthogonal basis $\{H_1, H_2, \dots, H_{p-1} \}$ of  the center $\fr{c}$, 
we have 

$ r(H_i, H_j) =0$  for $ 1 \leq i < j \leq p$,  if and only if 
\begin{eqnarray}\label{eqx_{ij}}
x_{i\,  i+1} = x_{i \, i+2} = \cdots = x_{i \,  p},  &  for & i = 1, 2, \dots,  p-1. 
 \end{eqnarray}
 \end{lemma}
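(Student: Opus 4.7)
The plan is to make the formula from Lemma \ref{r(H_i, H_j)} fully explicit for our basis $\{H_1,\ldots,H_{p-1}\}$ of $\fr{c}$ and then analyze when it vanishes. I introduce scalars $a_k(r,s)$ by $[H_k,X^{(rs)}_{\alpha\beta}]=a_k(r,s)\,Y^{(rs)}_{\alpha\beta}$ (equivalently $[H_k,Y^{(rs)}_{\alpha\beta}]=-a_k(r,s)\,X^{(rs)}_{\alpha\beta}$); by Lemma \ref{action_center} these scalars are independent of $(\alpha,\beta)$. Since $\widetilde X^{(rs)}_{\alpha\beta}$ and $\widetilde Y^{(rs)}_{\alpha\beta}$ are the same positive scalar multiple of $X^{(rs)}_{\alpha\beta}$ and $Y^{(rs)}_{\alpha\beta}$, each of the two bracket products in the inner sum of Lemma \ref{r(H_i, H_j)} contributes $a_i(r,s)a_j(r,s)$, and gathering $k_r k_s$ basis pairs in each $\fr{m}_{rs}$ gives
\[
r(H_i,H_j) \;=\; \frac{y_i y_j}{2}\sum_{1\le r<s\le p}\frac{k_r k_s\, a_i(r,s)\, a_j(r,s)}{x_{rs}^2}.
\]

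Next, Lemma \ref{action_center} implies that $a_k(r,s)\neq 0$ exactly when $r\le k\le s$; for $i<j$ the only contributing pairs therefore satisfy $r\le i<j\le s$, and their products $a_i(r,s)\,a_j(r,s)$ fall into the four subcases $r<i,\,s=j$; $r<i,\,s>j$; $r=i,\,s=j$; $r=i,\,s>j$. Using the identity $1/k_i+1/(N-K_i)=(N-K_{i-1})/(k_i(N-K_i))$ together with $\sum_{s=j+1}^{p}k_s=N-K_j$ to merge the diagonal ($s=j$) contribution with the $s>j$ sum, the four pieces combine into
\[
r(H_i,H_j) \;=\; \frac{y_i y_j}{2(N-K_i)(N-K_j)}\left(\,\sum_{r=1}^{i-1} k_r\, S^{(j)}_r \;+\; (N-K_{i-1})\, S^{(j)}_i\right),
\]
where $S^{(j)}_r:=\sum_{s=j+1}^{p} k_s\bigl(x_{rs}^{-2}-x_{rj}^{-2}\bigr)$ and every weight $k_r$, $N-K_{i-1}$ is strictly positive.

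From this formula the ``if'' direction is immediate: assuming $x_{r,r+1}=\cdots=x_{rp}$ for every $r$, each $S^{(j)}_r$ vanishes identically, so $r(H_i,H_j)=0$. For the ``only if'' direction I argue by induction on $i$. In the base case $i=1$ only the $r=1$ term survives, so $r(H_1,H_j)=0$ reduces to $S^{(j)}_1=0$ for each admissible $j$. A reverse induction on $j$ then forces $x_{12}=\cdots=x_{1p}$: at $j=p-1$ the single equation $k_p(x_{1p}^{-2}-x_{1,p-1}^{-2})=0$ gives $x_{1,p-1}=x_{1p}$, and successive values of $j$ descending to $2$ yield the remaining equalities since the higher $x_{1s}$ are already identified. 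For the inductive step, the hypothesis that $x_{r,r+1}=\cdots=x_{rp}$ already holds for all $r<i$ makes every $S^{(j)}_r$ with $r<i$ vanish, so $r(H_i,H_j)=0$ collapses to $(N-K_{i-1})\,S^{(j)}_i=0$, and the identical reverse induction on $j$ produces $x_{i,i+1}=\cdots=x_{ip}$.

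The main obstacle is the algebraic simplification in the middle step: the four subcases arising from Lemma \ref{action_center} must be reorganized so that the isolated $s=j$ term merges with the sum over $s>j$ to produce a weighted telescoping difference $x_{rs}^{-2}-x_{rj}^{-2}$ with uniformly positive weights $k_r$ and $N-K_{i-1}$. Once this compact form is reached, both directions of the equivalence and the double induction become essentially automatic.
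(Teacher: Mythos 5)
Your proof is correct and follows essentially the same strategy as the paper's: compute $r(H_i,H_j)$ via Lemmas \ref{r(H_i, H_j)} and \ref{action_center}, reduce to the condition $-x_{ij}^{-2}+\frac{1}{N-K_j}\sum_{s>j}k_s x_{is}^{-2}=0$ using the inductive hypothesis to kill the $r<i$ contributions, and close with a reverse induction on $j$. The only difference is presentational: you package the bracket coefficients as $a_k(r,s)$ and the key combination as $S_r^{(j)}$, giving one uniform formula that covers both the $i=1$ base case and the inductive step, whereas the paper treats $i=1$ separately before the general induction.
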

 \begin{proof}  First we consider the case when $i=1$. We claim that $ r(H_1, H_j) =0$ for  $ 1 \leq j \leq p-1$ if and only if $x_{1\, 2} =   \cdots = x_{1\, p-1} =x_{1 \,  p} $. 
 
 From Lemma \ref{r(H_i, H_j)}, we see that $ r(H_1, H_j) =0$ if and only if 
\begin{eqnarray}\label{r(H_1, H_j)}  \sum_{r < s}\frac{1}{x_{r s}^2} \Big( \sum_{\al, \be} \left(B( \widetilde{Y}_{\al \be}^{(r s)}, [H_1, \widetilde{X}_{\al \be}^{(r s)}]) B( \widetilde{Y}_{\al \be}^{(r s)}, [H_j, \widetilde{X}_{\al \be}^{(r s)}]) \right. \nonumber 
 \\ \left. 
 +  B( \widetilde{X}_{\al \be}^{(r s)}, [H_1, \widetilde{Y}_{\al \be}^{(r s)}]) B( \widetilde{X}_{\al \be}^{(r s)}, [H_j, \widetilde{Y}_{\al \be}^{(r s)}])\right)\Big) =0. 
 \end{eqnarray}
Now we have $ [H_1, \widetilde{X}_{\al \be}^{(1 s)}] =  \left( \frac{1}{k_1}+\frac{1}{N - K_1}\right)\widetilde{Y}_{\al \be}^{(1 s)}$, $ [H_1, \widetilde{Y}_{\al \be}^{(1 s)}]$ $ =  -\left( \frac{1}{k_1}+\frac{1}{N - K_1}\right)\widetilde{X}_{\al \be}^{(1 s)}$ and,   for $r \geq 2$, $[ H_1, \fr{m}_{r s}] =( 0 )$  from Lemma \ref{action_center}. Thus  equation (\ref{r(H_1, H_j)}) becomes 
\begin{eqnarray}\label{r(H_1, H_j)2}  \sum_{1 < s}\frac{1}{x_{1 s}^2} \Big( \sum_{\al, \be} \left(B( \widetilde{Y}_{\al \be}^{(1 s)}, [H_1, \widetilde{X}_{\al \be}^{(1 s)}]) B( \widetilde{Y}_{\al \be}^{(1 s)}, [H_j, \widetilde{X}_{\al \be}^{(1 s)}]) \right. \nonumber 
 \\ \left. 
 +  B( \widetilde{X}_{\al \be}^{(1 s)}, [H_1, \widetilde{Y}_{\al \be}^{(1 s)}]) B( \widetilde{X}_{\al \be}^{(1 s)}, [H_j, \widetilde{Y}_{\al \be}^{(1 s)}])\right)\Big) =0. 
 \end{eqnarray} 
 By Lemma \ref{action_center} (1), we have   $[ H_j, \fr{m}_{1 s}] =( 0 )$  for $s \leq j -1$,
 $  [ H_j, X_{\al \be}^{(1 j)}] =  -\frac{1}{k_j}Y_{\al \be}^{(1 j)}$, 
 $  [ H_j, Y_{\al \be}^{(1 j)}] =  \frac{1}{k_j}X_{\al \be}^{(1 j)}$, and
 $  [ H_j, X_{\al \be}^{(1 s)}] = \frac{1}{N - K_j} Y_{\al \be}^{(1 s)}$, 
 $[ H_j, Y_{\al \be}^{(1 s)}] = -\frac{1}{N - K_j} X_{\al \be}^{(1 s)}$     for  
 $j+1\leq  s \leq  p$.   Hence, by (\ref{r(H_1, H_j)2}), we see that   $ r(H_1, H_j) =0$ if and only if 
 \begin{eqnarray*}  \left( \frac{1}{k_1}+\frac{1}{N - K_1}\right)\left(-\frac{1}{k_j}\frac{1}{x_{1 j}^2} 2 k_1 k_j
 + \frac{1}{N-K_j}\sum_{j+1 < s} \frac{1}{x_{1 s}^2} 2 k_1 k_s\right)=0. 
 \end{eqnarray*} 
 Therefore we obtain that  $ r(H_1, H_j) =0$ if and only if 
\begin{eqnarray}\label{r(H_1, H_j)3} 
- \frac{1}{x_{1 j}^2} 
 + \frac{1}{N-K_j}\sum_{j+1 < s} \frac{1}{x_{1 s}^2}  k_s =0. 
 \end{eqnarray}
 For $j=p-1$ we see that $0= \displaystyle - \frac{1}{x_{1 p-1}^2} 
 + \frac{1}{N-K_{p-1}} \frac{1}{x_{1 p}^2} k_p = - \frac{1}{x_{1 p-1}^2} 
 +  \frac{1}{x_{1 p}^2}$, and hence, $x_{1 p-1} = x_{1 p}$. Thus, from (\ref{r(H_1, H_j)3}), we see our claim  for $i=1$. 
 
 Now we consider the case when $i \geq 2$. We assume that the equations (\ref{eqx_{ij}})
hold for $i \leq \ell-1$ and  prove that (\ref{eqx_{ij}}) holds for $i =\ell$. 
 Indeed, we see that $ r(H_{\ell}, H_t) =0$ $(\ell  <  t)$ if and only if 
 \begin{eqnarray}\label{ell}  
 \sum_{r < s}\frac{1}{{x_{r s}}^2} \Big( \sum_{\al, \be} \left(B( \widetilde{Y}_{\al \be}^{(r s)}, [H_{\ell}, \widetilde{X}_{\al \be}^{(r s)}]) B( \widetilde{Y}_{\al \be}^{(r s)}, [H_t, \widetilde{X}_{\al \be}^{(r s)}]) \right. \nonumber 
 \\ \left. 
 +  B( \widetilde{X}_{\al \be}^{(r s)}, [H_{\ell}, \widetilde{Y}_{\al \be}^{(r s)}]) B( \widetilde{X}_{\al \be}^{(r s)}, [H_t, \widetilde{Y}_{\al \be}^{(r s)}])\right)\Big) =0. 
 \end{eqnarray}
  From Lemma \ref{action_center}  for $j=\ell$,  we see that equation (\ref{ell}) is divided into five sums: 
  \begin{eqnarray}\label{ell+1}  
  && 0 = \sum_{r \leq \ell-1, \, s \leq \ell-1}\frac{1}{{x_{r \,  s}}^2} \Big( \sum_{\al, \be} \left(B( \widetilde{Y}_{\al \be}^{(r \, s)}, [H_{\ell}, \widetilde{X}_{\al \be}^{(r \, s)}]) B( \widetilde{Y}_{\al \be}^{(r \, s)}, [H_t, \widetilde{X}_{\al \be}^{(r \, s)}]) \right. \nonumber 
 \\&& \left. 
 +  B( \widetilde{X}_{\al \be}^{(r\, s)}, [H_{\ell}, \widetilde{Y}_{\al \be}^{(r \, s}]) B( \widetilde{X}_{\al \be}^{(r \, s)}, [H_t, \widetilde{Y}_{\al \be}^{(r \, s)}])\right)\Big) \nonumber 
 \\
&& + \sum_{r \leq \ell}\frac{1}{{x_{r \,  \ell}}^2} \Big( \sum_{\al, \be} \left(B( \widetilde{Y}_{\al \be}^{(r \, \ell)}, [H_{\ell}, \widetilde{X}_{\al \be}^{(r \, \ell)}]) B( \widetilde{Y}_{\al \be}^{(r \, \ell)}, [H_t, \widetilde{X}_{\al \be}^{(r \, \ell)}]) \right. \nonumber 
 \\&& \left. 
 +  B( \widetilde{X}_{\al \be}^{(r\, \ell+1)}, [H_{\ell}, \widetilde{Y}_{\al \be}^{(r \, \ell )}]) B( \widetilde{X}_{\al \be}^{(r \, \ell)}, [H_t, \widetilde{Y}_{\al \be}^{(r \, \ell)}])\right)\Big) \nonumber 
 \\
 && + \sum_{r \leq \ell-1,\, \ell+1 \leq s}\frac{1}{{x_{r \, s}}^2} \Big( \sum_{\al, \be} \left(B( \widetilde{Y}_{\al \be}^{(r \, s)}, [H_{\ell}, \widetilde{X}_{\al \be}^{(r \, s )}]) B( \widetilde{Y}_{\al \be}^{(r \, s)}, [H_t, \widetilde{X}_{\al \be}^{(r \, s)}]) \right. \nonumber 
 \\
 && \left. 
 +  B( \widetilde{X}_{\al \be}^{(r\, s)}, [H_{\ell}, \widetilde{Y}_{\al \be}^{(r \, \ell )}]) B( \widetilde{X}_{\al \be}^{(r \, s)}, [H_t, \widetilde{Y}_{\al \be}^{(r \, s )}])\right)\Big) \nonumber 
 \\
&& + \sum_{r = \ell  < s}\frac{1}{{x_{ \ell  \, s}}^2} \Big( \sum_{\al, \be} \left(B( \widetilde{Y}_{\al \be}^{(\ell \, s)}, [H_{\ell}, \widetilde{X}_{\al \be}^{(\ell \, s)}]) B( \widetilde{Y}_{\al \be}^{(\ell \, s)}, [H_t, \widetilde{X}_{\al \be}^{(\ell \, s)}]) \right. \nonumber 
 \\&& \left. 
 +  B( \widetilde{X}_{\al \be}^{(\ell  \, s1)}, [H_{\ell }, \widetilde{Y}_{\al \be}^{(\ell  \, s)}]) B( \widetilde{X}_{\al \be}^{(\ell  \, s )}, [H_t, \widetilde{Y}_{\al \be}^{(\ell  \, s)}])\right)\Big) \nonumber 
\\ 
&& + \sum_{ \ell +1 \leq r < s}\frac{1}{{x_{r \,  s}}^2} \Big( \sum_{\al, \be} \left(B( \widetilde{Y}_{\al \be}^{(r \, s)}, [H_{\ell }, \widetilde{X}_{\al \be}^{(r \, s)}]) B( \widetilde{Y}_{\al \be}^{(r \, s)}, [H_t, \widetilde{X}_{\al \be}^{(r \, s)}]) \right. \nonumber 
 \\&& \left. 
 +  B( \widetilde{X}_{\al \be}^{(r\, s)}, [H_{\ell }, \widetilde{Y}_{\al \be}^{(r \, s)}]) B( \widetilde{X}_{\al \be}^{(r \, s)}, [H_t, \widetilde{Y}_{\al \be}^{(r \, s)}])\right)\Big). \nonumber 
 \end{eqnarray}
 From Lemma \ref{action_center}  for $j=\ell$,  we have that,  for $1\leq r  \leq \ell -1$ and $ r < s \leq \ell -1$, 
$[ H_\ell, \fr{m}_{r s}] =( 0 )$ and that,  $ \ell +1 \leq r < s$,  $[ H_\ell, \fr{m}_{r s}] =( 0 )$. Hence, we see that the first part  and fifth sum vanishes.  From Lemma \ref{action_center}  for for $j= t  >  \ell$,  we have, for $r \leq \ell$, $ [H_t, \fr{m}_{r \ell}]=(0)$. Hence, we see the second part vanishes. 

Note that $ [H_t, \fr{m}_{r, s}]=(0)$ for $s \leq  t-1$ and  $[H_t, \fr{m}_{\ell, s}]=(0)$ for $s \leq  t-1$ from Lemma \ref{action_center}.  
Thus  we see that equation (\ref{ell}) becomes 
\begin{eqnarray}
 && 0= \sum_{r \leq \ell-1,\, t \leq s}\frac{1}{{x_{r \, s}}^2} \Big( \sum_{\al, \be} \left(B( \widetilde{Y}_{\al \be}^{(r \, s)}, [H_{\ell}, \widetilde{X}_{\al \be}^{(r \, s )}]) B( \widetilde{Y}_{\al \be}^{(r \, s)}, [H_t, \widetilde{X}_{\al \be}^{(r \, s)}]) \right. \nonumber 
 \\
 && \left. 
 +  B( \widetilde{X}_{\al \be}^{(r\, s)}, [H_{\ell}, \widetilde{Y}_{\al \be}^{(r \, \ell )}]) B( \widetilde{X}_{\al \be}^{(r \, s)}, [H_t, \widetilde{Y}_{\al \be}^{(r \, s )}])\right)\Big) \label{eq14} 
\\
&& + \sum_{  t  \leq s}\frac{1}{{x_{ \ell  \, s}}^2} \Big( \sum_{\al, \be} \left(B( \widetilde{Y}_{\al \be}^{(\ell \, s)}, [H_{\ell}, \widetilde{X}_{\al \be}^{(\ell \, s)}]) B( \widetilde{Y}_{\al \be}^{(\ell \, s)}, [H_t, \widetilde{X}_{\al \be}^{(\ell \, s)}]) \right. \nonumber 
 \\&& \left. 
 +  B( \widetilde{X}_{\al \be}^{(\ell  \, s1)}, [H_{\ell }, \widetilde{Y}_{\al \be}^{(\ell  \, s)}]) B( \widetilde{X}_{\al \be}^{(\ell  \, s )}, [H_t, \widetilde{Y}_{\al \be}^{(\ell  \, s)}])\right)\Big). \label{eq15} 
 \end{eqnarray}
From Lemma \ref{action_center} and  equations (\ref{eqx_{ij}})
hold for $i \leq \ell-1$, we see that part  (\ref{eq14}) becomes 
\begin{eqnarray}
&&\sum_{r \leq \ell-1}\frac{1}{{x_{r \, t}}^2} \Big( \sum_{\al, \be} \left(B( \widetilde{Y}_{\al \be}^{(r \, t)}, [H_{\ell}, \widetilde{X}_{\al \be}^{(r \, t )}]) B( \widetilde{Y}_{\al \be}^{(r \, t)}, [H_t, \widetilde{X}_{\al \be}^{(r \, t)}]) \right. \nonumber
\\
 && \left. 
 +  B( \widetilde{X}_{\al \be}^{(r\, t)}, [H_{\ell}, \widetilde{Y}_{\al \be}^{(r \, t)}]) B( \widetilde{X}_{\al \be}^{(r \, t)}, [H_t, \widetilde{Y}_{\al \be}^{(r \, t)}])\right)\Big) \nonumber
\\ 
 &&  +\sum_{r \leq \ell-1,\, t < s}\frac{1}{{x_{r \, s}}^2} \Big( \sum_{\al, \be} \left(B( \widetilde{Y}_{\al \be}^{(r \, s)}, [H_{\ell}, \widetilde{X}_{\al \be}^{(r \, s )}]) B( \widetilde{Y}_{\al \be}^{(r \, s)}, [H_t, \widetilde{X}_{\al \be}^{(r \, s)}]) \right. \nonumber 
 \\
 && \left. 
 +  B( \widetilde{X}_{\al \be}^{(r\, s)}, [H_{\ell}, \widetilde{Y}_{\al \be}^{(r \, \ell )}]) B( \widetilde{X}_{\al \be}^{(r \, s)}, [H_t, \widetilde{Y}_{\al \be}^{(r \, s )}])\right)\Big)  \nonumber 
 \\&& =\sum_{r \leq \ell-1}\frac{1}{{x_{r \, t}}^2} \Big(\frac{1}{N-K_\ell}\Big(-\frac{1}{k_t} \Big) 2 k_r  k_t\Big)+\sum_{t <s} \frac{1}{{x_{r \, s}}^2} \Big(\frac{1}{N-K_\ell}\Big(\frac{1}{N-K_t} \Big) 2 k_r  k_s\Big)
  \nonumber 
  \\&&
  = \frac{1}{N-K_\ell}\sum_{r \leq \ell-1}\frac{1}{{x_{r \, t}}^2} \Big(- 2 k_r   +\sum_{t <s}  \frac{1}{N-K_t}   2 k_r  k_s\Big) =0.    \nonumber 
  \end{eqnarray}
  From Lemma \ref{action_center}, we see that  part (\ref{eq15}) becomes 
\begin{eqnarray}
&& \sum_{t  \leq s}\frac{1}{{x_{ \ell  \, s}}^2} \Big( \sum_{\al, \be} \left(B( \widetilde{Y}_{\al \be}^{(\ell \, s)}, [H_{\ell}, \widetilde{X}_{\al \be}^{(\ell \, s)}]) B( \widetilde{Y}_{\al \be}^{(\ell \, s)}, [H_t, \widetilde{X}_{\al \be}^{(\ell \, s)}]) \right. \nonumber 
\\ & &
 \left. 
 +  B( \widetilde{X}_{\al \be}^{(\ell  \, s)}, [H_{\ell }, \widetilde{Y}_{\al \be}^{(\ell  \, s)}]) B( \widetilde{X}_{\al \be}^{(\ell  \, s )}, [H_t, \widetilde{Y}_{\al \be}^{(\ell  \, s)}])\right)\Big) \nonumber 
 \end{eqnarray}
\begin{eqnarray}
&& =\frac{1}{{x_{\ell \, t}}^2} \Big(\frac{1}{k_\ell}+\frac{1}{N-K_\ell}\Big)\Big(-\frac{1}{k_t} \Big) 2 k_\ell  k_t\Big)+\sum_{t <s} \frac{1}{{x_{\ell \, s}}^2} \Big(\frac{1}{k_\ell}+\frac{1}{N-K_\ell}\Big)\Big(\frac{1}{N-K_t} \Big) 2 k_\ell  k_s\Big) \nonumber 
\\&& =\Big(\frac{1}{k_\ell}+\frac{1}{N-K_\ell}\Big) 2 k_\ell \Big(-\frac{1}{{x_{\ell \, t}}^2}+ \sum_{t <s} \frac{1}{{x_{\ell \, s}}^2}\frac{k_s}{N-K_t}\Big).  \nonumber 
  \end{eqnarray}
  Therefore, we obtain that  $ r(H_{\ell}, H_t) =0$ $(\ell  <  t)$ if and only if  
  \begin{eqnarray}\label{eq16}
  -\frac{1}{{x_{\ell \, t}}^2}+ \sum_{t <s} \frac{1}{{x_{\ell \, s}}^2}\frac{k_s}{N-K_t}.
     \end{eqnarray} 
For $t=p-1$ we see that $0= \displaystyle - \frac{1}{x_{\ell \,  p-1}^2} 
 + \frac{1}{N-K_{p-1}} \frac{1}{x_{\ell\, p}^2} k_p = - \frac{1}{x_{\ell \,  p-1}^2} 
 +  \frac{1}{x_{\ell \,  p}^2}$, and hence, $x_{\ell \, p-1} = x_{\ell \,  p}$. From (\ref{eq16}), we see our claim  for $i=\ell$ by induction. 
\end{proof}
We denote by  $\{\widetilde{H}_1, \widetilde{H}_2, \dots, \widetilde{H}_{p-1} \}$  the orthonormal basis of  the center $\fr{c}$ obtained from the orthogonal basis $\{H_1, H_2, \dots, H_{p-1} \}$.

\begin{lemma}\label{i rs} 
 For $i=1, \dots , p-1$, we have the following: 
\begin{equation*}
\begin{array}{l} 
\vspace{0.3cm}
1)  \ \  \mbox{For} \quad r < s <  i, \quad \displaystyle{ {\fr{m}_{r s} \brack {\fr{c}_{i} \ \fr{m}_{r s}}} = 0}. 
\\  \vspace{0.3cm}
2) \ \ \mbox{For} \quad r  \leq i-1, \quad \displaystyle{ {\fr{m}_{r i} \brack {\fr{c}_{i} \ \fr{m}_{r i}}} = \frac{N-K_{i}}{N(N-K_{i-1})}k_r }.   \quad
\\  \vspace{0.3cm}
3) \ \ \mbox{For} \quad  r  \leq i-1 \ \mbox{and} \   s  \geq i+1, \quad \displaystyle{ {\fr{m}_{r s} \brack {\fr{c}_{i} \ \fr{m}_{r s}}} = \frac{k_i k_r  k_s}{N(N-K_{i-1}) (N- K_{i})} }.  \quad  
\\\vspace{0.3cm}
4 ) \ \ \mbox{For} \quad  s  \geq i+1,  \ \displaystyle{ {\fr{m}_{1 s} \brack {\fr{c}_{1} \ \fr{m}_{1 s}}} = \frac{1}{N-K_{1}}k_s }, \ \ \displaystyle{ {\fr{m}_{i s} \brack {\fr{c}_{i} \ \fr{m}_{i s}}} = \frac{N-K_{i-1}}{N(N-K_{i})}k_s } \ \  \mbox{for}  \ \ i \geq 2. \\
5) \ \  \mbox{For} \quad  s > r \geq i+1,  \ \displaystyle{ {\fr{m}_{r s} \brack {\fr{c}_{i} \ \fr{m}_{r s}}} = 0}. 
\end{array} 
\end{equation*}

In particular, if $k_2 = \cdots = k_p = k$, we have 
\begin{equation*}
\begin{array}{l} 
\vspace{0.3cm}
6) \ \  \displaystyle{ {\fr{m}_{1 i} \brack {\fr{c}_{i} \ \fr{m}_{1 i}}} = 
   \frac{p - i}{N(p - i +1)}k_1 } \ \ \mbox{for} \ \  2 \leq i , 
   \ \    \displaystyle{ {\fr{m}_{r i} \brack {\fr{c}_{i} \ \fr{m}_{r i}}} = 
   \frac{p - i}{N(p - i + 1)}k} \ \ \mbox{for} \ \  2 \leq r \leq  i-1, 
\\ \vspace{0.3cm}
7) 
 \ \ \displaystyle{ {\fr{m}_{1 s} \brack {\fr{c}_{i} \ \fr{m}_{1 s}}} = \frac{k_1}{N(p - i + 1) (p- i)}}      \ \ \mbox{for} \quad  2 \leq i \ \mbox{and} \   s  \geq i+1, \\
   \vspace{0.3cm} \quad  \ \ \displaystyle{ {\fr{m}_{r s} \brack {\fr{c}_{i} \ \fr{m}_{r s}}} = \frac{k}{N(p - i + 1) (p- i)}} \ \ \mbox{for} \ \  2 \leq r \leq  i-1 \ \mbox{and} \   s  \geq i+1,  
\\\vspace{0.3cm}
8) \ \   \displaystyle{ {\fr{m}_{1 s} \brack {\fr{c}_{1} \ \fr{m}_{1 s}}} = \frac{1}{p-1} }  \quad  \mbox{for} \quad  s  \geq 2,
 \ \     \displaystyle{ {\fr{m}_{i s} \brack {\fr{c}_{i} \ \fr{m}_{i s}}} =
   \frac{p - i +1}{N(p - i )}k}  \ \  \mbox{for}  \ \ i \geq 2 \ \mbox{and} \   s  \geq i+1. 
\end{array} 
\end{equation*}
\end{lemma}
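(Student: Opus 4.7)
The plan is to evaluate each triple bracket ${\fr{m}_{rs} \brack {\fr{c}_i \ \fr{m}_{rs}}}$ directly from its definition as a sum of squares of the structure constants $A^\gamma_{\alpha\beta} = -B([e_\alpha, e_\beta], e_\gamma)$ taken with respect to $(-B)$-orthonormal bases of $\fr{c}_i$, $\fr{m}_{rs}$, $\fr{m}_{rs}$. Since $\fr{c}_i = \bb{R} H_i$ is one-dimensional and Lemma \ref{action_center} already expresses $[H_i, X_{\alpha\beta}^{(rs)}]$ and $[H_i, Y_{\alpha\beta}^{(rs)}]$ explicitly in the Weyl basis, the computation reduces to clean bookkeeping once the $(-B)$-norms of $H_i$ and of the Weyl basis vectors are recorded.

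I would first compute the normalizations. Using the identification $B(X,Y) = 2N\tr(XY)$ on $\fr{su}(N)$, a direct trace calculation gives $-B(H_i, H_i) = 2N(N-K_{i-1})/(k_i(N-K_i))$ (exploiting $k_i + (N-K_i) = N - K_{i-1}$) and $-B(X_{\alpha\beta}^{(rs)}, X_{\alpha\beta}^{(rs)}) = -B(Y_{\alpha\beta}^{(rs)}, Y_{\alpha\beta}^{(rs)}) = 4N$. Because the $X$- and $Y$-norms coincide, writing $[H_i, X_{\alpha\beta}^{(rs)}] = c\, Y_{\alpha\beta}^{(rs)}$ yields a single nonzero squared structure constant $c^2/(-B(H_i,H_i))$ in the orthonormal basis, and $[H_i, Y_{\alpha\beta}^{(rs)}] = -c\, X_{\alpha\beta}^{(rs)}$ contributes an identical squared constant. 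Summing over the $k_r k_s$ Weyl index pairs produces the master formula
$${\fr{m}_{rs} \brack {\fr{c}_i \ \fr{m}_{rs}}} \;=\; \frac{2 k_r k_s\, c^2}{-B(H_i,H_i)} \;=\; \frac{c^2\, k_r k_s\, k_i (N-K_i)}{N(N-K_{i-1})}.$$
The five cases 1)--5) then correspond to the five possibilities for the pair $(r,s)$ relative to $i$ enumerated in Lemma \ref{action_center} with $j = i$: items 1) and 5) give $c = 0$, since $[H_i, \fr{m}_{rs}] = 0$ when $s \leq i-1$ or when $i+1 \leq r$; item 2) gives $c = -1/k_i$; item 3) gives $c = 1/(N - K_i)$; and item 4) gives $c = 1/k_i + 1/(N - K_i) = (N - K_{i-1})/(k_i(N-K_i))$. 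Substitution into the master formula reproduces each of the stated expressions, with the convention $N - K_0 = N$ handling the $i = 1$ subcase of 4).

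For the specializations 6)--8), I would substitute $k_j = k$ for $j \geq 2$, yielding $N - K_{i-1} = (p-i+1)k$ and $N - K_i = (p-i)k$ for $i \geq 2$, while $N - K_1 = (p-1)k$ handles the $\fr{c}_1$ formula in 8). These collapse the general expressions 2)--4) into the compact forms claimed; the split between $r = 1$ and $2 \leq r \leq i-1$ in items 6) and 7) merely reflects whether the surviving factor $k_r$ equals $k_1$ or $k$. The only real obstacle is careful bookkeeping of the normalization factors and faithful matching of the subcase indexing of Lemma \ref{action_center} with the five cases of the statement; beyond that, the proof is an entirely mechanical expansion.
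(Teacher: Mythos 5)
Your proof is correct and follows essentially the same route as the paper: normalize $H_i$ by $a_i^2 = k_i(N-K_i)/(2N(N-K_{i-1}))$ via the trace formula for the Killing form, read off the eigenvalue $c$ of $\ad H_i$ on $\fr{m}_{rs}$ from Lemma~\ref{action_center}, and assemble the triple bracket as $2 k_r k_s\, a_i^2 c^2$. The paper carries out each case separately rather than phrasing it as a single master formula, but the computation and the cancellations (equal $(-B)$-norms of $X$ and $Y$, the $k_r k_s$ index-pair count, the factor of $2$ from the $X\leftrightarrow Y$ pair) are identical.
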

\begin{proof}  For $i = 1, \dots, p-1$, put $\widetilde{H}_i = a_i H_i$. Then we see 
$$1 = B(\widetilde{H}_i, \widetilde{H}_i) = {a_i}^2  (-2 N \tr (H_i H_i)) 
= {a_i}^2 \,  2 N \left(\frac{1}{k_i}+ \frac{1}{N-K_i}\right). $$
Thus we have $\displaystyle a_i= \sqrt{\frac{k_i (N-K_{i})}{2 N(N-K_{i-1})}}$. 

For  $r  \leq i-1$,  we have 
\begin{eqnarray*}  
 {\fr{m}_{r i} \brack {\fr{c}_{i} \ \fr{m}_{r i}}} =
\frac{k_i (N-K_{i})}{2 N(N-K_{i-1})}\left( \frac{1}{k_i}\right)^2 \, 2 k_r k_i  = \frac{N-K_{i}}{N(N-K_{i-1})}k_r .
\end{eqnarray*}  

For $ r  \leq i-1 $ and  \  $ s  \geq i+1$, we have 
\begin{eqnarray*} 
 {\fr{m}_{r s} \brack {\fr{c}_{i} \ \fr{m}_{r s}}} = \frac{k_i (N-K_{i})}{2 N(N-K_{i-1})} \left( \frac{1}{N-K_i}\right)^2 \, 2 k_r k_s }$ $\displaystyle{ =\frac{k_i k_r  k_s}{N(N-K_{i-1}) (N- K_{i})}. \end{eqnarray*}  

For $i=1, \,  r  =1 $ and  \  $ s  \geq i+1$, we have 
\begin{eqnarray*}
 {\fr{m}_{1 s} \brack {\fr{c}_{1} \ \fr{m}_{1 s}}} = \frac{k_1(N-K_{1})}{2 N^2} \left(\frac{1}{k_1}+\frac{1}{N - K_1}\right)^2 \, 2 k_1 k_s =\frac{ k_s}{(N- K_{1})}. 
 \end{eqnarray*}  

For $i \geq 2$ and  \  $ s  \geq i+1$, we have 
\begin{eqnarray*} {\fr{m}_{i s} \brack {\fr{c}_{i} \ \fr{m}_{i s}}} = \frac{k_i(N-K_{i})}{2 N(N-K_{i-1})} \left( \frac{1}{k_i}+\frac{1}{N - K_i}\right)^2 \, 2 k_i k_s 
 =  \frac{N-K_{i-1}}{N(N-K_{i})}k_s. 
 \end{eqnarray*}

Now 6), 7) and 8) follow from 2), 3) and 4). 
\end{proof}

\begin{lemma}\label{k k_1} 
For $k_2 = \cdots = k_p = k$, 
we have the following: 
\begin{equation*}
\begin{array}{l} 
(1) \ \, \displaystyle{ {\fr{m}_{1 s} \brack {\fr{c}_{1} \ \fr{m}_{1 s}}} = \frac{1}{p-1}} \quad  \mbox{for} \quad  s  \geq 2,  \quad   \displaystyle{ \sum_{2 \leq i}{\fr{m}_{1 s} \brack {\fr{c}_{i} \ \fr{m}_{1 s}}} = \frac{(p-2) k_1}{(p-1) N} } \quad  \mbox{for} \quad  s  \geq 2, \\ 
(2) \ \, \displaystyle{ \sum_{ i }{\fr{m}_{r s} \brack {\fr{c}_{i} \ \fr{m}_{r s}}} = \sum_{ i \geq r }{\fr{m}_{r s} \brack {\fr{c}_{i} \ \fr{m}_{r s}}} = \frac{2 k}{ N} } \quad  \mbox{for} \quad  2  \leq r  < s,\\
(3) \ \,\displaystyle{  \sum_{2 \leq s}{\fr{m}_{1 s} \brack {\fr{c}_{t} \ \fr{m}_{1 s}}} = \frac{  k_1}{N} } \quad  \mbox{for} \quad  t \geq 2, \quad  \displaystyle{  \sum_{2 \leq r < s}{\fr{m}_{r s} \brack {\fr{c}_{t} \ \fr{m}_{r s}}} = \frac{(p-1)  k}{N} }  \quad  \mbox{for} \quad  t  \geq 2.
\end{array} 
\end{equation*}
\end{lemma}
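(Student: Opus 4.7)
The plan is to apply Lemma \ref{i rs} term-by-term, splitting each sum according to how the center index ($i$ in part (1), $t$ in part (3)) compares with the module indices $r,s$, and then recognizing the resulting expressions as telescoping sums. The first statement of (1) is nothing but Lemma \ref{i rs} (8) in the case $i=1$, so the only real work is in the second assertion of (1), in (2), and in (3).

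For the second assertion of (1), with $s\ge 2$ fixed, I split the range $2\le i\le p-1$ into three subranges. For $i<s$ one has $r=1\le i-1$ and $s\ge i+1$, so Lemma \ref{i rs} (7) applies and the term equals $\dfrac{k_1}{N(p-i+1)(p-i)}$. For $i=s$ (which only occurs when $s\le p-1$) Lemma \ref{i rs} (6) gives $\dfrac{(p-s)k_1}{N(p-s+1)}$. For $i>s$ Lemma \ref{i rs} (1) gives $0$. Using the telescoping identity
\[
\frac{1}{(p-i+1)(p-i)} \;=\; \frac{1}{p-i}-\frac{1}{p-i+1},
\]
the sum $\sum_{i=2}^{s-1}\frac{k_1}{N(p-i+1)(p-i)}$ collapses to $\dfrac{k_1}{N}\Bigl(\dfrac{1}{p-s+1}-\dfrac{1}{p-1}\Bigr)$, and adding the $i=s$ contribution yields
\[
\frac{k_1}{N}\Bigl(\frac{1}{p-s+1}-\frac{1}{p-1}+\frac{p-s}{p-s+1}\Bigr) \;=\; \frac{k_1}{N}\Bigl(1-\frac{1}{p-1}\Bigr) \;=\; \frac{(p-2)k_1}{(p-1)N}.
\]
The boundary case $s=p$ is handled by the same telescoping with the $i=s$ term absent.

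For part (2), the first equality follows because for $i<r$ one has $r\ge i+1$, so Lemma \ref{i rs} (5) gives zero; hence only $i\ge r$ contributes. For $2\le r<s$, the nonzero terms come from: $i=r$ (use Lemma \ref{i rs} (8), second sub-case), $r<i<s$ (use Lemma \ref{i rs} (7), second sub-case), $i=s$ when $s\le p-1$ (use Lemma \ref{i rs} (6), second sub-case), while $i>s$ gives zero by Lemma \ref{i rs} (1). The same telescoping trick reduces $\sum_{i=r+1}^{s-1}\frac{1}{(p-i+1)(p-i)}$ to $\frac{1}{p-s+1}-\frac{1}{p-r}$; assembling everything,
\[
\frac{k}{N}\Bigl(\frac{p-r+1}{p-r}+\frac{1}{p-s+1}-\frac{1}{p-r}+\frac{p-s}{p-s+1}\Bigr) \;=\; \frac{k}{N}\cdot 2 \;=\; \frac{2k}{N},
\]
with the obvious modification (drop the $i=s$ term) when $s=p$.

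For part (3), I fix $t\ge 2$ and sum over the relevant module. For $\fr{m}_{1s}$ with $s\ge 2$: $s<t$ gives $0$ by Lemma \ref{i rs} (1), $s=t$ gives $\frac{(p-t)k_1}{N(p-t+1)}$ by (6), and $s>t$ gives $\frac{k_1}{N(p-t+1)(p-t)}$ by (7); summing yields $\dfrac{k_1}{N(p-t+1)}\bigl((p-t)+1\bigr)=\dfrac{k_1}{N}$. For $\sum_{2\le r<s}$, I group the nonzero pairs by which of the three cases $r=t<s$, $r<t<s$, or $r<s=t$ occurs, count the pairs in each ($p-t$, $(t-2)(p-t)$, and $t-2$ respectively), and combine to obtain
\[
\frac{(p-t+1)k}{N} \;+\; \frac{(t-2)k}{N(p-t+1)} \;+\; \frac{(t-2)(p-t)k}{N(p-t+1)} \;=\; \frac{(p-t+1)+(t-2)}{N}\,k \;=\; \frac{(p-1)k}{N}.
\]
There is no conceptual obstacle here: the entire argument reduces to bookkeeping the cases of Lemma \ref{i rs} and exploiting one elementary telescoping identity. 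The only place to be careful is the boundary case $s=p$ (and the analogous endpoint in part (3)), where the $i=s$ or $s=t$ contributions must be handled separately to avoid off-by-one mistakes.
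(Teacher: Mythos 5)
Your proof is correct and follows essentially the same route as the paper: split each sum according to how the center index compares with $r,s$, invoke the relevant case of Lemma~\ref{i rs}, and telescope via $\frac{1}{(p-i+1)(p-i)}=\frac{1}{p-i}-\frac{1}{p-i+1}$. (Incidentally, the paper's displayed final line in the proof of part~(3) contains a typo --- it writes $\frac{k}{N}(p-2)$ where the arithmetic, and the lemma statement, give $\frac{(p-1)k}{N}$; your computation reaches the correct value.)
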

\begin{proof} 
(1) \ Note that  $\displaystyle{ {\fr{m}_{1 s} \brack {\fr{c}_{i} \ \fr{m}_{1 s}}} = \frac{k_1}{N(p - i + 1) (p- i)} }$ 
 $\displaystyle{=   \frac{k_1}{N}\left(\frac{1}{p- i}+\frac{-1}{p - i + 1} \right)}$ for $2 \leq i \leq s-1$  from  7) of Lemma \ref{i rs},  $\displaystyle{ {\fr{m}_{1 s} \brack {\fr{c}_{s} \ \fr{m}_{1s}}} = 
   \frac{p - s}{N(p - s +1)}k_1 }$  for $s \leq p-1$ from  6) of Lemma \ref{i rs} and $\displaystyle{ {\fr{m}_{1 s} \brack {\fr{c}_{i} \ \fr{m}_{1 s}}}=0}$ for $i \geq s+1$. 
  Thus we see that, for $s \leq p-1$,  
\begin{eqnarray*}  \sum_{2 \leq i}{\fr{m}_{1 s} \brack {\fr{c}_{i} \ \fr{m}_{1 s}}} } \displaystyle{= \frac{k_1}{N}\left(\frac{-1}{ p-1} + \frac{1}{p-s+1}+\frac{p-s}{ p-s+1 } \right) = \frac{(p-2) k_1}{(p-1) N}.
   \end{eqnarray*}
  For $s=p$, we see that 
  \begin{eqnarray*}  \sum_{2 \leq i \leq p-1}{\fr{m}_{1 p} \brack {\fr{c}_{i} \ \fr{m}_{1 p}}} } \displaystyle{= \frac{k_1}{N}\left(\frac{1}{(p - 1) (p-2)} + \cdots + \frac{1}{3 \cdot 2}+\frac{1}{ 2 \cdot 1 } \right) = \frac{(p-2) k_1}{(p-1) N}.
  \end{eqnarray*}
\ \  (2) \ For $2  \leq r$,  we have $ \displaystyle{{\fr{m}_{r s} \brack {\fr{c}_{i} \ \fr{m}_{r s}}} } =0$ for $1 \leq i \leq r-1$ from  Lemma \ref{i rs} (5) and $ \displaystyle{{\fr{m}_{r s} \brack {\fr{c}_{i} \ \fr{m}_{r s}}} } =0$ for $s+1 \leq i $ from  Lemma \ref{i rs} (1). 
Thus we have 
\begin{eqnarray*}  && \sum_{ i }{\fr{m}_{r s} \brack {\fr{c}_{i} \ \fr{m}_{r s}}} = 
\sum_{r \leq i \leq s}{\fr{m}_{r s} \brack {\fr{c}_{i} \ \fr{m}_{r s}}} = \frac{k}{N}\left(
\frac{p-r+1}{p-r}  +\frac{1}{(p-r)(p-(r+1))} \right.\\ && \left. +\cdots +\frac{1}{(p-s+2)(p-s+1)}+\frac{p-s}{p-s+1}\right)=  \frac{ 2 k}{N} 
\end{eqnarray*}
 for $s \leq p-1$. For $s=p$, we have 
 \begin{eqnarray*}   && \sum_{ i }{\fr{m}_{r p} \brack {\fr{c}_{i} \ \fr{m}_{r p}}} = 
\sum_{r \leq i \leq p-1}{\fr{m}_{r p} \brack {\fr{c}_{i} \ \fr{m}_{r p}}}  \\&& = \frac{k}{N}\left(
\frac{p-r+1}{p-r}+\frac{1}{(p-r)(p-(r+1))} +\cdots \right. }$ $\displaystyle{\left.  +\frac{1}{2\cdot 1}\right) = \frac{ 2 k}{N}.\end{eqnarray*}

 (3) \  From  1) of Lemma \ref{i rs},  we have 
  \begin{eqnarray*} &&
 \sum_{2 \leq s}{\fr{m}_{1 s} \brack {\fr{c}_{t} \ \fr{m}_{1 s}}} = {\fr{m}_{1 t} \brack {\fr{c}_{t} \ \fr{m}_{1 t}}} + \sum_{t+1 \leq s}{\fr{m}_{1 s} \brack {\fr{c}_{t} \ \fr{m}_{1 s}}} \\ 
 && = \frac{  k_1}{N}\left(\frac{ p-t}{p-t+1} +\frac{ 1}{(p-t+1)(p-t)} (p-t)\right) =  \frac{  k_1}{N}.
 \end{eqnarray*}   
 From  1) and 5) of Lemma \ref{i rs},  we have 
 \begin{eqnarray*} 
 & &  \sum_{2 \leq r < s}{\fr{m}_{r s} \brack {\fr{c}_{t} \ \fr{m}_{r s}}} =  \sum_{2 \leq r \leq t-1}{\fr{m}_{r t} \brack {\fr{c}_{t} \ \fr{m}_{r t}}} + \sum_{\substack{t+1 \leq s, \\ 2 \leq r \leq t-1} }{\fr{m}_{r s} \brack {\fr{c}_{t} \ \fr{m}_{r s}}}  +\sum_{t+1 \leq s \leq p}{\fr{m}_{t s} \brack {\fr{c}_{t} \ \fr{m}_{t s}}  }\\& &
   = \frac{  k}{N}\left(\frac{ p-t}{p-t+1}(t-2) +\frac{ 1}{(p-t+1)(p-t)} (p-t)(t-2)+ \frac{ p-t+1}{p-t}(p-t)\right) \\ & &=  \frac{  k}{N}(p-2). 
\end{eqnarray*}   
\end{proof} 

\noindent
 The following lemma generalizes Lemma 3.3 in \cite{ASS2}.

\begin{lemma}\label{ijk}   
We have the following: 
\begin{equation*}
\begin{array}{l} 
(1) \ \, \displaystyle{ {\fr{m}_{r s} \brack {\fr{m}_{r t} \ \fr{m}_{t s}}} = \frac{k_{r}k_{s}k_{t}}{N}},   \quad  \ \
(2) \ \,\displaystyle{ {\fr{m}_{i j} \brack {\fr{m}_{i} \ \fr{m}_{i j}}} = \frac{k_{j} ({k_{i}}^2-1)}{N} }, \quad   \ \
(3) \ \,\displaystyle{ {\fr{m}_{i} \brack {\fr{m}_{i} \ \fr{m}_{i}}} = \frac{k_{i} ({k_{i}}^2-1)}{N} }.
\end{array} 
\end{equation*}
\end{lemma}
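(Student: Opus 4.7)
The plan is to handle part (1) by explicit matrix computation and parts (2), (3) by a Schur/Casimir argument. First, with $B = 2N\tr$ restricted from $\fr{su}(N)$, a short calculation gives $(X^{(rs)}_{\al\be})^2 = -(E_{a,a}+E_{b,b})$ where $a = K_{r-1}+\al$ and $b = K_{s-1}+\be$, so that $-B(X^{(rs)}_{\al\be}, X^{(rs)}_{\al\be}) = 4N$. Consequently the orthonormal basis of $\fr{m}_{rs}$ with respect to $-B$ is obtained by the uniform scaling $\widetilde{X}^{(rs)}_{\al\be} = \tfrac{1}{2\sqrt{N}}\, X^{(rs)}_{\al\be}$ and $\widetilde{Y}^{(rs)}_{\al\be} = \tfrac{1}{2\sqrt{N}}\, Y^{(rs)}_{\al\be}$.

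For part (1), using only $E_{a b}E_{c d} = \delta_{bc}E_{a d}$, one shows $[X^{(rt)}_{\al\be}, X^{(ts)}_{\be'\gamma}] = \delta_{\be\be'}\, X^{(rs)}_{\al\gamma}$, with the remaining three parity combinations $(X,Y)$, $(Y,X)$, $(Y,Y)$ producing $\pm\delta_{\be\be'}$ times the corresponding Weyl basis element of $\fr{m}_{rs}$ (with middle-index pairing $\be = \be'$). After rescaling to the orthonormal basis, each nonzero structure constant $A^{\gamma}_{\alpha\beta} = -B([\,\cdot\,,\cdot\,],\cdot)$ has absolute value $\tfrac{1}{2\sqrt{N}}$ times $\delta_{\be\be'}\delta_{\al\al''}\delta_{\gamma\gamma''}$. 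Squaring contributes $\tfrac{1}{4N}$ per surviving tuple, and each of the four parity combinations produces $k_r k_t k_s$ such tuples, giving total $4 \cdot k_r k_t k_s \cdot \tfrac{1}{4N} = \tfrac{k_r k_s k_t}{N}$.

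For parts (2) and (3), I would invoke the identity, valid whenever $[\fr{m}_i,\fr{m}_j]\subset \fr{m}_k$,
\[
{\fr{m}_k \brack {\fr{m}_i\ \fr{m}_j}} \;=\; \sum_{\alpha,\beta}\bigl(-B\bigr)\bigl([\alpha,\beta],[\alpha,\beta]\bigr) \;=\; \sum_{\beta}\bigl(-B\bigr)\bigl(\beta,\, C\beta\bigr),
\]
where $\alpha, \beta$ range over orthonormal bases of $\fr{m}_i, \fr{m}_j$ and $C = -\sum_\alpha \ad_\alpha^{\,2}$; the second equality uses skew-symmetry of $\ad_\alpha$ with respect to $-B$, and by Schur's lemma $C$ acts as a scalar on each $\fr{su}(k_i)$-irreducible summand, so the bracket equals $C \cdot \dim \fr{m}_k$. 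For part (2), $\ad_{T_a}$ acts on $\fr{m}_{ij}$ by left multiplication by $T_a$ on the $(i,j)$-block $M(k_i,k_j)$, so $C$ acts by $-\sum T_a^{\,2}$; since $-B(T_a,T_a)=1$ gives $\tr(T_a^{\,2}) = -\tfrac{1}{2N}$, Schur on $\mathrm{End}(\mathbb{C}^{k_i})^{\fr{su}(k_i)}$ forces $\sum T_a^{\,2} = -\tfrac{k_i^2-1}{2Nk_i}\,\Id_{k_i}$, so $C = \tfrac{k_i^2-1}{2Nk_i}$ on $\fr{m}_{ij}$, and multiplying by $\dim\fr{m}_{ij}=2k_ik_j$ yields $\tfrac{k_j(k_i^2-1)}{N}$. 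For part (3), $\ad_{T_a}$ acts on $\fr{m}_i$ via the adjoint representation of $\fr{su}(k_i)$, whose trace is the intrinsic Killing form: $\tr(\ad_{T_a}^{\,2}|_{\fr{m}_i}) = 2k_i\tr(T_a^{\,2}) = -k_i/N$; hence $C = k_i/N$ on $\fr{m}_i$, and multiplying by $\dim\fr{m}_i = k_i^2 - 1$ gives $\tfrac{k_i(k_i^2-1)}{N}$.

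The main bookkeeping obstacle is keeping the two Killing forms straight: the ambient $B_{\fr{su}(N)} = 2N\tr$ used to normalize all bases, versus the intrinsic $B_{\fr{su}(k_i)} = 2k_i \tr$ that emerges naturally in the adjoint-trace calculation of part (3). Their ratio $N/k_i$ must cancel precisely at the right step, and one must also verify in part (2) that the Schur argument is applied to $\sum T_a^{\,2}$ using the $\fr{su}(N)$-normalized basis rather than the intrinsic one.
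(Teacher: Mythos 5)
Your proof is correct. The paper itself does not supply a proof of this lemma; it defers to \cite[Lemma 3.3]{ASS2} with the remark ``easy adjustment,'' so there is no internal argument to compare against. Your argument is a clean, self-contained alternative. The Weyl-basis computation in part (1) is exactly what one expects: identifying $X^{(rs)}_{\al\be}=E_{ab}-E_{ba}$ (with $a=K_{r-1}+\al$, $b=K_{s-1}+\be$) gives $(X^{(rs)}_{\al\be})^2=-(E_{aa}+E_{bb})$, hence $-B(X,X)=4N$, the scaling $1/(2\sqrt N)$, the structure constant $\pm\delta_{\be\be'}/(2\sqrt N)$ for each of the four parity pairs, and the count $4k_rk_tk_s\cdot\frac1{4N}$. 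For parts (2) and (3), the Casimir/Schur route is elegant and checks out: from $[\fr{m}_i,\fr{m}_{ij}]\subset\fr{m}_{ij}$ (Lemma \ref{brackets}) the identity ${\fr{m}_k\brack\fr{m}_i\ \fr{m}_j}=\sum_\beta(-B)(\beta,C\beta)$ with $C=-\sum_a\ad_{T_a}^2$ is valid, $C$ commutes with the $\fr{su}(k_i)$-action (standard Casimir argument, which you implicitly use and could state in one line), and the trace normalizations $\tr(T_a^2)=-\tfrac1{2N}$ and $\tr(\ad_{T_a}^2|_{\fr{su}(k_i)})=2k_i\tr(T_a^2)=-k_i/N$ are exactly right, yielding the scalars $\tfrac{k_i^2-1}{2Nk_i}$ and $\tfrac{k_i}{N}$ and hence the stated values after multiplying by $\dim\fr{m}_{ij}=2k_ik_j$ and $\dim\fr{m}_i=k_i^2-1$. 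The only thing worth adding is the one-sentence justification that $\sum_a T_a^2$ (resp.\ $\sum_a\ad_{T_a}^2$) commutes with the $\fr{su}(k_i)$-action before invoking Schur; everything else is complete.
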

\begin{proof}  It follows from an easy adjustment of the proof of \cite[Lemma 3.3]{ASS2} and we omit it. 
\end{proof}

\subsection{A class of invariant metrics}
Now we consider  the special case of the metric (\ref{metric_g})  on $\SU(k_1+(p-1) k)$ with $x_{1 j} = x_{1 2}$ for $j= 2, \ldots, p$, $x_{23} =  x_{r s}$ for $2 \leq r < s \leq p$, $x_j = x_2$  for $j= 2, \ldots, p$ and $y_j = y_2$  for $j= 2, \ldots, p-1$: 
 \begin{equation}\label{metric_g111} 
 g = y_1 B |_{ \fr{c}_1 }+ y_2\sum_{j=2}^{p-1} B |_{ \fr{c}_j }+  x_1 B |_{\fr{m}_1}+x_2\sum_{\ell=2}^{p} B |_{\fr{m}_\ell}+  x_{12}\sum_{2\le s \le p}  B |_{ \fr{m}_{1 s}}+ x_{2 3}\sum_{2\le r<s \le p}  B |_{ \fr{m}_{r s}}. 
 \end{equation}
 
 We introduce the following notation for Ricci  components of the metric (\ref{metric_g111}):
 \begin{eqnarray*}
 rr_i &=&\Ric(g)(\widetilde{H}_i, \tilde{H}_i),\qquad i=1,\dots,  p-1\\
 r_j &=& \Ric(g)(\widetilde{X}_i, \widetilde{X}_i), \qquad  j=1,\dots,  p\\
 r_{r s} &=& \Ric(g)(\widetilde{X}_{\al\be}^{(rs)}, \widetilde{X}_{\al\be}^{(rs)}), \qquad 1\leq r < s \leq  p.
 \end{eqnarray*}
 Here
 $\widetilde{X}_i\in\fr{m}_{i}$ and $\widetilde{X}_{\al\be}^{(rs)}\in\fr{m}_{r s}$.

\begin{prop}\label{ric_component}
For the Ricci components of the metric {\em(\ref{metric_g111}) }, we have that 
\begin{equation*} 
\begin{array}{llll} 
rr_2^{} = \cdots = rr_{p-1}^{}, &  r_2^{} = \cdots =  r_p^{},  & r_{1 s} = r_{1 2},  & r_{2 3} = r_{r s} \  ( 2\leq r < s \leq  p). 
\end{array} 
\end{equation*}

Moreover, we have 
\begin{eqnarray}
  rr_1^{} &=& \frac{1}{4} \frac{y_1}{{x_{12}}^2}, \\ 
  rr_2^{} &= & \frac{k_1}{4 N} \frac{y_2}{{x_{12}}^2} + \frac{k}{4 N} (p-1)\frac{y_2}{{x_{23}}^2}, \\
  r_1^{} &= & \frac{k_1}{4 N} \frac{1}{x_1}+\frac{k}{4 N}(p-1)\frac{x_1}{{x_{12}}^2}, \\ 
  r_2^{} &= &  \frac{k}{4 N} \frac{1}{x_2} +\frac{k_1}{4 N} \frac{x_2}{{x_{12}}^2}+ \frac{k}{4 N} (p-2) \frac{x_2}{{x_{23}}^2}, \\
  r_{12}& = & \frac{1}{2 x_{12}} - \frac{k}{4 N}(p-2)\frac{x_{23}}{{x_{12}}^2}
  - \frac{1}{4 N}\left(\frac{{k_1}^2-1}{k_1}\frac{x_1}{{x_{12}}^2}+ \frac{{k}^2-1}{k}\frac{x_2}{{x_{12}}^2}\right) \nonumber \\
  & & - \frac{1}{4 k k_1 (p-1)}\left(\frac{y_1}{x_{12}^2}+ (p-2)\frac{k_1}{N}\frac{y_2}{{x_{12}}^2}\right),\\
  r_{23}& = & \frac{1}{2 x_{23}}+ \frac{k_1}{4 N}\left(\frac{x_{23}}{{x_{12}}^2}- \frac{2}{x_{23}}\right) -(p-3)\frac{k}{4 N }\frac{1}{x_{23}}- \frac{{k}^2-1}{2 k N}\frac{x_2}{{x_{23}}^2}- \frac{1}{2 k N}\frac{y_2}{{x_{23}}^2}. 
\end{eqnarray}
\end{prop}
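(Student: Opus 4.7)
The plan is to establish the claimed equalities among Ricci components by a symmetry argument, and then compute the six representative components $rr_1^{},rr_2^{},r_1^{},r_2^{},r_{12},r_{23}$ directly from Lemma \ref{ric2} applied to the decomposition \eqref{decom_su(n)}, using the bracket table and the three structure-constant lemmas above.

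For the symmetry step, note that because $k_2=\cdots=k_p=k$, every permutation $\sigma\in S_{p-1}$ of the block indices $\{2,\ldots,p\}$ is implemented by conjugation by a suitable permutation matrix $P\in\SU(N)$ (possibly with a sign correction to lie in $\SU$), and $P$ normalizes $K$. The induced $\Ad(P)$ on $\fr{su}(N)$ permutes the summands $\fr{c}_j$, $\fr{m}_j$ (for $j\ge 2$), $\fr{m}_{1s}$ (for $s\ge 2$) and $\fr{m}_{rs}$ (for $2\le r<s\le p$) transitively within each family, while preserving every coefficient in \eqref{metric_g111}. Hence $\Ad(P)$ preserves the inner product defining $g$, so right translation by $P$ is an isometry of $(G,g)$. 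Equivariance of the Ricci tensor then yields the four equalities in the proposition.

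For the individual components we feed Lemma \ref{ric2} with data from Lemma \ref{brackets}: a triple ${k\brack j\,i}$ is nonzero only when the corresponding bracket relation permits it. The numerical values come from Lemma \ref{i rs} for triples of type $(\fr{c}_t,\fr{m}_{rs},\fr{m}_{rs})$, and from Lemma \ref{ijk} for those of types $(\fr{m}_{rs},\fr{m}_{rt},\fr{m}_{ts})$, $(\fr{m}_{ij},\fr{m}_i,\fr{m}_{ij})$ and $(\fr{m}_i,\fr{m}_i,\fr{m}_i)$. For the center components $rr_1^{}$ and $rr_2^{}$ only triples of the first type contribute; the aggregate identities of Lemma \ref{k k_1} show that the leading $\tfrac{1}{2y_t}$ of Lemma \ref{ric2} cancels against the subtraction sum, and the middle sum splits according to whether $r=1$ or $r\ge 2$, producing exactly the two fractions in the claimed formulas. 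The components $r_1^{}$ and $r_2^{}$ are handled in the same spirit, with the non-trivial triples coming from Lemma \ref{ijk}(2),(3).

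The most demanding cases are $r_{12}$ and $r_{23}$. For $r_{12}$ the contributing triples are $(\fr{m}_{12},\fr{m}_{1t},\fr{m}_{2t})$ for $t=3,\ldots,p$ (which is the source of the $(p-2)$ factor), together with $(\fr{m}_{12},\fr{m}_1,\fr{m}_{12})$, $(\fr{m}_{12},\fr{m}_2,\fr{m}_{12})$, and $(\fr{m}_{12},\fr{c}_t,\fr{m}_{12})$ for $t=1,\ldots,p-1$. For $r_{23}$ one meets in addition $(\fr{m}_{23},\fr{m}_{2t},\fr{m}_{3t})$ for $t=4,\ldots,p$ and $(\fr{m}_{23},\fr{m}_{12},\fr{m}_{13})$, together with contributions from $\fr{m}_2,\fr{m}_3$ and from every $\fr{c}_t$. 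The consolidated center sums of Lemma \ref{k k_1}(1)--(3) collapse the many partial contributions into the compact $y_1/x_{12}^2$, $y_2/x_{12}^2$ and $y_2/x_{23}^2$ terms. The main obstacle is therefore not any hard estimate but the combinatorial bookkeeping: correctly enumerating every triple allowed by Lemma \ref{brackets}, counting both orderings in the sums of Lemma \ref{ric2}, and consolidating via Lemma \ref{k k_1}. Once this is executed, the cancellations leading to the stated closed forms are mechanical.
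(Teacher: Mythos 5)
Your computational scheme — feeding Lemma~\ref{ric2} with the bracket table of Lemma~\ref{brackets} and the structure constants of Lemmas~\ref{i rs}, \ref{k k_1}, \ref{ijk}, with the $\tfrac{1}{2y_t}$ term cancelling against the subtraction sum for the center components — is the same mechanism the paper uses, except that the paper routes the center components through Lemma~\ref{r(H_i, H_j)} rather than Lemma~\ref{ric2}. (These give the same answer, but Lemma~\ref{ric2} is stated under the hypothesis that the summands are non-equivalent irreducibles, which the copies of the trivial module $\fr{c}_j$ violate; so if you quote Lemma~\ref{ric2} for $rr_t$ you must first secure $r(\fr{c}_i,\fr{c}_j)=0$, which the paper does via Lemma~\ref{consditions_r(H_i, H_j)=0} applied to the metric~(\ref{metric_g111}).) Where you genuinely depart from the paper is in the first claim: the paper obtains the equalities $rr_2=\cdots$, $r_2=\cdots$, $r_{1s}=r_{12}$, $r_{23}=r_{rs}$ only as a by-product of the explicit formulas, by observing that the sums in Lemma~\ref{k k_1} are independent of the free index. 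Your isometry argument via block permutations is a cleaner, more conceptual way to get those identifications, and it buys you the center diagonality for free.

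However, one step in that symmetry argument is stated incorrectly and needs patching. It is not true that $\Ad(P)$ permutes the one-dimensional summands $\fr{c}_j$: the vectors $H_j$ of~(\ref{basis}) for $j\ge 2$ are built from the cumulative sums $K_j$ and are not sent to each other by a block transposition (e.g.\ $\Ad(P)H_2$ has a nonzero entry in block $2$ of size $-\tfrac{\sqrt{-1}}{N-K_2}$, which is not of the form $H_m$). What is true is that $\Ad(P)$ fixes $H_1$ and, being $B$-orthogonal, preserves the $B$-orthogonal complement $\fr{c}_1^\perp\cap\fr{c}$, on which the metric~(\ref{metric_g111}) restricts to $y_2\,B$ and is therefore $\Ad(P)$-invariant — so $R_P$ is still an isometry. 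To conclude $rr_2=\cdots=rr_{p-1}$ (and also $r(H_i,H_j)=0$ on the center) you then need one more observation: the action of the block-permutation group $S_{p-1}$ on $\fr{c}_1^\perp\cap\fr{c}$ is the standard $(p-2)$-dimensional representation, which is irreducible over $\mathbb{R}$ and of real type, so any $S_{p-1}$-invariant symmetric bilinear form there (in particular $\Ric(g)$ restricted) is a scalar multiple of $B$. With that replacement your argument is sound; without it, the phrase ``$\Ad(P)$ permutes the $\fr{c}_j$'' is a gap, since the desired equality of the $rr_j$ would otherwise be unjustified. The remaining three equalities do follow directly from the fact that $\Ad(P)$ permutes the modules $\fr{m}_j$ $(j\ge 2)$, $\fr{m}_{1s}$ $(s\ge 2)$ and $\fr{m}_{rs}$ $(2\le r<s)$, as you say.
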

\begin{proof} From Lemma \ref{r(H_i, H_j)}, we see that, for $ 1 \leq i \leq p-1$, 
\begin{eqnarray*} rr_i= \frac{1}{4}\sum_{r < s}\frac{y_i}{{x_{r s}}^2} {\fr{m}_{r s} \brack {\fr{c}_{i} \ \fr{m}_{r s}}}.  
\end{eqnarray*} 
 For $i=1$,  we have, from  5) and 8) of Lemma \ref{i rs}, 
\begin{eqnarray*} rr_1= \frac{1}{4}\sum_{1 < s}\frac{y_1}{{x_{1 s}}^2} {\fr{m}_{1 s} \brack {\fr{c}_{1} \ \fr{m}_{1 s}}} = \frac{1}{4}\sum_{1 < s}\frac{y_1}{{x_{1 s}}^2} \frac{1}{p-1} = 
\frac{1}{4}\frac{y_1}{{x_{1 2}}^2} \frac{1}{p-1} (p-1)= \frac{1}{4}\frac{y_1}{{x_{1 2}}^2}. 
\end{eqnarray*} 
For $2 \leq i \leq p-1$,  from  (3)  of Lemma \ref{k k_1},  we have 
\begin{eqnarray*} && rr_i= 
\frac{1}{4}\sum_{r < s}\frac{y_i}{{x_{r s}}^2} {\fr{m}_{r s} \brack {\fr{c}_{i} \ \fr{m}_{r s}}} = \frac{1}{4}\left(\sum_{s>1}\frac{y_i}{{x_{1 s}}^2} {\fr{m}_{1 s} \brack {\fr{c}_{i} \ \fr{m}_{1 s}}}+ \sum_{2 \leq r < s}\frac{y_i}{{x_{r s}}^2} {\fr{m}_{r s} \brack {\fr{c}_{i} \ \fr{m}_{r s}}}\right)\\
&& =\frac{1}{4}\left(\frac{y_i}{{x_{1 2}}^2}\sum_{1 < s} {\fr{m}_{1 s} \brack {\fr{c}_{i} \ \fr{m}_{1 s}}}+ \frac{y_i}{{x_{23}}^2} \sum_{2 \leq r < s}{\fr{m}_{r s} \brack {\fr{c}_{i} \ \fr{m}_{r s}}}\right) = \frac{1}{4}\left( \frac{y_2}{{x_{1 2}}^2} \frac{k_1}{N} + \frac{y_2}{{x_{23}}^2} \frac{(p-1)k}{N}\right).  
\end{eqnarray*} 
We have 
\begin{eqnarray*}&& \hspace{-12pt} r_i= \frac{1}{2 x_i}+ \frac{1}{4 d_i} \left( \frac{1}{x_i}{\fr{m}_{i} \brack {\fr{m}_{i} \ \fr{m}_{i}}}+\sum_{s \neq i}\frac{x_i}{{x_{i s}}^2} {\fr{m}_{i} \brack {\fr{m}_{i s} \ \fr{m}_{i s}}}\right) - \frac{1}{2 d_i} \left( \frac{1}{x_i}{\fr{m}_{i} \brack {\fr{m}_{i} \ \fr{m}_{i}}}+\sum_{s \neq i}\frac{1}{x_i} {\fr{m}_{i} \brack {\fr{m}_{i s} \ \fr{m}_{i s}}}\right) \\
&& = \frac{1}{2 x_i} + \frac{1}{4 d_i} \left( -\frac{1}{x_i}{\fr{m}_{i} \brack {\fr{m}_{i} \ \fr{m}_{i}}}-\sum_{s \neq i}\frac{2}{x_i} {\fr{m}_{i} \brack {\fr{m}_{i s} \ \fr{m}_{i s}}}+ \sum_{s \neq i}\frac{x_i}{{x_{i s}}^2} {\fr{m}_{i} \brack {\fr{m}_{i s} \ \fr{m}_{i s}}}\right).  
\end{eqnarray*} 
For $i=1$, from Lemma \ref{ijk} (2) and (3), we see that 
\begin{eqnarray*}&&  r_1=\frac{1}{2 x_1} + \frac{1}{4( {k_1}^2 -1)} \left( -\frac{1}{x_1} \frac{k_1({k_1}^2-1)}{N}-\sum_{s > 1}\frac{2}{x_1} \frac{ k ({k_1}^2-1)}{N}+ \sum_{s >1}\frac{x_1}{{x_{1 s}}^2} \frac{ k ({k_1}^2-1)}{N}\right) \\ &&
=\frac{1}{4 x_1}\left( 2 -\frac{k_1+ 2 (p-1)k}{N }\right) + \frac{(p-1)k}{4 N}\frac{x_1}{{ x_{12}}^2} = \frac{k_1}{4 N} \frac{1}{x_1}+  \frac{(p-1)k}{4 N}\frac{x_1}{{ x_{12}}^2}. 
\end{eqnarray*}  
For  $i\geq 2$, from Lemma \ref{ijk} (2) and (3), we see that 
\begin{eqnarray*}&&  r_i=\frac{1}{2 x_i} + \frac{1}{4( {k}^2 -1)} \left( -\frac{1}{x_i} \frac{k({k}^2-1)}{N}-\frac{2}{x_i} \frac{ k_1 ({k}^2-1)}{N}+ \frac{x_i}{{x_{1 i}}^2} \frac{ k_1 ({k}^2-1)}{N}\right.  \\ && \left.
-\sum_{s \neq 1,  i}\frac{2}{x_i} \frac{ k ({k}^2-1)}{N}+ \sum_{s \neq  1, i}\frac{x_i}{{x_{i s}}^2} \frac{ k ({k}^2-1)}{N}\right)
 = \frac{k}{4N x_2} +\frac{k_1}{4N} \frac{x_2} {{x_{12} }^2}+ \frac{k}{4N} (p-2)\frac{x_2} {{x_{23} }^2}. 
\end{eqnarray*}
We have 
\begin{eqnarray*}&&  r_{i j} = \frac{1}{2 x_{i j}} + \frac{1}{2 d_{i j}}\sum_{t \neq i, j} {\fr{m}_{i j} \brack {\fr{m}_{i t} \ \fr{m}_{j t}}}\left( \frac{x_{i j}}{x_{i t} x_{t j} }-  \frac{x_{i t}}{x_{i j} x_{t j}} -  \frac{x_{t j}}{x_{i j} x_{i t}} \right) + \frac{1}{4 d_{i j}}\left( {\fr{m}_{i j} \brack {\fr{m}_{i } \ \fr{m}_{i j}}}\frac{1}{x_i} \times 2 \right. \\
&&\left.+ {\fr{m}_{i j} \brack {\fr{m}_{j } \ \fr{m}_{i j}}}\frac{1}{x_j} \times 2 + \sum_{\ell}{\fr{m}_{i j} \brack {\fr{c}_{\ell} \ \fr{m}_{i j}}}\frac{1}{y_\ell} \times 2 \right) - \frac{1}{2 d_{i j}}\left( {\fr{m}_{i j} \brack {\fr{m}_{i j } \ \fr{m}_{i}}}\frac{1}{x_i}+ {\fr{m}_{i } \brack {\fr{m}_{i j }  \fr{m}_{ij}}}\frac{x_i}{{x_{ij}}^2} \right. \\ &&
\left.+ {\fr{m}_{i j} \brack {\fr{m}_{i j } \ \fr{m}_{j}}}\frac{1}{x_j}+ {\fr{m}_{j } \brack {\fr{m}_{i j } \ \fr{m}_{ij}}}\frac{x_j}{{x_{ij}}^2}+ \sum_{\ell}{\fr{m}_{i j} \brack {\fr{c}_{\ell} \ \fr{m}_{i j}}}\frac{1}{y_\ell} + \sum_{\ell}{\fr{m}_{i j} \brack {\fr{c}_{\ell} \ \fr{m}_{i j}}}\frac{1}{y_\ell} \right). 
\end{eqnarray*}
For $i=1$, from Lemma \ref{ijk} (1), (2) and Lemma \ref{k k_1} (1),   we see that 
\begin{eqnarray*}&&  r_{1 j} =   \frac{1}{2 x_{1j}} + \frac{1}{4 k_1 k }\sum_{t \neq 1, j} {\fr{m}_{1 j} \brack {\fr{m}_{1 t} \ \fr{m}_{j t}}}\left( \frac{x_{1 j}}{x_{1 t} x_{t j} }-  \frac{x_{1 t}}{x_{1j} x_{t j}} -  \frac{x_{t j}}{x_{1 j} x_{1 t}} \right) + \frac{1}{8 k_1 k }\left( {\fr{m}_{1 j} \brack {\fr{m}_{1} \ \fr{m}_{1 j}}}\frac{1}{x_1} \times 2 \right. \\
&&\left.+ {\fr{m}_{1 j} \brack {\fr{m}_{j } \ \fr{m}_{1 j}}}\frac{1}{x_j} \times 2 + \sum_{\ell}{\fr{m}_{1 j} \brack {\fr{c}_{\ell} \ \fr{m}_{1 j}}}\frac{1}{y_\ell} \times 2 \right) - \frac{1}{4 k_1 k }\left( {\fr{m}_{1 j} \brack {\fr{m}_{1 j } \ \fr{m}_{1}}}\frac{1}{x_1}+ {\fr{m}_{1 } \brack {\fr{m}_{1 j }  \fr{m}_{1j}}}\frac{x_1}{{x_{1j}}^2} \right. \\ &&
\left.+ {\fr{m}_{1 j} \brack {\fr{m}_{1 j } \ \fr{m}_{j}}}\frac{1}{x_j}+ {\fr{m}_{j } \brack {\fr{m}_{1 j } \ \fr{m}_{1 j}}}\frac{x_j}{{x_{1 j}}^2}+ \sum_{\ell}{\fr{m}_{1 j} \brack {\fr{c}_{\ell} \ \fr{m}_{1 j}}}\frac{1}{y_\ell} + \sum_{\ell} {\fr{c}_{\ell} \brack {\fr{m}_{1 j}  \ \fr{m}_{1 j}}}\frac{y_\ell}{{x_{1 \ell}}^2} \right) \\
&&  = \frac{1}{2 x_{12}} -(p-2)\frac{ k}{ 4N}\frac{x_{23}}{{x_{12}}^2}
\\&& - \left( \frac{{k_1}^2-1}{4 k_1 N}\frac{x_1}{{x_{12}}^2}+  \frac{ {k}^2-1}{ 4 k N}\frac{x_2}{{x_{1 2}}^2} +\frac{1}{4 k_1 k (p-1)}\frac{y_1}{{x_{12}}^2}
+ \frac{(p-2)}{ 4  k (p-1) N} \frac{y_2}{{x_{12}}^2}\right). 
\end{eqnarray*}
For $2 \leq i < j$, from Lemma \ref{ijk} (1), (2) and Lemma \ref{k k_1} (2),   we see that 
\begin{eqnarray*}&&  r_{i j} =   \frac{1}{2 x_{i j}} + \frac{1}{4 k^2 }\sum_{t \neq i, j} {\fr{m}_{i j} \brack {\fr{m}_{i t} \ \fr{m}_{j t}}}\left( \frac{x_{i j}}{x_{i t} x_{t j} }-  \frac{x_{i t}}{x_{i j} x_{t j}} -  \frac{x_{t j}}{x_{i j} x_{i t}} \right) + \frac{1}{8  k^2 }\left( {\fr{m}_{i j} \brack {\fr{m}_{i} \ \fr{m}_{i j}}}\frac{1}{x_i} \times 2 \right. \\
&&\left.+ {\fr{m}_{i j} \brack {\fr{m}_{j } \ \fr{m}_{i j}}}\frac{1}{x_j} \times 2 + \sum_{\ell}{\fr{m}_{i j} \brack {\fr{c}_{\ell} \ \fr{m}_{i j}}}\frac{1}{y_\ell} \times 2 \right) - \frac{1}{4  k^2 }\left( {\fr{m}_{i j} \brack {\fr{m}_{i j } \ \fr{m}_{i}}}\frac{1}{x_i}+ {\fr{m}_{i } \brack {\fr{m}_{i j }  \fr{m}_{i j}}}\frac{x_i}{{x_{i j}}^2} \right. \\ &&
\left.+ {\fr{m}_{i j} \brack {\fr{m}_{i j } \ \fr{m}_{j}}}\frac{1}{x_j}+ {\fr{m}_{j } \brack {\fr{m}_{i j } \ \fr{m}_{i j}}}\frac{x_j}{{x_{i j}}^2}+ \sum_{\ell}{\fr{m}_{i j} \brack {\fr{c}_{\ell} \ \fr{m}_{i j}}}\frac{1}{y_\ell} + \sum_{\ell} {\fr{c}_{\ell} \brack {\fr{m}_{i j}  \ \fr{m}_{i j}}}\frac{y_\ell}{{x_{i \ell}}^2} \right) \\
&&  = \frac{1}{2 x_{23}} +\frac{ k_1}{ 4N}\left( \frac{x_{23}}{{x_{12}}^2} - \frac{2}{x_{23}}\right)
- \frac{k}{4 N}(p-3)\frac{1}{x_{23}} - \frac{ {k}^2-1}{ 2 k N}\frac{x_2}{{x_{23}} ^2}- \frac{1}{ 2 k  N} \frac{y_2}{{x_{23}}^2}. 
\end{eqnarray*}
\end{proof}

\section{Einstein metrics on the compact Lie group $\SU(k_1+ ( p -1)k)$}


To find 
 Einstein metrics on $\SU(k_1+ ( p -1)k)$ we need to solve the system
(cf. Proposition \ref{ric_component})
\begin{eqnarray}\label{system1}
& & r_{12} - r_{23} = 0,\  r_{23} - r_2 = 0, \  r_1 - r_2 = 0,\nonumber \
 r_2 - rr_1 = 0, \ rr_1 - rr_2= 0.
\end{eqnarray}
We normalize the  variables  of the equations obtained from Proposition \ref{ric_component} by setting $ x_{23} =1$. Then we have the following:
\begin{eqnarray}
  && f_1 = -k^2 {k_{1}} (p-1) (p+1)
   {x_{12}}^2+2 (k-1) (k+1) {k_{1}}
   (p-1) {x_{12}}^2 {x_{2}} \nonumber \\
   &&  \quad +2 k {k_{1}}
   (p-1) {x_{12}} (k p-k+{k_{1}})
   -k ({k_{1}}-1) ({k_{1}}+1)
   (p-1) {x_{1}}  \nonumber \\
   &&  \quad  -(k-1)
   (k+1) {k_{1}} (p-1) {x_{2}}-{y_{1}} (k
   p-k+{k_{1}}) \nonumber 
   \\ &&  \quad -k {k_{1}} (p-1) (k p-2 k+{k_{1}}) 
   +2 {k_{1}} (p-1) {x_{12}}^2 {y_{2}}-{k_{1}} (p-2) {y_{2}} =0, \label{eq24}\\
  && f_2 =  -{x_{12}}^2 {x_{2}}^2
   \left(k^2 p-2\right)+k^2 (p+1) {x_{12}}^2
   {x_{2}} 
  \nonumber \\  &&  \quad  
   -k^2 {x_{12}}^2-k {k_{1}}
   {x_{2}}^2+k {k_{1}} {x_{2}}-2
   {x_{12}}^2 {x_{2}} {y_{2}}=0, \label{eq25}\\
&&  f_3 =  k (p-1) {x_{1}}^2 {x_{2}}-k (p-2) {x_{1}} {x_{12}}^2 {x_{2}}^2-k {x_{1}} {x_{12}}^2 
    -{k_{1}} {x_{1}} {x_{2}}^2+{k_{1}} {x_{12}}^2 {x_{2}}=0, \label{eq26}\\
&&  f_4 = -{x_{2}} {y_{1}} (k p-k+{k_{1}})+k (p-2) {x_{12}}^2
   {x_{2}}^2+k {x_{12}}^2+{k_{1}}
   {x_{2}}^2=0, \label{eq27}\\
 && f_5 =  {y_{1}} (k p-k+{k_{1}})-k
   (p-1) {x_{12}}^2 {y_{2}}-{k_{1}}
   {y_{2}}=0.\label{eq28}
     \end{eqnarray}

Solving equations (\ref{eq27}) and (\ref{eq28}) for ${y_{1}}$ and ${y_{2}}$,  
we obtain 
\begin{eqnarray}
{y_1}= \frac{k(p-2) {x_{12}}^2 \, {x_{2}}^2+k
   {x_{12}}^2+{k_{1}}
   {x_{2}}^2}{x_{2} (k\,( p-1)+{k_{1}})}, \label{eq29} \\
 {y_2}= \frac{k (p-2) {x_{12}}^2
   {x_{2}}^2+k\, {x_{12}}^2+{k_{1}}
   {x_{2}}^2}{{x_{2}} \left(k \, (p-1)\, {x_{12}}^2+{k_{1}}\right)}. \label{eq30}
\end{eqnarray}
By substituting (\ref{eq29}) and (\ref{eq30}) for $y_1$ and $y_2$ 
in the  equations (\ref{eq24}) and (\ref{eq25}),  we obtain the equations  
\begin{eqnarray}
&& F_1 ={x_2}^2 \Big(-\left({x_{12}}^4
   \left(2 k^2{k_1} (p-1)-k (p-2)-2{k_1}\right)\right)+k
   {k_1}^2+k {k_1} {x_{12}}^2 (k( p-1)-2 {k_1})\Big) \nonumber \\
   &&
 +{x_2} \left(k(p-1) {x_{12}}^2+{k_1}\right)
   \Big(k {k_1} (p+1) {x_{12}}^2-2 {k_1} {x_{12}} (k(p-1)+{k_1}) +{k_1} (k (p-2)+{k_1}) \nonumber\\
   &&+({k_1}^2-1) {x_1}\Big)
 +{x_{12}}^2\left({x_{12}}^2 (k-2{k_1})+{k_1}\right)=0, \label{eq31}\\
 && F_2= ({x_2}-1) \Big({x_2}
   \big({x_{12}}^4 \left(k^2
   (p-1) p-2\right)+k {k_1} (2p-1)
   {x_{12}}^2+{k_1}^2\big) \nonumber \\
&&   - {x_{12}}^2 \left({x_{12}}^2
   \left(k^2 (p-1)+2\right)+k
   {k_1}\right)\Big) =0, \label{eq32}
\end{eqnarray}
by taking numerator of the equations.  Now we consider two cases.

\smallskip 
\noindent  
{\bf Case 1).}  \quad If $x_2=1$, we have 
\begin{eqnarray*}
{y_1} = \frac{k(p-1) {x_{12}}^2+{k_{1}}}
   {k ( p-1)+{k_{1}}},  \quad
   {y_2}= 1. 
\end{eqnarray*}
Thus the metrics are $\Ad(\s(\U(k_1)\times\U((p-1)k))$-invariant. 

From (\ref{eq26}), we see that,
$$\left({x_1}-{x_{12}}^2\right) ({k_1}-k(p-1){x_1})=0.$$

If ${x_1} = {x_{12}}^2$, from (\ref{eq31}),  we have  
$${k_1} ({x_{12}}-1)^2 (k (p-1)+{k_1}) \left(k (p-1) {x_{12}}^2+{k_1}\right) =0. $$
   Thus we see $ x_{12} =1$ and ${y_1} =1$, and hence the metric is bi-invariant. 

If $\displaystyle {x_1} = {k_1}/({k(p-1)})$, from (\ref{eq31}),  we see that $x_{12}$ is a solution of the quadratic equation $Q_1(x_{12})=0$, where 
\begin{eqnarray*}
&& Q_1(x_{12}) = k (p-1)\big(k{k_1} (p-1)+1\big) {x_{12}}^2 -2 k {k_1} (p-1) \big( {k_1}+k (p-1)\big){x_{12}}\\
&& +{k_1} \left(k^2 (p-1)^2+k{k_1} (p-1)+{k_1}^2-1\right). 
\end{eqnarray*}
Note that 
\begin{eqnarray*}
&& Q_1((k1 ( k1 + k( p-1)))/( k k1 (p-1)+1) \\
&& = - \frac{( k_1-1) k_1 (  k_1+1) ( k( p-1)-1) (k( p-1)+1)}{(1 + k k1 (p-1))} < 0 
\end{eqnarray*}
and 
$Q_1(0) = {k_1} \left(k^2 (p-1)^2+k{k_1} (p-1)+{k_1}^2-1\right) >0$. 
Thus the quadratic equation $Q_1(x_{12})=0$ has positive solutions, and hence there are two Einstein metrics. 

\smallskip 

\noindent
{\bf Case 2).} \quad  If $x_2 \neq1$, from (\ref{eq32}),   we have 
 \begin{eqnarray}
 x_2 = \frac{ {x_{12}}^2 \left({x_{12}}^2
   \left(k^2 (p-1)+2\right)+k {k_1}\right)}{{x_{12}}^4 \left(k^2
   (p-1) p-2\right)+k {k_1} (2p-1) {x_{12}}^2+{k_1}^2}. \label{eq33}
\end{eqnarray}
Note that, for a real value $ x_{12}$,  the  value $x_2$ is positive by the equation above. 
Note also that, for $k  \geq 2$ and $p \geq 3$,  
\begin{eqnarray}
x_2 < 1. 
\end{eqnarray}
In fact, we see that 
$$ 1- x_2= \frac{\left({x_{12}}^2 (k
   (p-1)-2)+k_{1}\right)
   \left({x_{12}}^2 (k
   (p-1)+2)+k_{1}\right)}{{x_{12}}^4 \left(k^2 (p-1)
   p-2\right)+k k_{1} (2 p-1)
   {x_{12}}^2+{k_{1}}^2} > 0.$$

From (\ref{eq31}), we have 
 \begin{eqnarray} 
&& {x_1} = 
  \big(-{x_2}^2
   \left(-\left({x_{12}}^4 \left(-2
   \left(k^2+1\right) {k_1}+k p(2 k{k_ 1}-1)+2 k\right)\right)+k
   {k_ 1}^2 \right. \nonumber \\
&&   \left.+k {k_1} {x_{12}}^2 (k p-k-2 {k_1})\right)-k {k_1} (p+1)
   {x_{12}}^2 {x_2} \left(k (p-1)
   {x_{12}}^2+{k_1}\right) \nonumber \\
   && +2 {k_1}
   {x_{12}}{x_2}(k (p-1)+{k_1})
   \left(k(p-1){x_{12}}^2+{k_1}\right) \nonumber \\ 
   && -{k_1} {x_2}(k(p-2)+{k_1})\left(k(p-1){x_{12}}^2+{k_1}\right)+
   {x_{12}}^2 \left(-\left({x_{12}}^2(k-2{k_1})\right)
   -{k_1}\right)\big)/\nonumber  \\
   &&  \left(({k_1}^2-1){x_2}\left(k(p-1){x_{12}}^2+{k_1}\right) \right). \label{eq34}
 \end{eqnarray}
 Thus we see that, for a real value $ x_{12}$, so  is  $x_1$. 
 
 Now we claim that, for the real solutions $x_{12}$ and $ x_2 > 0$, if 
 the quadratic equation  (\ref{eq26}) with respect to $x_1$ has  real solutions, then these  are  positive. 
 Indeed, we have 
  \begin{eqnarray*} && f_3= k (p-1) {x_{2}} \Big({x_{1}} - \frac{k (p-2) {x_{12}}^2 {x_{2}}^2 +k  {x_{12}}^2 +{k_{1}}  {x_{2}}^2}{ 2 k (p-1) {x_{2}}} \Big)^2  \\ &&+{k_{1}} {x_{12}}^2 {x_{2}} - \frac{\left(k (p-2) {x_{12}}^2 {x_{2}}^2 +k  {x_{12}}^2 
+{k_{1}}  {x_{2}}^2\right)^2}{4k (p-1) {x_{2}} }, 
  \end{eqnarray*} 
   which is quadratic for $x_1$.
 The value of $f_3 $ at $x_1=0$ is given by ${k_{1}} {x_{12}}^2 {x_{2}} >0$ and 
 $\displaystyle \frac{k (p-2) {x_{12}}^2 {x_{2}}^2 +k  {x_{12}}^2 +{k_{1}}  {x_{2}}^2}{ 2 k (p-1) {x_{2}}}  >0$ so the claim follows.

 Now we need to show that $x_{12}>0$. 
  By substituting  (\ref{eq33}) for $x_2$ and (\ref{eq34})  for $x_1$ 
in the  equations   (\ref{eq26}),  we obtain equations  
$$F_3(x_{12}) = \sum_{\ell=0}^{16} a_\ell^{} {x_{12}}^{\ell}_{} =0$$
where $a_\ell$ are given as follows:
   \begin{eqnarray*} 
&&   a_0=k {k_1}^7 \left( k^2(p-2)+{k_1} k+1\right)
   \left(k^2 (p-2) (p-1)+k {k_1}
   (p-1)+ {k_1}^2+p-2\right),
   \\
   &&
 a_1=  -2 k^2
   {k_1}^7 (p k-k+{k_1})
   \left(2 p^2 k^2-6 p k^2+4 k^2-2
   {k_1} k+2 {k_1} p
   k+{k_1}^2+2 p-3\right),
 \\   && a_2 = k^2
   {k_1}^6 \big(6 p^4 k^4-34
   p^3 k^4+70 p^2 k^4-62 p k^4+20
   k^4+18 {k_1} p^3 k^3-60
   {k_1} p^2 k^3  -18 {k_1} k^3
  \\
   && +60 {k_1} p k^3+16 p^3
   k^2+17 {k_1}^2 k^2+23
   {k_1}^2 p^2 k^2-67 p^2 k^2-43
   {k_1}^2 p k^2+87 p k^2 -33 k^2-6 {k_1}^3 k
    \\
   &&+18 {k_1}
   p^2 k+12 {k_1} k+12
   {k_1}^3 p k-36 {k_1} p
   k+{k_1}^4-7 {k_1}^2+10
   p^2+9 {k_1}^2 p-27
   p+14\big), \\
  && 
  a_ 3 = -2 k {k_1}^6 (p
   k-k+{k_1}) \big(12 p^3
   k^4-44 p^2 k^4+50 p k^4-18 k^4+14
   {k_1} p^2 k^3+6 {k_1}
   k^3 \\
   &&
   -20 {k_1} p k^3-3
   {k_1}^2 k^2+20 p^2 k^2+7
   {k_1}^2 p k^2-47 p k^2+23
   k^2-4 {k_1} k+4 {k_1} p
   k+2 {k_1}^2+4 p-6\big), \\
   &&
  a_4= k {k_1}^5 \big(15 p^5 k^6-95
   p^4 k^6+236 p^3 k^6-288 p^2
   k^6+173 p k^6-41 k^6+66 {k_1}
   p^4 k^5\\
   &&
-250 {k_1} p^3 k^5+324
   {k_1} p^2 k^5+22 {k_1}
   k^5-162 {k_1} p k^5+50 p^4
   k^4+97 {k_1}^2 p^3 k^4-245
   p^3 k^4\\
   &&
-30 {k_1}^2 k^4-230
   {k_1}^2 p^2 k^4+436 p^2
   k^4+163 {k_1}^2 p k^4-336 p
   k^4+95 k^4+11 {k_1}^3 k^3+82
   {k_1} p^3 k^3\\
   &&
+52 {k_1}^3
   p^2 k^3-212 {k_1} p^2 k^3-35
   {k_1} k^3-54 {k_1}^3 p
   k^3+156 {k_1} p k^3-4
   {k_1}^4 k^2+39 p^3 k^2+52
   {k_1}^2 k^2\\
   &&
+65 {k_1}^2
   p^2 k^2-142 p^2 k^2+6 {k_1}^4
   p k^2-111 {k_1}^2 p k^2+165 p
   k^2-70 k^2-16 {k_1}^3 k+8
   {k_1} k\\
   &&
+16 {k_1}^3 p k-8
   {k_1} p k+4 {k_1}^4-12
   {k_1}^2-8 p+16\big),
   \\
   &&
  a_5= -2 k^2
   {k_1}^5 (p k-k+{k_1})
   \big(30 p^4 k^4-130 p^3 k^4+202
   p^2 k^4-134 p k^4+32 k^4+42
   {k_1} p^3 k^3\\
   &&
-78 {k_1}
   p^2 k^3+36 {k_1} p k^3+70 p^3
   k^2+21 {k_1}^2 p^2 k^2-221
   p^2 k^2-18 {k_1}^2 p k^2+224
   p k^2-76 k^2\\
   &&
+24 {k_1} p^2
   k+12 {k_1} k-36 {k_1} p
   k-6 {k_1}^2+28 p^2+12
   {k_1}^2 p-72
   p+38\big),
  \\
   &&
a_6=
   {k_1}^4 \big(20
   p^6 k^8-140 p^5 k^8+404 p^4
   k^8-616 p^3 k^8+524 p^2 k^8-236 p
   k^8+44 k^8+130 {k_1} p^5
   k^7\\
   &&
-560 {k_1} p^4 k^7+884
   {k_1} p^3 k^7-592 {k_1}
   p^2 k^7+16 {k_1} k^7+122
   {k_1} p k^7+80 p^5 k^6+211
   {k_1}^2 p^4 k^6\\
   &&
-449 p^4
   k^6-613 {k_1}^2 p^3 k^6+1013
   p^3 k^6+12 {k_1}^2 k^6+605
   {k_1}^2 p^2 k^6-1163 p^2
   k^6-215 {k_1}^2 p k^6\\
   &&
+683 p
   k^6-164 k^6+210 {k_1} p^4
   k^5-8 {k_1}^3 k^5+116
   {k_1}^3 p^3 k^5-688 {k_1}
   p^3 k^5-182 {k_1}^3 p^2
   k^5
\\
   &&
+774 {k_1} p^2 k^5+24
   {k_1} k^5+74 {k_1}^3 p
   k^5-320 {k_1} p k^5+6
   {k_1}^4 k^4+76 p^4 k^4+225
   {k_1}^2 p^3 k^4-359 p^3
   k^4\\
   &&
-112 {k_1}^2 k^4+15
   {k_1}^4 p^2 k^4-577
   {k_1}^2 p^2 k^4+668 p^2
   k^4-20 {k_1}^4 p k^4+491
   {k_1}^2 p k^4-587 p k^4\\
   &&
+174
   k^4+48 {k_1}^3 k^3+16
   {k_1} p^3 k^3+80 {k_1}^3
   p^2 k^3-88 {k_1} p^2 k^3+8
   {k_1} k^3-144 {k_1}^3 p
   k^3+80 {k_1} p k^3\\
   &&
-16 {k_1}^4 k^2+52 {k_1}^2
   k^2+32 {k_1}^2 p^2 k^2-56 p^2
   k^2+20 {k_1}^4 p k^2-116
   {k_1}^2 p k^2+144 p k^2-60
   k^2\\
   &&
-8 {k_1}^3 k-16 {k_1}
   k+16 {k_1}^3 p k+8 {k_1}
   p k+4 {k_1}^4-4
   {k_1}^2\big),
        \end{eqnarray*} 
        \newpage
   \begin{eqnarray*}
   &&
a_7=-2 k
   {k_1}^4 (p k-k+{k_1})
   \big(40 p^5 k^6-200 p^4 k^6+388
   p^3 k^6-364 p^2 k^6+164 p k^6-28
   k^6\\
   &&
+70 {k_1} p^4 k^5-160
   {k_1} p^3 k^5+90 {k_1}
   p^2 k^5-20 {k_1} k^5+20
   {k_1} p k^5+120 p^4 k^4+35
   {k_1}^2 p^3 k^4\\
   && -473 p^3 k^4+10 {k_1}^2 k^4-45
   {k_1}^2 p^2 k^4+711 p^2
   k^4-484 p k^4+126 k^4+60
   {k_1} p^3 k^3-120 {k_1}
   p^2 k^3\\
   &&
+16 {k_1} k^3+44
   {k_1} p k^3+72 p^3 k^2-8
   {k_1}^2 k^2+30 {k_1}^2
   p^2 k^2-274 p^2 k^2-30
   {k_1}^2 p k^2+330 p k^2\\
   &&
-120 k^2-24 p+24\big),
 \\   &&
a_8= k {k_1}^3
   \big(15 p^7 k^8-115 p^6 k^8+376
   p^5 k^8-680 p^4 k^8+735 p^3
   k^8-475 p^2 k^8+170 p k^8-26
   k^8\\
   &&
+150 {k_1} p^6 k^7-730
   {k_1} p^5 k^7+1356 {k_1}
   p^4 k^7-1124 {k_1} p^3
   k^7+286 {k_1} p^2 k^7-64
   {k_1} k^7+126 {k_1} p
   k^7
   \\
    &&
+70 p^6 k^6+265 {k_1}^2
   p^5 k^6-441 p^5 k^6-910
   {k_1}^2 p^4 k^6+1198 p^4
   k^6+1127 {k_1}^2 p^3 k^6-1812
   p^3 k^6\\
   &&
+13 {k_1}^2 k^6-571
   {k_1}^2 p^2 k^6+1608 p^2
   k^6+76 {k_1}^2 p k^6-787 p
   k^6+164 k^6+310 {k_1} p^5
   k^5  \\
   &&
+150 {k_1}^3 p^4 k^5-1246
   {k_1} p^4 k^5+11 {k_1}^3
   k^5-310 {k_1}^3 p^3 k^5+1870
   {k_1} p^3 k^5+181 {k_1}^3
   p^2 k^5
  \\  &&
-1135 {k_1} p^2
   k^5+107 {k_1} k^5-32
   {k_1}^3 p k^5+94 {k_1} p
   k^5+79 p^5 k^4-4 {k_1}^4
   k^4+395 {k_1}^2 p^4 k^4\\
   &&
-462
   p^4 k^4+20 {k_1}^4 p^3
   k^4-1337 {k_1}^2 p^3 k^4+1182
   p^3 k^4+69 {k_1}^2 k^4-40
   {k_1}^4 p^2 k^4+1619
   {k_1}^2 p^2 k^4 \\
   &&
 -1596 p^2
   k^4+24 {k_1}^4 p k^4-746
   {k_1}^2 p k^4+1051 p k^4-254
   k^4+64 {k_1} p^4 k^3-64
   {k_1}^3 k^3\\
   &&
+160 {k_1}^3 p^3 k^3-368 {k_1} p^3 k^3-416
   {k_1}^3 p^2 k^3+480 {k_1}
   p^2 k^3-64 {k_1} k^3+272
   {k_1}^3 p k^3-64 {k_1} p
   k^3\\
   &&
+24 {k_1}^4 k^2+128
   {k_1}^2 p^3 k^2-136 p^3
   k^2-160 {k_1}^2 k^2+40
   {k_1}^4 p^2 k^2-528
   {k_1}^2 p^2 k^2+456 p^2
   k^2\\
   &&
-64 {k_1}^4 p k^2+544
   {k_1}^2 p k^2-448 p k^2+144
   k^2+80 {k_1}^3 k+64
   {k_1}^3 p^2 k+48 {k_1}
   p^2 k+48 {k_1} k\\
   &&
-120
   {k_1}^3 p k-120 {k_1} p
   k-24 {k_1}^4+32
   {k_1}^2+16 {k_1}^4 p-8
   {k_1}^2 p+16 p-32\big),
  \\
   &&
a_9=  -2
   {k_1}^3 (p k-k+{k_1})
   \big(30 p^6 k^8-170 p^5 k^8+392
   p^4 k^8-468 p^3 k^8+302 p^2
   k^8-98 p k^8\\
   &&
+12 k^8+70 {k_1}
   p^5 k^7-190 {k_1} p^4 k^7+120
   {k_1} p^3 k^7+80 {k_1}
   p^2 k^7+30 {k_1} k^7-110
   {k_1} p k^7+110 p^5 k^6\\
   &&
+35
   {k_1}^2 p^4 k^6-517 p^4
   k^6-60 {k_1}^2 p^3 k^6+1016
   p^3 k^6-15 {k_1}^2 k^6-1028
   p^2 k^6+40 {k_1}^2 p k^6\\
   &&
+526
   p k^6-107 k^6+80 {k_1} p^4
   k^5-200 {k_1} p^3 k^5+56
   {k_1} p^2 k^5-88 {k_1}
   k^5+152 {k_1} p k^5+88 p^4
   k^4\\
   &&
+40 {k_1}^2 p^3 k^4-448
   p^3 k^4+44 {k_1}^2 k^4-60
   {k_1}^2 p^2 k^4+836 p^2
   k^4-32 {k_1}^2 p k^4-664 p
   k^4+196 k^4\\
   &&
+48 {k_1} k^3-48
   {k_1} p k^3-24 {k_1}^2
   k^2-104 p^2 k^2+208 p k^2-80
   k^2-16 {k_1} k+16 {k_1} p
   k+8
   {k_1}^2-8\big),
   \\
   &&
a_{10}= {k_1}^2
   \big(6 p^8 k^{10}-50 p^7
   k^{10}+182 p^6 k^{10}-378 p^5
   k^{10}+490 p^4 k^{10}-406 p^3
   k^{10}+210 p^2 k^{10}\\
   &&
-62 p
   k^{10}+8 k^{10}+102 {k_1} p^7
   k^9-556 {k_1} p^6 k^9+1188
   {k_1} p^5 k^9-1170 {k_1}
   p^4 k^9+350 {k_1} p^3 k^9\\
   &&
+288
   {k_1} p^2 k^9+62 {k_1}
   k^9-264 {k_1} p k^9+32 p^7
   k^8+197 {k_1}^2 p^6 k^8-221
   p^6 k^8-773 {k_1}^2 p^5
   k^8\\
   &&
+697 p^5 k^8+1123 {k_1}^2
   p^4 k^8-1307 p^4 k^8-702
   {k_1}^2 p^3 k^8+1558 p^3
   k^8+{k_1}^2 k^8+143
   {k_1}^2 p^2 k^8 \\
   &&
-1159 p^2
   k^8+11 {k_1}^2 p k^8+489 p
   k^8-89 k^8+262 {k_1} p^6
   k^7+116 {k_1}^3 p^5 k^7-1252
   {k_1} p^5 k^7\\
   &&
-290 {k_1}^3
   p^4 k^7+2344 {k_1} p^4 k^7-30
   {k_1}^3 k^7+204 {k_1}^3
   p^3 k^7-1904 {k_1} p^3 k^7-32
   {k_1}^3 p^2 k^7
 \\
   &&
+238 {k_1}
   p^2 k^7-236 {k_1} k^7+32
   {k_1}^3 p k^7+548 {k_1} p
   k^7+42 p^6 k^6+371 {k_1}^2
   p^5 k^6-289 p^5 k^6
\\
   &&
+{k_1}^4
   k^6+15 {k_1}^4 p^4 k^6-1553
   {k_1}^2 p^4 k^6+938 p^4
   k^6-40 {k_1}^4 p^3 k^6+2414
   {k_1}^2 p^3 k^6-1734 p^3
   k^6\\
   &&
+{k_1}^2 k^6+36
   {k_1}^4 p^2 k^6-1652
   {k_1}^2 p^2 k^6+1826 p^2
   k^6-12 {k_1}^4 p k^6+419
   {k_1}^2 p k^6-1009 p k^6 
          \end{eqnarray*} 
   \begin{eqnarray*}
   &&
+226
   k^6+96 {k_1} p^5 k^5+160
   {k_1}^3 p^4 k^5-656 {k_1}
   p^4 k^5+96 {k_1}^3 k^5-544
   {k_1}^3 p^3 k^5+1200
   {k_1} p^3 k^5 
   \\
    &&
+512 {k_1}^3
   p^2 k^5-592 {k_1} p^2 k^5+192
   {k_1} k^5-224 {k_1}^3 p
   k^5-240 {k_1} p k^5-16
   {k_1}^4 k^4+192 {k_1}^2
   p^4 k^4 
   \\
     &&
-136 p^4 k^4+40
   {k_1}^4 p^3 k^4-1000
   {k_1}^2 p^3 k^4+576 p^3
   k^4+176 {k_1}^2 k^4-96
   {k_1}^4 p^2 k^4+1616
   {k_1}^2 p^2 k^4\\
   &&
-960 p^2
   k^4+72 {k_1}^4 p k^4-1064
   {k_1}^2 p k^4+784 p k^4-184
   k^4-176 {k_1}^3 k^3+96
   {k_1}^3 p^3 k^3\\
   &&
+32 {k_1}
   p^3 k^3-304 {k_1}^3 p^2
   k^3-112 {k_1} p^2 k^3-96
   {k_1} k^3+384 {k_1}^3 p
   k^3+176 {k_1} p k^3+40
   {k_1}^4 k^2\\
   &&
-120 {k_1}^2
   k^2+24 {k_1}^4 p^2 k^2-128
   {k_1}^2 p^2 k^2+80 p^2 k^2-72
   {k_1}^4 p k^2+336 {k_1}^2
   p k^2-208 p k^2+48 k^2\\
   &&
+64
   {k_1}^3 k+32 {k_1} k-64
   {k_1}^3 p k-32 {k_1} p
   k-16 {k_1}^4+16
   {k_1}^2\big), \\
       &&
a_{11} = -2 k
   {k_1}^2 (p-1) (p
   k-k+{k_1}) \big(12 p^6
   k^8-64 p^5 k^8+138 p^4 k^8-152
   p^3 k^8+88 p^2 k^8  \\
 &&
-24 p k^8 +2 k^8 +42 {k_1} p^5 k^7-90
   {k_1} p^4 k^7+120 {k_1}
   p^2 k^7+18 {k_1} k^7-90
   {k_1} p k^7+52 p^5 k^6 \\
 &&
+21
   {k_1}^2 p^4 k^6-229 p^4
   k^6-24 {k_1}^2 p^3 k^6+448
   p^3 k^6-9 {k_1}^2 k^6-24
   {k_1}^2 p^2 k^6-460 p^2
   k^6 \\
 &&
+36 {k_1}^2 p k^6+232 p
   k^6-43 k^6+60 {k_1} p^4
   k^5-120 {k_1} p^3 k^5-96
   {k_1} p^2 k^5-108 {k_1}
   k^5+264 {k_1} p k^5 
   \\
 &&
+52 p^4
   k^4+30 {k_1}^2 p^3 k^4-278
   p^3 k^4+54 {k_1}^2 k^4-30
   {k_1}^2 p^2 k^4+550 p^2
   k^4-78 {k_1}^2 p k^4-474 p
   k^4 \\
 &&
+158 k^4+144 {k_1} k^3-144
   {k_1} p k^3-72 {k_1}^2
   k^2-136 p^2 k^2+272 p k^2-144
   k^2-48 {k_1} k+48 {k_1} p
   k \\
 &&
+24 {k_1}^2+24\big),
   \\
   &&
a_{12}={k_1}
   \big(p^9 k^{11}-9 p^8 k^{11}+36
   p^7 k^{11}-84 p^6 k^{11}+126 p^5
   k^{11}-126 p^4 k^{11}+84 p^3
   k^{11}-36 p^2 k^{11} \\
 &&
+9 p
   k^{11}-k^{11}+38 {k_1} p^8
   k^{10}-230 {k_1} p^7
   k^{10}+556 {k_1} p^6
   k^{10}-634 {k_1} p^5
   k^{10}+220 {k_1} p^4
   k^{10} \\
 &&
+262 {k_1} p^3
   k^{10}-340 {k_1} p^2
   k^{10}-26 {k_1} k^{10}+154
   {k_1} p k^{10}+6 p^8 k^9+83
   {k_1}^2 p^7 k^9-43 p^7
   k^9 \\
 &&
-358 {k_1}^2 p^6 k^9+152
   p^6 k^9+569 {k_1}^2 p^5
   k^9-348 p^5 k^9-380 {k_1}^2
   p^4 k^9+545 p^4 k^9+85
   {k_1}^2 p^3 k^9 \\
 &&
-571 p^3
   k^9-24 {k_1}^2 k^9-38
   {k_1}^2 p^2 k^9+378 p^2
   k^9+63 {k_1}^2 p k^9-142 p
   k^9+23 k^9+118 {k_1} p^7
   k^8 \\
 &&
+52 {k_1}^3 p^6 k^8-652
   {k_1} p^6 k^8-146 {k_1}^3
   p^5 k^8+1464 {k_1} p^5
   k^8+101 {k_1}^3 p^4 k^8-1513
   {k_1} p^4 k^8 \\
 &&
+37 {k_1}^3
   k^8+16 {k_1}^3 p^3 k^8+322
   {k_1} p^3 k^8+26 {k_1}^3
   p^2 k^8+774 {k_1} p^2 k^8+183
   {k_1} k^8-86 {k_1}^3 p
   k^8 \\
 &&
-696 {k_1} p k^8+9 p^7
   k^7+179 {k_1}^2 p^6 k^7-66
   p^6 k^7+6 {k_1}^4 p^5 k^7-889
   {k_1}^2 p^5 k^7+261 p^5
   k^7 \\
 &&
-20 {k_1}^4 p^4 k^7+1658
   {k_1}^2 p^4 k^7-654 p^4
   k^7+24 {k_1}^4 p^3 k^7-1472
   {k_1}^2 p^3 k^7+1027 p^3
   k^7+42 {k_1}^2 k^7 \\
 &&
-12 {k_1}^4 p^2 k^7+673
   {k_1}^2 p^2 k^7-966 p^2 k^7+2
   {k_1}^4 p k^7-191 {k_1}^2
   p k^7+495 p k^7-106 k^7 \\
 &&
+64 {k_1} p^6 k^6+80 {k_1}^3
   p^5 k^6-520 {k_1} p^5 k^6-336
   {k_1}^3 p^4 k^6+1240
   {k_1} p^4 k^6-144 {k_1}^3
   k^6 \\
 &&
+384 {k_1}^3 p^3 k^6-1072
   {k_1} p^3 k^6-224 {k_1}^3
   p^2 k^6-112 {k_1} p^2 k^6-296
   {k_1} k^6+240 {k_1}^3 p
   k^6 \\
 &&
+696 {k_1} p k^6+128
   {k_1}^2 p^5 k^5-48 p^5 k^5+4
   {k_1}^4 k^5+20 {k_1}^4
   p^4 k^5-812 {k_1}^2 p^4
   k^5+248 p^4 k^5 \\
 &&
-64 {k_1}^4
   p^3 k^5+1784 {k_1}^2 p^3
   k^5-632 p^3 k^5-68 {k_1}^2
   k^5+72 {k_1}^4 p^2 k^5-1856
   {k_1}^2 p^2 k^5+936 p^2
   k^5 \\
 &&
-32 {k_1}^4 p k^5+824
   {k_1}^2 p k^5-632 p k^5+128
   k^5+64 {k_1}^3 p^4 k^4-48
   {k_1} p^4 k^4+224 {k_1}^3
   k^4 \\
 &&
-272 {k_1}^3 p^3 k^4+208
   {k_1} p^3 k^4+512 {k_1}^3
   p^2 k^4-80 {k_1} p^2 k^4+128
   {k_1} k^4-464 {k_1}^3 p k^4 -272 {k_1} p k^4 \\
 &&
-24 {k_1}^4 k^3+16 {k_1}^4
   p^3 k^3-248 {k_1}^2 p^3
   k^3+96 p^3 k^3+176 {k_1}^2
   k^3-72 {k_1}^4 p^2 k^3+840
   {k_1}^2 p^2 k^3 \\
 &&
-336 p^2
   k^3+80 {k_1}^4 p k^3-768
   {k_1}^2 p k^3+288 p k^3-48
   k^3-192 {k_1}^3 k^2-128
   {k_1}^3 p^2 k^2-96 {k_1}
   p^2 k^2 \\
 &&
-32 {k_1} k^2+256
   {k_1}^3 p k^2+192 {k_1} p
   k^2+32 {k_1}^4 k-32
   {k_1}^2 k-32 {k_1}^4 p
   k+32 {k_1}^2 p k-16 p k \\
 &&
+16 k+16 {k_1}^3-16 {k_1}\big),
     \end{eqnarray*} 
   \begin{eqnarray*}
 &&
a_{13} = -2 {k_1} (p k-k+{k_1}) \big(2 p^8
   k^{10}-14 p^7 k^{10}+42 p^6
   k^{10}-70 p^5 k^{10}+70 p^4
   k^{10}-42 p^3 k^{10} \\
 &&
+14 p^2
   k^{10}-2 p k^{10}+14 {k_1}
   p^7 k^9-50 {k_1} p^6 k^9+36
   {k_1} p^5 k^9+80 {k_1}
   p^4 k^9-170 {k_1} p^3 k^9 \\
 &&
+126
   {k_1} p^2 k^9+4 {k_1}
   k^9-40 {k_1} p k^9+10 p^7
   k^8+7 {k_1}^2 p^6 k^8-59 p^6
   k^8-18 {k_1}^2 p^5 k^8+168
   p^5 k^8 \\
 &&
-288 p^4 k^8+40
   {k_1}^2 p^3 k^8+302 p^3 k^8-2
   {k_1}^2 k^8-45 {k_1}^2
   p^2 k^8-183 p^2 k^8+18
   {k_1}^2 p k^8+56 p k^8 \\
 &&
-6 k^8+24 {k_1} p^6 k^7-84
   {k_1} p^5 k^7-4 {k_1} p^4
   k^7+328 {k_1} p^3 k^7-480
   {k_1} p^2 k^7-52 {k_1}
   k^7+268 {k_1} p k^7 \\
 &&
+12 p^6
   k^6+12 {k_1}^2 p^5 k^6-88 p^5
   k^6-30 {k_1}^2 p^4 k^6+278
   p^4 k^6-32 {k_1}^2 p^3
   k^6-488 p^3 k^6+26 {k_1}^2
   k^6 \\
 &&
+132 {k_1}^2 p^2 k^6+488
   p^2 k^6-108 {k_1}^2 p k^6-256
   p k^6+54 k^6-144 {k_1} p^3
   k^5+432 {k_1} p^2 k^5 \\
 &&
+144
   {k_1} k^5-432 {k_1} p
   k^5-56 p^4 k^4+224 p^3 k^4-72
   {k_1}^2 k^4-72 {k_1}^2
   p^2 k^4-392 p^2 k^4+144
   {k_1}^2 p k^4 \\
 &&
+336 p k^4-112
   k^4+48 {k_1} p^3 k^3-144
   {k_1} p^2 k^3-112 {k_1}
   k^3+208 {k_1} p k^3+56
   {k_1}^2 k^2+24 {k_1}^2
   p^2 k^2 \\
 &&
+72 p^2 k^2-48 {k_1}^2
   p k^2-144 p k^2+40 k^2+32
   {k_1} k-32 {k_1} p k-16
   {k_1}^2+16\big),
 \\
   &&
a_{14}=6 {k_1}
   p^9 k^{11}-40 {k_1} p^8
   k^{11}+108 {k_1} p^7
   k^{11}-140 {k_1} p^6
   k^{11}+56 {k_1} p^5 k^{11}+84
   {k_1} p^4 k^{11}   \\
   &&
-140
   {k_1} p^3 k^{11}+92 {k_1}
   p^2 k^{11}+4 {k_1} k^{11}-30
   {k_1} p k^{11}+17 {k_1}^2
   p^8 k^{10}+p^8 k^{10}-75
   {k_1}^2 p^7 k^{10}   \\
   &&
-5 p^7
   k^{10}+111 {k_1}^2 p^6
   k^{10}+7 p^6 k^{10}-43
   {k_1}^2 p^5 k^{10}+7 p^5
   k^{10}-15 {k_1}^2 p^4
   k^{10}-35 p^4 k^{10}   \\
   &&
-57
   {k_1}^2 p^3 k^{10}+49 p^3
   k^{10}+18 {k_1}^2 k^{10}+125
   {k_1}^2 p^2 k^{10}-35 p^2
   k^{10}-81 {k_1}^2 p k^{10}+13
   p k^{10}-2 k^{10}   \\
   &&
+22 {k_1}
   p^8 k^9+12 {k_1}^3 p^7
   k^9-136 {k_1} p^7 k^9-34
   {k_1}^3 p^6 k^9+358 {k_1}
   p^6 k^9+10 {k_1}^3 p^5
   k^9-464 {k_1} p^5 k^9   \\
   &&
+40
   {k_1}^3 p^4 k^9+170 {k_1}
   p^4 k^9-20 {k_1}^3 k^9+328
   {k_1} p^3 k^9-82 {k_1}^3
   p^2 k^9-494 {k_1} p^2 k^9-56
   {k_1} k^9   \\
   &&
+74 {k_1}^3 p
   k^9+272 {k_1} p k^9+35
   {k_1}^2 p^7 k^8+3 p^7
   k^8+{k_1}^4 p^6 k^8-199
   {k_1}^2 p^6 k^8-12 p^6 k^8-4
   {k_1}^4 p^5 k^8   \\
   &&
+407
   {k_1}^2 p^5 k^8+p^5 k^8+6
   {k_1}^4 p^4 k^8-412
   {k_1}^2 p^4 k^8+74 p^4 k^8-4
   {k_1}^4 p^3 k^8+333
   {k_1}^2 p^3 k^8   \\
   &&
-171 p^3
   k^8-74 {k_1}^2
   k^8+{k_1}^4 p^2 k^8-347
   {k_1}^2 p^2 k^8+176 p^2
   k^8+257 {k_1}^2 p k^8-89 p
   k^8+18 k^8   \\
   &&
+16 {k_1} p^7
   k^7+16 {k_1}^3 p^6 k^7-152
   {k_1} p^6 k^7-80 {k_1}^3
   p^5 k^7+448 {k_1} p^5 k^7+80
   {k_1}^3 p^4 k^7-568 {k_1}
   p^4 k^7   \\
   &&
+112 {k_1}^3 k^7-32
   {k_1}^3 p^3 k^7+112 {k_1}
   p^3 k^7+176 {k_1}^3 p^2
   k^7+536 {k_1} p^2 k^7+184
   {k_1} k^7-272 {k_1}^3 p
   k^7   \\
   &&
-576 {k_1} p k^7+32
   {k_1}^2 p^6 k^6+4 {k_1}^4
   p^5 k^6-228 {k_1}^2 p^5
   k^6-16 {k_1}^4 p^4 k^6+636
   {k_1}^2 p^4 k^6-44 p^4 k^6   \\
   &&
+24
   {k_1}^4 p^3 k^6-968
   {k_1}^2 p^3 k^6+192 p^3
   k^6+60 {k_1}^2 k^6-16
   {k_1}^4 p^2 k^6+840
   {k_1}^2 p^2 k^6-296 p^2 k^6   \\
   &&
+4
   {k_1}^4 p k^6-372 {k_1}^2
   p k^6+192 p k^6-44 k^6+16
   {k_1}^3 p^5 k^5-40 {k_1}
   p^5 k^5-72 {k_1}^3 p^4
   k^5   \\
   &&
+224 {k_1} p^4 k^5-200
   {k_1}^3 k^5+192 {k_1}^3
   p^3 k^5-288 {k_1} p^3 k^5-304
   {k_1}^3 p^2 k^5-80 {k_1}
   p^2 k^5-144 {k_1} k^5   \\
   &&
+368
   {k_1}^3 p k^5+328 {k_1} p
   k^5+4 {k_1}^4 k^4+4
   {k_1}^4 p^4 k^4-124
   {k_1}^2 p^4 k^4+16 p^4 k^4-24
   {k_1}^4 p^3 k^4   \\
   &&
+544
   {k_1}^2 p^3 k^4-80 p^3 k^4-68
   {k_1}^2 k^4+40 {k_1}^4
   p^2 k^4-848 {k_1}^2 p^2
   k^4+192 p^2 k^4-24 {k_1}^4 p
   k^4   \\
   &&
+560 {k_1}^2 p k^4-208 p
   k^4+16 k^4+192 {k_1}^3 k^3-64
   {k_1}^3 p^3 k^3+192
   {k_1}^3 p^2 k^3-320
   {k_1}^3 p k^3   \\
   &&
-16 {k_1}^4
   k^2+96 {k_1}^2 k^2-16
   {k_1}^4 p^2 k^2+160
   {k_1}^2 p^2 k^2-48 p^2 k^2+32
   {k_1}^4 p k^2-320 {k_1}^2
   p k^2   \\
   &&
+96 p k^2+16 k^2-96
   {k_1}^3 k+96 {k_1}^3 p
   k+16 {k_1}^4-16  {k_1}^2,
   \\
   &&
a_{15}=
   -2 k (p-1) (p
   k-k+{k_1}) \left(p
   k^2-k^2+2\right)
   \left({k_1}^2-2 k {k_1}+2
   k p {k_1}+1\right)\times  \\
   &&
 \left(p^2
   k^2-p k^2-2\right) \left(p^3
   k^4-3 p k^4+2 k^4-8
   k^2+4\right),
   \\
   &&
a_{16}= k (p-1) ((p-1)k+{k_1}) ((p-1)k {k_1}+1) 
\left(k^4 (p-1)^2 (p+2)-8 k^2+4\right)^2. 
    \end{eqnarray*} 
We see that the value of $F_3(x_{12})$ at $x_{12} = 1$ is given by 
    \begin{eqnarray*} 
 && F_3(1)= (k-1) (k+1) (k-{k_ 1}) (k( p-1)+{k_ 1}-2)^2 (k( p-1)+{k_ 1}+2)^2 \times
  \\
  && \Big({k_ 1}^3 \left(3 k^2
   (p-1)+4\right)+3 k {k_ 1}^2 \left(k^2
   (p-1)^2+3 p-2\right)+k \left(k^2 (p-1)^2
   (p+2)-4\right)^3\\
   &&+{k_ 1} \left(k^4
   (p-1)^3+6 k^2 (p-1) p-4\right)+k
   {k_ 1}^4\Big). 
       \end{eqnarray*} 
Hence it follows that, for $k_1 > k$ it is  $ F_3(1) < 0$.  
We also see that $ F_3(0) =a_0 > 0 $ and the leading coefficient $a_{16}$ of 
$ F_3(x_{12}) $ is positive. In fact, we see that $ F_3(k_1 k p) >0$. 

 In conclusion, we see that for $k_1 > k$,    $ F_3(x_{12})=0 $ has at least two positive solutions
 $x_{12}= \alpha_1, \alpha_2$, with $ 0 < \alpha_1 <1 < \alpha_2 < k_1 k p $. 

\smallskip
\noindent
{\bf Remark 1. } It is possible to show that, for $k_1 \geq 8 k p$, $k \geq 2$ and $ p \geq 3$,  the equation $F_3(x_{12}) = 0$ has at least four positive solutions.  We just give a  sketch of proof. 

1) \ We see that $F_3(2) >0$ for $k_1 \geq 8 k p$, $k \geq 2$ and $ p \geq 3$. 

2) \ Put $\displaystyle \beta =\frac{ \left(k^2 (p-1)+2\right) \left(k^2 (p-1) p-2\right)}{k (p-1) \left(k^4 (p-1)^2 (p+2)-8
   k^2+4\right)} k_1$. We see that, for  $k_1 \geq 5 k p$, $k \geq 2$ and $ p \geq 3$, $F_3(\beta) < 0$. 
   
 3) \ We have $\beta > 2$ for  $k_1 \geq 5 k p$, $k \geq 2$ and $ p \geq 3$. 
 
 Thus  there are at least four positive solutions $\alpha_1, \alpha_2, \alpha_3, \alpha_4$ of the equation $F_3(x_{12}) = 0$ with $$ 0 < \alpha_1 <1 < \alpha_2 <  2 < \alpha_3 < \beta  < \alpha_4  < k_1 k p, $$ 
 for $k_1 \geq 8 k p$, $k \geq 2$ and $ p \geq 3$. 
 Therefore, it follows that the Lie group  $\SU(N)$  ($N=k_1 +(p-1)k$) admits at least four  $\Ad(\s(\U(k_1)\times \U(k)\times \cdots \times \U(k))$-invariant Einstein metrics, which are not naturally reductive. 

 We now examine the case   
for $k_1 = k$.   We see that  
$$ F_3(x_{12}) = k^2 (x_{12}-1) G_3(x_{12}),$$
where $ \displaystyle G_3(x_{12}) = \sum_{\ell=0}^{15} b_{\ell}^{} {x_{12}}^{\ell}_{}$ is 
a  polynomial of $x_{12}$ with coefficients 
   \begin{eqnarray*} 
&&   b_0 = - k^6 \left( (p-1) k^2+1\right) \left( (p^2 -2 p +2) k^2+p-2\right), \\
   &&  b_1=  k^6 \left(3 p^3 k^4-5 p^2 k^4+2 p k^4+2 k^4+2 p^2 k^2-p
   k^2-4 k^2-p+2\right),\\
  &&  b_2=  -k^6 (6 p^4 k^4-19 p^3 k^4+38 p^2 k^4-35 p
   k^4+12 k^4+16 p^3 k^2-51 p^2 k^2 \\  &&+61 p k^2-24 k^2
   +10 p^2-26  p+12),
   \\
 &&  b_3= k^6 (18 p^4 k^4-41 p^3 k^4+36 p^2 k^4+5 p k^4-12
   k^4+24 p^3 k^2  -35 p^2 k^2  \\  &&-19 p k^2+24 k^2-2 p^2+14 p-12),  
 \\  
 &&  b_4=  -k^4 (15 p^5 k^6-47 p^4 k^6+124 p^3 k^6-178 p^2 k^6+121 p k^6-30
   k^6 +50 p^4 k^4-187 p^3 k^4   \quad  \\  
   && +324 p^2 k^4-256 p k^4+76 k^4+39 p^3 k^2-140
   p^2 k^2+143 p k^2-62 k^2-8 p+16 )
       \end{eqnarray*} 
 \begin{eqnarray*}
     && b_5=  k^4 (45 p^5 k^6-129 p^4
   k^6+166 p^3 k^6-54 p^2 k^6-57 p k^6+30 k^6+90 p^4 k^4-207 p^3 k^4
   \\&&+76
   p^2 k^4+116 p k^4-76 k^4+17 p^3 k^2-4 p^2 k^2-67 p k^2+62 k^2+8
   p-16),
    \\
   && b_6=  -k^4(20 p^6 k^6-55 p^5 k^6+184 p^4 k^6-395 p^3
   k^6+424 p^2 k^6-218 p k^6+40 k^6+80 p^5 k^4 \\
   && -329 p^4 k^4+757 p^3
   k^4-962 p^2 k^4+614 p k^4-144 k^4+76 p^4 k^2-360 p^3 k^2+616 p^2
   k^2 \\
   && -540 p k^2+168 k^2-56 p^2+144 p-64),
   \\  
    && b_7= k^4 (60 p^6 k^6-205
   p^5 k^6+342 p^4 k^6-243 p^3 k^6-56 p^2 k^6+142 p k^6-40 k^6 +160 p^5 k^4 \\
   &&-497 p^4 k^4+485 p^3 k^4+22 p^2 k^4-346 p k^4+144 k^4+68 p^4
   k^2 -188 p^3 k^2+44 p^2 k^2+300 p k^2 \\
   &&-168 k^2+8 p^2-96 p+64 ),
    \\
   && b_8=  -k^2(15 p^7 k^8-25 p^6 k^8+116 p^5 k^8-426 p^4 k^8+691 p^3 k^8-563
   p^2 k^8+222 p k^8-30 k^8 \\
   &&+70 p^6 k^6-291 p^5 k^6+844 p^4 k^6-1604 p^3
   k^6+1694 p^2 k^6-885 p k^6+156 k^6+79 p^5 k^4 \\
   &&-466 p^4 k^4+1130 p^3
   k^4-1624 p^2 k^4+1127 p k^4-254 k^4-136 p^3 k^2+496 p^2 k^2-480 p
   k^2 \\
   &&+160 k^2+16 p-32),
   \\
   && b_9= k^2 (45 p^7 k^8-175 p^6 k^8+358 p^5
   k^8-390 p^4 k^8+73 p^3 k^8+227 p^2 k^8-168 p k^8+30 k^8 \\
   &&+150 p^6
   k^6-583 p^5 k^6+868 p^4 k^6-460 p^3 k^6-402 p^2 k^6+583 p k^6-156
   k^6+97 p^5 k^4 \\
   &&-430 p^4 k^4+542 p^3 k^4+200 p^2 k^4-687 p k^4+254
   k^4-72 p^3 k^2-48 p^2 k^2+304 p k^2 \\
   &&-160 k^2-32 p+32),
    \\
   && b_{10} = -k^2(6 p^8 k^8+7 p^7 k^8-2 p^6 k^8-205 p^5 k^8+558 p^4 k^8-667 p^3
   k^8+418 p^2 k^8-127 p k^8 \\
   &&+12 k^8+32 p^7 k^6-109 p^6 k^6+399 p^5
   k^6-1224 p^4 k^6+2024 p^3 k^6-1755 p^2 k^6+721 p k^6 \\
   &&-88 k^6+42 p^6
   k^4-290 p^5 k^4+904 p^4 k^4-1980 p^3 k^4+2370 p^2 k^4-1314 p k^4+204
   k^4 \\
   &&-136 p^4 k^2+680 p^3 k^2-1152 p^2 k^2+928 p k^2-192 k^2+80 p^2-208
   p+64),\\
   && b_{11} =  k^2 (18 p^8 k^8-75 p^7 k^8+184 p^6 k^8-285 p^5
   k^8+162 p^4 k^8+143 p^3 k^8-240 p^2 k^8 \\
   &&+105 p k^8-12 k^8+72 p^7
   k^6-333 p^6 k^6+655 p^5 k^6-664 p^4 k^6-16 p^3 k^6+725 p^2 k^6  \\
   && -527 p
   k^6+88 k^6+62 p^6 k^4-370 p^5 k^4+752 p^4 k^4-356 p^3 k^4-674 p^2
   k^4+854 p k^4  \\
   && -204 k^4-136 p^4 k^2+232 p^3 k^2+176 p^2 k^2-592 p
   k^2+192 k^2-32 p^2+160 p-64),
   \\
   && b_{12} =  -p^9 k^{10}-11 p^8 k^{10}+36 p^7
   k^{10}+18 p^6 k^{10}-206 p^5 k^{10}+367 p^4 k^{10}-328 p^3 k^{10} \\
   &&+160 p^2 k^{10}  -37 p k^{10}+2 k^{10}-6 p^8 k^8-3 p^7 k^8-12 p^6 k^8+348 p^5
   k^8-1038 p^4 k^8 \\
   &&+1385 p^3 k^8-948 p^2 k^8+294 p k^8-20 k^8-9 p^7
   k^6+64 p^6 k^6-239 p^5 k^6+914 p^4 k^6 \\
   &&-1839 p^3 k^6+1820 p^2 k^6-777 p
   k^6+66 k^6+48 p^5 k^4-336 p^4 k^4+904 p^3 k^4 \\
   &&-1392 p^2 k^4+856 p
   k^4-80 k^4-96 p^3 k^2+400 p^2 k^2-352 p k^2+32 k^2+16 p, 
   \\
   && b_{13} = 3 p^9 k^{10}-11 p^8 k^{10}+34 p^7 k^{10}-86 p^6 k^{10}+94 p^5 k^{10}+23 p^4
   k^{10}-138 p^3 k^{10} \\
   &&+112 p^2 k^{10}-33 p k^{10}+2 k^{10}+14 p^8
   k^8-73 p^7 k^8+180 p^6 k^8-296 p^5 k^8+158 p^4 k^8 \\
   &&+323 p^3 k^8-516 p^2
   k^8+230 p k^8-20 k^8+15 p^7 k^6-112 p^6 k^6+317 p^5 k^6-350 p^4
   k^6 \\
   &&-143 p^3 k^6+732 p^2 k^6-525 p k^6+66 k^6-64 p^5 k^4+208 p^4
   k^4-120 p^3 k^4-400 p^2 k^4 \\
   &&+520 p k^4-80 k^4+48 p^3 k^2+48 p^2 k^2-240
   p k^2+32 k^2+48 p,
          \end{eqnarray*} 
 \begin{eqnarray*}
   && b_{14} = -((p-1) p (p^3 k^4-3 p k^4+2 k^4-8
   k^2+4) (3 p^4 k^6-9 p^3 k^6+11 p^2 k^6-7 p k^6   \quad  \quad \\
   &&+2 k^6 +9 p^3 k^4-24 p^2 k^4+29 p k^4-14 k^4+4 p^2 k^2-28 p k^2+24
   k^2-12)),
   \\
   && 
   b_{15}= (p-1) p \left(k^2 (p-1)+1\right) \left(k^4 (p-1)^2 (p+2)-8 k^2+4\right)^2. 
   \end{eqnarray*} 
   We see that the value of $G_3(x_{12})$ at $x_{12} = 0$ is given by 
   $$ b_0 = - k^6 \left( (p-1) k^2+1\right) \left( (p^2 -2 p +2) k^2+p-2\right) <0$$
  We also see that $$ G_3(1) =2 (k-1)^2 (k+1)^2 p^2 (k p-2) (k p+2) \left(k^4 p^3-12 k^2 p-8
   k^2-8\right).$$ 
   Now we have $k^4 p^3-12 k^2 p-8 k^2-8 = k^4 (p-3)^3+9 k^4 (p-3)^2+\left(27 k^4-12 k^2\right)
   (p-3)+27 k^4-44 k^2-8$ and $27 k^4-44 k^2-8 =27 (k-2)^4+216 (k-2)^3+604 (k-2)^2+688 (k-2)+248$. 
   Hence, we see that $ G_3(1) > 0$  for $k \geq 2$ and $ p \geq 3$. 
   
   Therefore, we see that, for $k_1 = k$,    $ F_3(x_{12})=0 $ has at least two positive solutions  $x_{12}= 1$ and $ \gamma$, with $ 0 < \gamma <1 $.

\section{Non naturally reductive Einstein metrics on $\SU(N)=\SU(k_1+(p-1) k)$}

Let $G$ be a compact, connected semisimple Lie group, $H$ a closed subgroup of $G$ and let $\fr{g}$ be the Lie algebra of $G$ and $\fr{h}$ the subalgebra corresponding to $H$. 
Let $\fr{n}$ be an orthogonal complement of $\fr{h}$ with respect to  $-B$ (the negative of the Killing form of $\fr{g}$).  Then we have
$$
\fr{g}=\fr{h}\oplus\fr{n}, \quad \Ad(H)\fr{n}\subset\fr{n}.
$$

A Riemannian homogeneous space $(M=G/H, g)$ with reductive complement $\fr{n}$ of $\fr{h}$ in $\fr{g}$ is called {\it naturally reductive} if
$$
\langle [X, Y]_\fr{n}, Z\rangle +\langle Y, [X, Z]_\fr{n}\rangle=0 \quad\mbox{for all}\ 
X, Y, Z\in\fr{n}.
$$ 
Here $\langle\ , \  \rangle$ denotes the inner product on $\fr{n}$ induced from the Riemannian metric $g$.  

In \cite{DZ}  D'Atri and Ziller  investigated naturally reductive metrics among left-invariant metrics on compact Lie groups and gave a characterization of metrics in the case of simple Lie groups.

Let $G$ be a compact connected semisimple Lie group, $L$ a closed subgroup of $G$ and $\fr{l}$ the subalgebra corresponding to $L$. 
Let $\fr{l}=\fr{l}_0\oplus\fr{l}_1\oplus\cdots\oplus\fr{l}_p$ be a decomposition of $\fr{l}$ into ideals, where $\fr{l}_0$ is the center of $\fr{l}$ and $\fr{l}_i$ $(i=1,\dots , p)$ are simple ideals of $\fr{l}$.
Let $A_0|_{\fr{l}_0}$ be an arbitrary metric on $\fr{l}_0$. 


\begin{theorem}\label{DZ} {\rm (\cite[Theorem 1, p. 9 and Theorem 3, p. 14]{DZ})}  Under the notations above a left-invariant metric on $G$ of the form
\begin{equation}\label{natural}
\langle\  ,\  \rangle =x\cdot B|_{\fr{n}}+A_0|_{\fr{l}_0}+u_1\cdot B|_{\fr{l}_1}+\cdots
+ u_p\cdot B|_{\fr{l}_p}, \quad (x, u_1, \dots , u_p >0)
\end{equation}
is naturally reductive with respect to $G\times L$, where $G\times L$ acts on $G$ by
$(g, l)y=g y l^{-1}$.

Moreover, if a left-invariant metric $\langle\ ,\ \rangle$ on a compact simple Lie group
$G$ is naturally reductive, then there is a closed subgroup $L$ of $G$ and the metric
$\langle\ ,\ \rangle$ is given by the form $(\ref{natural})$.
\end{theorem}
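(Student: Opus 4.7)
The plan is to prove Theorem~\ref{DZ} in two stages: a direct construction producing families of naturally reductive metrics, followed by a converse recovering this form from any naturally reductive metric on a simple compact $G$. For the direct part, I would realize $G$ as the homogeneous space $\widetilde{G}/\widetilde{L}$ with $\widetilde{G} = G\times L$ acting by $(g,l)\cdot y = g y l^{-1}$ and $\widetilde{L} = \Delta L = \{(l,l) : l\in L\}$ the isotropy at $e$. On $\widetilde{\fr{g}} = \fr{g}\oplus\fr{l}$, I would place the $\Ad(\widetilde{G})$-invariant symmetric form
\[
Q = a(-B_{\fr{g}}) \oplus \Bigl(\widetilde{A}_0|_{\fr{l}_0} + \sum_{i=1}^{p}\widetilde{u}_i(-B_{\fr{l}_i})\Bigr),
\]
with $a, \widetilde{u}_i > 0$ and $\widetilde{A}_0$ an inner product on $\fr{l}_0$, and take $\fr{m}$ to be the $Q$-orthogonal complement of $\Delta\fr{l}$. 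Then $\widetilde{\fr{g}} = \Delta\fr{l}\oplus\fr{m}$ is a reductive decomposition, and $\Ad$-invariance of $Q$ guarantees that the induced metric on $\widetilde{G}/\widetilde{L}$ is naturally reductive, by the standard fact that an $\Ad$-invariant inner product on a transitive extension produces a naturally reductive structure on the quotient.

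The parameter-matching step is essentially linear algebra: identify $T_e(\widetilde{G}/\widetilde{L})$ with $\fr{g}$ via the differential $(Y,Z)\mapsto Y-Z$, use that $B_{\fr{g}}|_{\fr{l}_i} = \lambda_i B_{\fr{l}_i}$ for some $\lambda_i > 0$ (each $\fr{l}_i$ is simple and $\fr{g}$ is of compact type), and compute block by block. One obtains $x = a$ on $\fr{n}$ and
\[
u_i = \frac{a\,\widetilde{u}_i}{\widetilde{u}_i + a\lambda_i}
\]
on each $\fr{l}_i$, together with an analogous algebraic formula expressing $A_0$ in terms of $\widetilde{A}_0$ and the restriction of $B_{\fr{g}}$ to $\fr{l}_0$. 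Varying the free parameters $a, \widetilde{A}_0, \widetilde{u}_i$ produces metrics of the form~(\ref{natural}); any range restrictions on the induced $u_i$ that arise from this identification are absorbed by using suitably enlarged extensions $\widetilde{G}$ when necessary.

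For the converse, assume $(G,\langle\ ,\ \rangle)$ is naturally reductive with $G$ compact simple. Let $I_0$ denote the connected component of the full isometry group of $(G,\langle\ ,\ \rangle)$ containing left translations by $G$. By Kostant's structure theorem for naturally reductive spaces, the natural-reductivity hypothesis yields a closed transitive subgroup $\widetilde{G}\subseteq I_0$ with closed isotropy $\widetilde{L}$ at $e$ and a bi-invariant form on $\widetilde{G}$ inducing $\langle\ ,\ \rangle$ on $G = \widetilde{G}/\widetilde{L}$. Since $G$ is simple, its Lie algebra is an ideal inside the Lie algebra of $\widetilde{G}$, forcing a direct-sum decomposition of the latter as $\fr{g}\oplus\fr{l}$ for some compact Lie subalgebra $\fr{l}$, and moreover $\widetilde{L}$ must be the diagonal image of a closed subgroup $L\subseteq G$ with Lie algebra $\fr{l}$. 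Decomposing $\fr{l}$ into its center $\fr{l}_0$ and simple ideals $\fr{l}_1,\dots,\fr{l}_p$ and invoking $\Ad(L)$-invariance of the inner product then forces the metric on $\fr{g} = \fr{n}\oplus\fr{l}$ to take precisely the form~(\ref{natural}).

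The main obstacle is the converse: guaranteeing that no ``twisted'' embedding of $\widetilde{L}$ into $\widetilde{G}$ occurs, so that the recovered action is actually $G\times L$ acting as prescribed, requires both the simplicity of $G$ (to ensure $\fr{g}$ is a full ideal in the Lie algebra of $\widetilde{G}$) and Kostant's characterization of transitive extensions of naturally reductive spaces. The direct part, once the invariant form $Q$ is chosen, reduces to explicit computation and parameter-counting.
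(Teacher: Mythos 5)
Theorem~\ref{DZ} is quoted from D'Atri--Ziller without proof, so there is no argument in the paper to compare against; what you have written is a reconstruction of the standard proof, and it carries two real gaps.

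In the direct part, starting from a \emph{positive-definite} $\Ad(G\times L)$-invariant form $Q = a(-B_{\fr{g}})\oplus Q_{\fr{l}}$ gives, exactly as your formula shows, $u_i = a\widetilde{u}_i/(\widetilde{u}_i + a\lambda_i)\in(0,a)=(0,x)$, so this construction misses every metric of the form (\ref{natural}) with $u_i \geq x$. Your remark that the range restriction is ``absorbed by using suitably enlarged extensions $\widetilde{G}$'' does not repair this: enlarging the second factor does not change the formula for $u_i$ on $\fr{l}_i$. The correct fix is either to allow $Q$ to be an \emph{indefinite} bi-invariant form with $Q|_{\Delta\fr{l}}$ nondegenerate (taking $\widetilde{u}_i < -a\lambda_i$ yields $u_i > x$ while the induced metric on $\fr{m}$ stays positive definite, and natural reductivity of a bi-invariant quotient never required $Q$ definite), or to abandon the ambient construction and verify the algebraic identity $\langle[X,Y]_{\fr{n}},Z\rangle + \langle Y,[X,Z]_{\fr{n}}\rangle = 0$ directly from the bracket relations of $\fr{g}=\fr{n}\oplus\fr{l}_0\oplus\fr{l}_1\oplus\cdots\oplus\fr{l}_p$, which is in fact how D'Atri and Ziller prove their Theorem~1.

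In the converse, the assertion that simplicity of $G$ ``forces'' $\fr{g}$ to be an ideal of the Lie algebra of $\widetilde{G}$ and $\widetilde{L}$ to sit diagonally is not a consequence of Kostant's theorem alone. The structural input you need, and what D'Atri--Ziller actually invoke for their Theorem~3, is the Ochiai--Takahashi/Ozeki description of the isometry group of a left-invariant metric on a compact simple Lie group: its identity component is $L(G)\cdot R(H)$ for some closed $H\leq G$, with the left-translation and right-translation factors commuting and normal. Once the transvection group $\widetilde{G}$ is placed inside this product one reads off $\widetilde{\fr{g}}=\fr{g}\oplus\fr{l}$ and the diagonal embedding of the isotropy, hence the form (\ref{natural}); without that input the ``twisted embedding'' you flag as the main obstacle is not actually excluded.
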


\noindent
We now consider  the left-invariant metrics $g$  on  $\SU(N)=\SU(k_1+(p-1) k)$ given by (\ref{metric_g111}). 
Let  $\fr{k}=\fr{c}\oplus\fr{su}(k_1)\oplus\underbrace{\fr{su}(k)\oplus\cdots\oplus\fr{su}(k)}_{p-1}=\fr{c}\oplus
\fr{m}_1\oplus\fr{m}_2\oplus\cdots\oplus\fr{m}_{p}$.

\begin{prop}\label{reductive}
If a left invariant metric $g$ of the form   {\em (\ref{metric_g111})} on $\SU(k_1+(p-1) k)$  is naturally reductive with respect to $\SU(k_1+(p-1) k) \times L$, for some closed subgroup $L$ of $\SU(k_1+(p-1) k)$, then  one of 
 the following holds:
 The metric   {\em (\ref{metric_g111})} is either 
\begin{itemize}
\item[(i)]
$\Ad(\s(\U(k_1)\times \underbrace{ \U(k)\times \cdots \times \U(k)}_{p-1}))$-invariant and  $ x_{12}=x_{23}$, or 
\item[(ii)]
 $\Ad(\s(\U(k_1)\times \U((p-1)k)))$-invariant and  $y_2 = x_2=x_{23} $. 
\end{itemize}
Conversely, if one of the conditions {\em (i), (ii)} is satisfied,  then the metric 
 of the form {\em (\ref{metric_g111})}  is  naturally reductive  with respect to $\SU(k_1+(p-1) k)\times L$, for some closed subgroup $L$ of $\SU(k_1+(p-1) k)$.
  \end{prop}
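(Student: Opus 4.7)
My plan is to apply Theorem \ref{DZ} in both directions. For the sufficiency direction (converse), I would exhibit, for each of conditions (i) and (ii), an explicit closed subgroup $L$ of $\SU(N)$ such that the metric (\ref{metric_g111}) takes the form (\ref{natural}). For (i), take $L=K=\s(\U(k_1)\times \U(k)\times\cdots\times\U(k))$, with decomposition $\fr{l}_0 = \fr{c}$, $\fr{l}_1 = \fr{su}(k_1)$, $\fr{l}_i = \fr{su}(k)$ for $i=2,\dots,p$, and complement $\fr{n} = \bigoplus_{r<s}\fr{m}_{rs}$; the hypothesis $x_{12}=x_{23}$ is exactly what makes $g|_{\fr{n}}$ a scalar multiple of $-B$, while the arbitrary inner product $A_0 = y_1 B|_{\fr{c}_1}+y_2\sum_{j\ge 2}B|_{\fr{c}_j}$ absorbs $y_1,y_2$ on the center $\fr{l}_0$ and $u_1=x_1$, $u_i=x_2$ ($i\ge2$) match the scalars on the simple ideals. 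For (ii), take $L=\s(\U(k_1)\times \U((p-1)k))$, with $\fr{l}_0=\fr{c}_1$, $\fr{l}_1 = \fr{su}(k_1)$, $\fr{l}_2 = \fr{su}((p-1)k)= \bigoplus_{j\ge 2}\fr{c}_j\oplus\bigoplus_{i\ge 2}\fr{su}(k)\oplus\bigoplus_{2\le r<s\le p}\fr{m}_{rs}$, and complement $\fr{n} = \bigoplus_{s=2}^p\fr{m}_{1s}$; the hypothesis $y_2=x_2=x_{23}$ is precisely what makes $g|_{\fr{l}_2}$ a scalar multiple of $B|_{\fr{l}_2}$, so (\ref{metric_g111}) fits (\ref{natural}) with $u_2=y_2=x_2=x_{23}$ and $x=x_{12}$.

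For the necessity direction, suppose (\ref{metric_g111}) is naturally reductive with respect to $\SU(N)\times L$; Theorem \ref{DZ} then gives a decomposition $\fr{l}=\fr{l}_0\oplus\fr{l}_1\oplus\cdots\oplus\fr{l}_q$ of the Lie algebra of $L$ and a $(-B)$-orthogonal complement $\fr{n}$ for which $g$ has the form (\ref{natural}). The crucial elementary observation is that the six summands $\fr{c}_1$, $\bigoplus_{j\ge 2}\fr{c}_j$, $\fr{su}(k_1)$, $\bigoplus_{i\ge 2}\fr{su}(k_i)$, $\bigoplus_s \fr{m}_{1s}$, $\bigoplus_{2\le r<s}\fr{m}_{rs}$ are pairwise $B$-orthogonal and $g$ equals $-B$ times a fixed scalar on each of them; since $-B$ is positive definite on $\fr{su}(N)$, the equalities $g|_{\fr{l}_i}=u_i B|_{\fr{l}_i}$ and $g|_{\fr{n}}=xB|_{\fr{n}}$ force each $\fr{l}_i$ and $\fr{n}$ to sit inside a single eigenspace of the operator defined by $g$ relative to $-B$, i.e.\ inside a union of those summands whose scalars coincide. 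Moreover $L$ must preserve $g$ under $\Ad$, so $L$ lies in the $\Ad$-stabilizer of $g$ in $\SU(N)$, which always contains $K$ and grows strictly only when parameter coincidences permit.

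With these constraints I would then enumerate the possibilities for $\fr{l}$. Combined with the requirement that $\fr{l}$ be a reductive subalgebra of $\fr{su}(N)$ with $\Ad(L)$-invariant $(-B)$-orthogonal complement, one verifies that $\fr{l}$ must correspond, up to its abelian part, to a coarsening of the block partition $(k_1,k,\dots,k)$, i.e.\ to a block-diagonal subalgebra $\s(\fr{u}(a_1)\oplus\cdots\oplus\fr{u}(a_m))$. By the permutation symmetry of (\ref{metric_g111}) under the symmetric group on the last $p-1$ blocks of size $k$, only two such coarsenings respect the symmetry: the trivial one (yielding $L=K$, $\fr{n}=\bigoplus_{r<s}\fr{m}_{rs}$ and the condition $x_{12}=x_{23}$, that is, (i)), and the merging of the last $p-1$ blocks (yielding $L=\s(\U(k_1)\times \U((p-1)k))$ and the condition $y_2=x_2=x_{23}$, that is, (ii)). Any coarsening involving the first block, or merging only a proper subset of the last $p-1$ blocks, would produce a simple ideal straddling two distinct $g$-eigenspaces and hence force additional parameter identifications (such as $x_1=x_2=x_{12}$) that strictly imply (i) or (ii).

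The main obstacle I expect is ruling out exotic subalgebras $\fr{l}$ that are not conjugate to a block-diagonal $\s(\fr{u}(a_1)\oplus\cdots\oplus\fr{u}(a_m))$. For this I would rely on the eigenspace observation above: each simple ideal $\fr{l}_i$ must lie within a single $g$-eigenspace, and within each eigenspace the non-central simple subalgebras of $\fr{g}=\fr{su}(N)$ are generically exhausted by simple subalgebras of $\fr{su}(k_1)$ or of the individual $\fr{su}(k)$'s, and --- only when the parameters permit --- by $\fr{su}((p-1)k)$; combined with $[\fr{l},\fr{n}]\subset \fr{n}$ and the simplicity of each $\fr{l}_i$, this restricts $\fr{l}$ to the block-diagonal options and reduces the remaining work to the coarsening analysis above.
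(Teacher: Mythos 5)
Your converse (sufficiency) direction is essentially the paper's: a direct application of Theorem~\ref{DZ} with the explicit subgroups $L=K$ and $L=\s(\U(k_1)\times\U((p-1)k))$, and your verification that the scalars match is correct.

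For the necessity direction you take a genuinely different route, and there are real gaps. First, your ``crucial elementary observation'' is true as a statement but your justification for it is wrong: positive-definiteness of $-B$ alone does \emph{not} force a subspace $W$ with $g|_W=u\,(-B)|_W$ to lie in a single eigenspace of the operator $A$ defined by $g(X,Y)=-B(AX,Y)$. (A ``diagonal'' subspace straddling two eigenspaces $V_\lambda,V_\mu$ with $\lambda<u<\mu$ can satisfy $g|_W=u\,(-B)|_W$ whenever the two projections satisfy $(u-\lambda)\|P_\lambda X\|^2=(\mu-u)\|P_\mu X\|^2$.) What actually forces the conclusion is that $\fr{l}_0,\fr{l}_1,\dots,\fr{l}_q,\fr{n}$ are pairwise $B$-orthogonal --- distinct ideals of $\fr{l}$ are Killing-orthogonal, and $\fr{n}=\fr{l}^{\perp}$ --- so that both $B$ and the metric of form~(\ref{natural}) are block-diagonal with respect to this decomposition; hence $A$ preserves each block, $A|_{\fr{l}_i}=u_i\Id$, and only then does $\fr{l}_i\subset V_{u_i}$ follow. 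This needs to be spelled out.

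Second, and more seriously, the claim that ``$\fr{l}$ must correspond, up to its abelian part, to a coarsening of the block partition $(k_1,k,\dots,k)$'' is asserted but not proved, and the permutation-symmetry argument you use to cut down the coarsenings is not valid: natural reductivity with respect to some $G\times L$ need not respect the $\mathfrak{S}_{p-1}$-symmetry of the metric --- for example a metric with $x_1=x_2=x_{12}=x_{23}$ is naturally reductive with respect to the asymmetric $L=\s(\U(k_1+k)\times\U(k)\times\cdots\times\U(k))$. Your final sentence, that every such coarsening (and every exotic simple ideal such as $\fr{so}(k_1)\subset\fr{su}(k_1)$) forces additional parameter identifications that still imply~(i) or~(ii), is precisely the content of the proposition and cannot be left as an assertion. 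So the enumeration contains a genuine gap.

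The paper's proof sidesteps the entire classification problem by passing to the subalgebra $\fr{h}$ generated by $\fr{k}$ and $\fr{l}$, and using that $g$ is $\Ad(H)$-invariant (being both $\Ad(K)$- and, by natural reductivity, $\Ad(L)$-invariant). Since $\fr{h}$ is an $\ad(\fr{k})$-submodule containing $\fr{k}$, either $\fr{h}=\fr{k}$ (whence $\fr{n}\supset\fr{m}$ and $x_{12}=x_{23}$), or $\fr{h}$ contains some $\fr{m}_{ij}$, hence the simple subalgebra $\fr{h}_{ij}\cong\fr{su}(k_i+k_j)$ and the subalgebra $\fr{k}_{ij}$ it generates with $\fr{k}$. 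Irreducibility of $\fr{m}_{is}+\fr{m}_{js}$ under $\ad(\fr{k}_{ij})$, together with $g|_{\fr{h}_{ij}}$ being a scalar multiple of $B|_{\fr{h}_{ij}}$, then yields the parameter constraints directly, case by case in $(i,j)$, with no need to classify $\fr{l}$ at all. This is both a different and a tighter argument than yours; if you want to complete your sketch you must carry out the subalgebra classification that the paper's $\fr{h}$-trick avoids.
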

 \begin{proof} 
 We first consider the case when the Lie subalgebra $ \fr{l}$ is not contained in $ \fr{k}$. 
  Let $ \fr{h}$ be the Lie subalgebra generated by $ \fr{l}$ and $ \fr{k}$. 
 Then the invariant metric $g$ is $\ad(\fr{h})$-invariant. 
 Take an element $X \in  \fr{h}$ with $X \notin \fr{k}$. We see that $\displaystyle X \in \fr{m} = \sum_{r < s}\fr{m}_{r s}$ and we write $ \displaystyle X = \sum_{r < s}{X}_{r s}$ where  $ {X}_{r s} \in \fr{m}_{r s}$.  Note that, for $Y_i  \in \fr{m}_{i}$ and  $Y_ j \in \fr{m}_{j}$ ($i < j$), we have $[Y_i, [Y_j, X]] \in   \fr{m}_{i j}$ (cf. Lemma \ref{brackets}).
  Thus,  if the component $X_{i j}$ of $X$ is not zero, we see that the submodule  $\fr{m}_{i j}$ is contained in $\fr{h}$. Hence, the Lie subalgebra $\fr{h}_{i j}$ generated by $\fr{m}_{i}$, $\fr{m}_{j}$ and $\fr{m}_{i j}$, which is isomorphic to $\fr{su}(k_i+ k_j)$, is contained in $\fr{h}$. 
(Note that  $\fr{h}_{i j} =\tilde{\fr{a}}_{i j} +  \fr{m}_{i} +\fr{m}_{j} + \fr{m}_{i j}$, where $\tilde{\fr{a}}_{i j}$ is the center of  $\fr{h}_{i j}$.)
  
 We introduce the new subalgebra $\fr{k}_{i j}$
generated by $\fr{h}_{i j}$ and $\fr{k}$. 
Consider  the $\ad(\fr{k}_{i j})$-module decomposition of $\fr{su}(k_1+(p-1) k)$, Then 
 $\fr{m}_{i s}+\fr{m}_{j s}$ ($s \neq i, j$) is  an irreducible summand. 
Since $\fr{k}_{i j}$ is contained in $\fr{h}$,
the subspace $\fr{m}_{i s}+\fr{m}_{j s}$ is contained in an irreducible
summand  $\fr{n}_{\ell}$ of  the $\ad(\fr{h})$-module decomposition of $\fr{su}(k_1+(p-1) k)$.
Now, for the $\ad(\fr{h})$-invariant metric $g$, we see that
$g_{ |_{\fr{n}_{\ell}}}= z_{\ell} B_{|_{\fr{n}_{\ell}}}$ for some $z_{\ell} > 0$.
Thus we have  $x_{is} = x_{js}$.

  Now, 
  if $X_{1j} \neq  0$  ($j=2, \dots, p$),  then we see that $x_{1s} = x_{js}$ and thus $x_{12} = x_{23}$. 
 If $X_{ij} \neq  0$ ($i\geq 2$), then $x_i = x_{ij}$ and thus $x_{2} = x_{23}$.  
  Moreover,  we have $\fr{h}_{i j}= \fr{k}_{i j}\oplus\fr{m}_{i j}$, where  $\fr{k}_{i j}$ is isomorphic to $\fr{s}(\fr{u}(k_i)\oplus \fr{u}(k_j))$. Let  $\fr{c}_{i j}$ be the center of $\fr{k}_{i j}$. Note that $\fr{c}_{i j}$ is contained in 
$ \sum_{j=2}^{p-1} { \fr{c}_j }$. Thus we see that $y_2 = x_{2}= x_{23}$ and the metric is $\Ad(\s(\U(k_1)\times \U((p-1)k)))$-invariant. 

 Finally, we consider the the case when the Lie subalgebra $ \fr{l}$ is contained in $ \fr{k}$.
 Note that the orthogonal complement of $ \fr{l}$ contains $\fr{m}$. 
 Thus we see that $x_{1r} = x_{js}$ for $r\ge 2$ and $s\ge 3$ and hence, $x_{12} = x_{23}$. 
The converse is a direct consequence of Theorem \ref{DZ}.  
\end{proof}

\smallskip
We now use Proposition \ref{reductive} to determine  which of the Einstein metrics found in Section 4 are naturally reductive or not.  
For {\bf Case  1)}, it follows that these metrics are naturally reductive. 
For
{\bf Case  2)},
the  obtained Einstein metrics 
 satisfy $x_{12} \neq  x_{23}=1$ and $0 <  x_2 < 1 = x_{23}$ for $k_1 > k$.  For $k_1\geq 8 k p$, $k \geq 2$ and $ p \geq 3$, we also have  $x_{12} \neq  x_{23}=1$  and $0 <  x_2 < 1 = x_{23}$.    Thus we see that the Einstein metrics  are  not naturally reductive.  

For the case $k_1=k$, that is, $\SU(p k)$, we see that $F_3(x_{12}) = 0$ has at least two positive solutions, namely  $x_{12}=1$  and one in the interval $(0,1)$. For $x_{12}=1$, we see that  the obtained Einstein metric is naturally reductive  with respect to $\SU(p k)\times\s(\U(k)\times\cdots\times\U(k))$, which is an extension result of   
D'Atri and Ziller \cite[p. 55]{DZ}. 
For the other positive solution $x_{12} < 1$, we also  have $0 <  x_2 < 1 = x_{23}$ and thus  the Einstein metric is  not naturally reductive.   
This concludes the proof of Theorem \ref{main}.

\section{The isometry problem for  metrics on the compact simple Lie group $\SU(N)$}
Recall that we have normalized our metrics by setting $x_{23}=1$, so the Einstein metrics obtained in Section 4 are 
not a homothetic change. 
We consider whether these Einstein metrics  are isometric or not. 

Note that, if two invariant  Einstein metrics $g$ and $g'$ are isometric, then the Einstein constants $\lambda(g)$ and 
$\lambda(g')$ are the same, that is, if $\Ric(g) = \lambda(g) g$ and $\Ric(g') = \lambda(g') g'$,  
then $\lambda(g) = \lambda(g')$. 

\begin{lemma}\label{einstein_constant} 
For the Einstein metrics $g$ obtained in Section 4, the Einstein constants  $ \lambda(g)$ are given by 
 \begin{eqnarray*}
 && \lambda(g) = \left(k (p-1) {x_{12}}^2+{k_{1}}\right) \big(
   \left(k^4 (p-1)^2 (p+2)-8 k^2+4\right){x_{12}}^6 \\
   && + 3 k^3 {k_{1}} (p-1) (p+1) {x_{12}}^4+3 k^2 {k_{1}}^2 p \,
   {x_{12}}^2 +k {k_{1}}^3\big)/ \\
   && \big( 4 (k (p-1)+{k_{1}}){x_{12}}^2 
   \left(\left(k^2 (p-1)+2\right){x_{12}}^2 +k {k_{1}}\right)\times \\ &&
   \left( \left(k^2 (p-1) p-2\right){x_{12}}^4+k {k_{1}} (2 p-1)
   {x_{12}}^2+{k_{1}}^2\right)\big),  
\end{eqnarray*} 
where $x_{12}$ satisfies $F_3(x_{12}) = 0$. 
\end{lemma}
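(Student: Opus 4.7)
The plan is to evaluate any one Ricci component of the Einstein metric, since all components must coincide with $\lambda(g)$. The cleanest choice is the component coming from the center, which by Proposition 3.11 (together with the normalization $x_{23}=1$ adopted in Section 4) reads
$$\lambda(g) \;=\; rr_1 \;=\; \frac{y_1}{4\,x_{12}^2}.$$
The task therefore reduces to writing $y_1$ as a rational function of $x_{12}$ alone.

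First, from equation (29), $y_1$ is a rational function of $x_{12}$ and $x_2$; second, equation (33) expresses $x_2$ as a rational function of $x_{12}$. Introducing the abbreviations
$$A \;=\; (k^2(p-1)+2)\,x_{12}^2 + k\,k_1, \qquad B \;=\; (k^2(p-1)p-2)\,x_{12}^4 + k\,k_1\,(2p-1)\,x_{12}^2 + k_1^2,$$
equation (33) reads $x_2 = x_{12}^2 A/B$. Substituting this into (29), collecting over the common denominator $B^2$, and cancelling a single factor $x_{12}^2$, one obtains
$$y_1 \;=\; \frac{x_{12}^2\,A^2\,\bigl(k(p-2)\,x_{12}^2 + k_1\bigr) + k\,B^2}{A\,B\,(k(p-1)+k_1)}.$$

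Dividing by $4 x_{12}^2$ and comparing the resulting expression with the formula in the statement, the lemma reduces to verifying the polynomial identity
$$x_{12}^2\,A^2\,\bigl(k(p-2)\,x_{12}^2 + k_1\bigr) + k\,B^2 \;=\; \bigl(k(p-1)\,x_{12}^2 + k_1\bigr)\,N(x_{12}),$$
where $N(x_{12})$ is the polynomial appearing in the numerator of the statement. Both sides are polynomials of degree eight in $x_{12}$ involving only even powers of $x_{12}$, so the identity amounts to five coefficient equalities in $k, k_1, p$. As a representative sanity check, equating the coefficients of $x_{12}^8$ yields
$$k(p-2)(k^2(p-1)+2)^2 + k(k^2(p-1)p-2)^2 \;=\; k(p-1)\bigl(k^4(p-1)^2(p+2)-8k^2+4\bigr),$$
which follows from a direct expansion and regrouping of the left-hand side.

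The main obstacle is thus not conceptual but computational: the remaining four coefficient checks, while of the same character, are rather lengthy and in practice are most cleanly handled with a computer algebra system. It is worth emphasizing that no use of the defining equation $F_3(x_{12})=0$ is needed for this simplification, since the relations (29), (30), (33), (34) already incorporate enough of the Einstein system to force $\lambda(g) = rr_1$ to be a rational function of $x_{12}$ alone.
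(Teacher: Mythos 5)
Your proposal is correct and follows the same route as the paper: starting from $\lambda(g) = rr_1 = \tfrac{1}{4}y_1/x_{12}^2$ (this is Proposition~3.8, not 3.11 as you wrote), you substitute expression (33) for $x_2$ into (29) for $y_1$, cancel the common $x_{12}^2$, and obtain $y_1$ as the rational function of $x_{12}$ whose numerator must factor as $\bigl(k(p-1)x_{12}^2+k_1\bigr)N(x_{12})$. Your intermediate formula $y_1 = \bigl(x_{12}^2 A^2(k(p-2)x_{12}^2+k_1)+kB^2\bigr)/\bigl(AB\,(k(p-1)+k_1)\bigr)$ matches the paper's stated expression, and your verification of the $x_{12}^8$ coefficient is correct; the remaining coefficient checks are indeed routine polynomial expansions.
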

 \begin{proof}  
From  Proposition 3.8 (19), we have $\displaystyle  \lambda(g) = rr_1 = \frac{1}{4}\frac{y_1}{{x_{12}}^2}$. 
From (30) and (34), we see 
 \begin{eqnarray*}
 && y_1 = \left(k (p-1) {x_{12}}^2+{k_{1}}\right) \big(
   \left(k^4 (p-1)^2 (p+2)-8 k^2+4\right){x_{12}}^6 \\
   && + 3 k^3 {k_{1}} (p-1) (p+1) {x_{12}}^4+3 k^2 {k_{1}}^2 p \,
   {x_{12}}^2 +k {k_{1}}^3\big)/ \\
   && \big( (k (p-1)+{k_{1}})
   \left(\left(k^2 (p-1)+2\right){x_{12}}^2 +k {k_{1}}\right)\times \\
   &&
   \left( \left(k^2 (p-1) p-2\right){x_{12}}^4+k {k_{1}} (2 p-1)
   {x_{12}}^2+{k_{1}}^2\right)\big) 
\end{eqnarray*} 
and thus we obtain our lemma. 
\end{proof}
Now we consider $\lambda(g)$ as a function of $x_{12}$. 
\begin{lemma}
For $p \geqq 3$, 
$\lambda(g)$ is strictly monotone decrease for $x_{12} > 0$.
\end{lemma}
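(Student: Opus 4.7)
The plan is to substitute $t = x_{12}^2 \in (0,\infty)$ and view $\lambda(g)$ as a rational function $\lambda(t) = N(t)/D(t)$, where
$$
N(t) = (k(p-1)t + k_1)\,P(t), \qquad D(t) = 4t\,(k(p-1)+k_1)\,Q(t)\,R(t),
$$
and $P(t), Q(t), R(t)$ are the cubic, linear and quadratic polynomials in $t$ obtained from the formula of the preceding lemma with $x_{12}^2$ replaced by $t$. Since $x_{12} \mapsto x_{12}^2$ is a smooth strictly increasing bijection of $(0,\infty)$, it suffices to show that $\lambda(t)$ is strictly monotone decreasing in $t$ under the hypothesis $p \geq 3$ (combined with the standing assumptions $k \geq 2$, $k_1 \geq 2$ from Theorem \ref{main}).

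A preliminary step is to verify that $P(t), Q(t), R(t)$ have strictly positive coefficients: the only nonobvious inequalities are $k^4(p-1)^2(p+2) - 8k^2 + 4 > 0$ and $k^2(p-1)p - 2 > 0$, both immediate from $k \geq 2$, $p \geq 3$. Hence $D(t) > 0$ on $(0,\infty)$, and the sign of $\lambda'(t)$ coincides with that of
$$
\Psi(t) := N'(t)D(t) - N(t)D'(t).
$$
Both $N$ and $D$ are polynomials of degree $4$ in $t$, so the leading $t^7$ terms of $N'D$ and $ND'$ cancel and $\Psi$ has degree at most $6$. I would then expand $\Psi(t) = \sum_{j=0}^{6} c_j(k,k_1,p)\,t^j$ and establish that each coefficient $c_j$ is non-positive for $k \geq 2, k_1 \geq 2, p \geq 3$, with at least one $c_j$ strictly negative on this parameter range. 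The non-positivity of each $c_j$ can be made transparent by the substitution $k = 2 + k'$, $k_1 = 2 + k_1'$, $p = 3 + p'$ with $k', k_1', p' \geq 0$ and regrouping monomials.

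The main obstacle is the bulk of this algebraic verification: $\Psi(t)$ is a polynomial of moderate degree in $t$ whose coefficients are themselves polynomials in three parameters, and the sign analysis of each $c_j$ is a tedious routine computation best handled with a symbolic algebra system. A conceptually cleaner attempt via the logarithmic decomposition
$$
\frac{\lambda'(t)}{\lambda(t)} = \left[\frac{k(p-1)}{k(p-1)t + k_1} - \frac{1}{t}\right] + \frac{d}{dt}\log\!\left(\frac{P(t)}{Q(t)R(t)}\right)
$$
fails on inspection: the first bracket equals $-k_1/[t(k(p-1)t+k_1)]$ and is manifestly negative, but one checks that $P(t)/(Q(t)R(t))$ increases from the value $1$ at $t = 0$ to a value strictly greater than $1$ as $t \to \infty$ (indeed the limit equals $1 + 2(k^2-1)[k^2(p-1)^2 - 4]/[(k^2(p-1)+2)(k^2(p-1)p-2)] > 1$ for $k \geq 2, p \geq 3$), so the second term has no definite sign and cannot by itself be shown non-positive; the direct coefficient-by-coefficient analysis of $\Psi$ therefore seems unavoidable.
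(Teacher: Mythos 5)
Your proposal follows essentially the same route as the paper. The paper computes $\dfrac{d\lambda(g)}{dx_{12}}$ directly and factors it as $-k_1\,Q(x_{12})$ divided by a manifestly positive denominator, where $Q$ is a degree-$6$ polynomial in $x_{12}^2$ (precisely the polynomial you call $\Psi(t)$, up to a positive factor and the harmless chain-rule factor $2x_{12}$ coming from your substitution $t = x_{12}^2$); it then displays $Q$ explicitly with the coefficients organized as polynomials in $(p-3)$ and $k$, making positivity of each coefficient (and hence strict negativity of $\lambda'$) visible by inspection, which is exactly your proposed reparametrization $p = 3 + p'$. In short, you have correctly identified the strategy and the key reduction; the only difference is that you defer the final expansion and sign check to a symbolic algebra system, whereas the paper records the expanded polynomial and its positivity term by term.
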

 \begin{proof}  
 We see that   
 \begin{eqnarray*} &&
 \frac{d \lambda(g)}{d x_{12}}  = -k_1 Q(x_{12})/ \Big(2(k (p-1)+{k_{1}}) {x_{12}}^3
  \left(\left(k^2 (p-1)+2\right){x_{12}}^2 +k {k_{1}}\right)^2\times
   \\&&
  \left(  \left(k^2 (p-1) p-2\right){x_{12}}^4+k {k_{1}} (2 p-1)
   {x_{12}}^2+{k_{1}}^2\right)^2\Big), 
 \end{eqnarray*} 
 where $ Q(x_{12})$ is given as follows:
 \begin{eqnarray*} &&
 Q(x_{12})= \big(k^8 (p-3)^6+2 \left(7 k^2+2\right) k^6 (p-3)^5+\left(83 k^2+46\right)
   k^6 (p-3)^4\\ &&
   +8 \left(33 k^4+28 k^2-3\right) k^4 (p-3)^3
   +8 \left(59 k^6+71 k^4-21 k^2-1\right) k^2 (p-3)^2
   \\ &&
   +32 \left(14 k^6+23 k^4-12 k^2-1\right) k^2 (p-3)+16 \left(11
   k^8+24 k^6-18 k^4-1\right)\big){x_{12}}^{12}
    \\ &&
  +  \big(3 k^6 (p-3)^4+\left(29 k^6+10 k^4\right)
   (p-3)^3+\left(108 k^6+72 k^4\right) (p-3)^2
   \\ &&
   +\left(180 k^6+192 k^4-36
   k^2\right) (p-3)+112 k^6+184 k^4-96 k^2-8
    \big){x_{12}}^{10}
    \\&&
  + \big( 15 k^6 (p-3)^4+\left(140 k^6+40 k^4\right)
   (p-3)^3+\left(498 k^6+276 k^4\right) (p-3)^2 
    \\ &&
     +\left(792 k^6+672
   k^4-72 k^2\right) (p-3)+472 k^6+568 k^4-168 k^2-8\big){x_{12}}^{8}
   \\&&
 +  \big(5 k^4 (p-3)^3+\left(35 k^4+10 k^2\right) (p-3)^2+\left(83
   k^4+46 k^2\right) (p-3)+66 k^4+56 k^2-6\big){x_{12}}^{6}
      \\&&
  +  \big(15 k^2 (p-3)^2+\left(70 k^2+20\right) (p-3)+83 k^2+46
    \big){x_{12}}^{4}
       \\&&
   + \big(3 k^2 (p-3)+7 k^2+2
    \big){x_{12}}^{2} +k^2 {k_1}^6  > 0. 
  \end{eqnarray*}
 \end{proof}
 From the above it follows that the  non naturally reductive Einstein metrics obtained in Section 4 are not isometric.

 \bigskip
 \noindent
{\bf Acknowledgment.} The second author was supported by JSPS KAKENHI Grant Number JP21K03224.

 \bigskip
 \noindent
{\bf Statements and Declarations}

 \medskip
 \noindent
{\bf Conflict of interest}  The authors have no relevant financial or non-financial interests to disclosure.  

 \medskip
 \noindent
{\bf Declaration of generative AI and AI-assisted technologies in the writing process} During the preparation of this work the authors did not use any
AI technologies.

  

\end{document}